\numberwithin{equation}{section}
\definecolor{bleu1}{RGB}{0,57,128}
\def\bleu1{\color{bleu1}}
\patchcmd{\section}{\normalfont}{\normalfont \bleu1}{}{}
\patchcmd{\subsection}{\normalfont}{\normalfont \bleu1}{}{}
\patchcmd{\subsubsection}{\normalfont}{\normalfont \bleu1}{}{}
\newtheorem{proposition}{Proposition}[section]
\newtheorem{theorem}{Theorem}[section]
\newtheorem{definition}{Definition}[section] 
\newtheorem{lemma}{Lemma}[section]
\newtheorem{remark}{Remark}[section]
\newtheorem{corollary}{Corollary}[section]
\newcommand{\Z}{{\mathbb Z}}
\newcommand{\C}{{\mathbb C}}
\newcommand{\R}{{\mathbb R}}
\newcommand{\Q}{{\mathbb Q}}
\newcommand{\T}{{\mathbb T}}
\newcommand{\id}{\operatorname{id}}
\tikzset{surface/.style={draw=blue!70!black, fill=blue!40!white, fill opacity=.6}}
\newcommand{\coneback}[4][]{
	\draw[canvas is xy plane at z=#2, #1] (45-#4:#3) arc (45-#4:225+#4:#3) -- (O) --cycle;
}
\newcommand{\conefront}[4][]{
	\draw[canvas is xy plane at z=#2, #1] (45-#4:#3) arc (45-#4:-135+#4:#3) -- (O) --cycle;
}
\begin{document}

\title[]{Isospectrum of non-self-adjoint  almost-periodic Schr\"odinger operators}

\author{Xueyin Wang}
\address{
Chern Institute of Mathematics and LPMC, Nankai University, Tianjin 300071, China
}

\email{xueyinwang@mail.nankai.edu.cn}

\author {Jiangong You}
\address{
Chern Institute of Mathematics and LPMC, Nankai University, Tianjin 300071, China} \email{jyou@nankai.edu.cn}

\author{Qi Zhou}
\address{
Chern Institute of Mathematics and LPMC, Nankai University, Tianjin 300071, China
}

\email{qizhou@nankai.edu.cn}

\begin{abstract}
For non-self-adjoint almost-periodic Schr\"odinger operators, a criterion is given to guarantee that they have both the same spectrum and same  Lyapunov exponents with the discrete free Laplacian. As a byproduct, we show that the Moser-P\"oschel argument for opening gaps may not be valid for non-self-adjoint operators. 
\end{abstract}
\maketitle

\section{Introduction}
Benefiting from methods of dynamical systems and harmonic analysis,  enormous breakthroughs have been made in recent years \cite{Av0,AD,Av1,AJ,AJM,AK06,AK15,AYZ1,AYZ2,BG,BJ,GS01,GS,GSV,Hou,J,JL2,S}
in the study of {\it self-adjoint}  almost-periodic Schr\"odinger operators on $\ell^2(\Z)$ (resp. $L^2(\R)$)
\begin{equation}\label{ham}
	H_{V}= \Delta + V(\cdot),
\end{equation}
where $V(\cdot)$ are almost-periodic on $\Z$ (resp. $\R$). However, little progress has been made for {\it non-self-adjoint} almost-periodic operators (non-Hermitian quasicrystals in physical literature), and even the fundamental spectral theorem has not been established so far (may not be possible). In comparison, non-Hermitian Hamiltonians received wide attention from physicists in recent years \cite{AGU, bender, Gong, Lee, LBH, MPS, XW}, because the recent experimental advances in controlling dissipation have brought about unprecedented flexibility in engineering non-Hermitian Hamiltonians in open classical and quantum systems \cite{Gong}; non-Hermitian Hamiltonians exhibit rich phenomena without Hermitian counterparts, e.g. $\mathcal{PT}$ (parity-time) symmetry breaking, topological phase transition, non-Hermitian skin effects \cite{AGU, BBK,jiang2019interplay, LZC, Longhi}.  Of course, these observations and predictions in physical literature deserve rigorous mathematical proofs.

There are also other motivations for the mathematical study of the non-self-adjoint almost-periodic operators. Firstly, 
a striking result worth highlighting is Avila's global theory of the one-frequency quasi-periodic Schr\"odinger operators \cite{Av0}. However, if one wants to establish the quantitative global theory \cite{GJYZ}, the core is to study   
\begin{equation*}
	(H\psi)_{n}=\psi_{n+1}+\psi_{n-1}+v(x+n\alpha+ \mathrm{i}\epsilon )\psi_{n},
\end{equation*}
which is a family of non-self-adjoint operators.  Secondly, the spectrum of non-self-adjoint  Schr\"odinger  operators has deep connection with problems of the elliptic operators, such as ground states, steady states and averaging theory \cite{Kozlov1984,LWZ}. 

\subsection{Isospectrum}
In this paper, we study the spectrum of the following  non-self-adjoint almost-periodic Schr\"odinger operator  on $\ell^2(\Z)$:
\begin{equation}\label{apt}
	(H_{\lambda v,\alpha,x}\psi)_{n}=\psi_{n+1}+\psi_{n-1}+ \lambda v(x+n\alpha)\psi_{n},
\end{equation}
where $\lambda\in \mathbb{R}$ is the coupling constant, $x \in \mathbb{T}^{d}=(\mathbb{R}/2\pi\mathbb{Z})^{d}$ is called the phase with $d\in \mathbb{N}^{+}\cup\{\infty\}$,  $v:\mathbb{T}^{d}\rightarrow \mathbb{C}$ is the potential, $\alpha\in \mathbb{T}^{d}$ is the frequency satisfying that $(1,\alpha)$ is independent among $\mathbb{Q}$. In this case, the spectrum of $H_{\lambda v,\alpha,x}$  is independent of $x$ \cite{GJYZ,John}, and thus we denote it by  $\Sigma_{\lambda v,\alpha}$. 

$\mathcal{PT}$ symmetric operators constitute an important  class of non-self-adjoint operators coming from  quantum mechanics \cite{KYZ}. Recall that \eqref{ham} is $\mathcal{PT}$ symmetric, if $\overline{V}(n)=V(-n)$ \cite{bender}. In the almost-periodic setting where the potential $v:\mathbb{T}^{d}\rightarrow \mathbb{C}$, one can extend the definition to $\overline{v}(x)=v(-x)$, since the spectrum  $\Sigma_{v,\alpha}$  is independent of $x$.

It was first observed by Bender and Boettcher \cite{bender} that a large class of $\mathcal{PT}$ symmetric operators have real spectrum. This observation has a profound significance in that it not only suggests a possibility of $\mathcal{PT}$ symmetric modification of the conventional quantum mechanics that considers observables as self-adjoint operators \cite{KYZ}, but also goes far beyond quantum mechanics and has spread to many branches of physics \cite{RDM}. Thus, a basic mathematical question is to ask which class of  $\mathcal{PT}$ symmetric operators have real spectrum.

As a warming up example, let us first look at the heuristic example
\begin{equation}\label{s1}
	(H_{\lambda\exp,\alpha}\psi)_{n}=\psi_{n+1}+\psi_{n-1}+\lambda{\rm e}^{{\rm i}(x+n\alpha)} \psi_{n},
\end{equation}
proposed by Sarnak \cite{Sarnak}, whose spectrum has already been completely known.

\begin{theorem}[\cite{boca,BF,Sarnak}] \label{sarexample}
	For any $\alpha\in \mathbb{R}\backslash\mathbb{Q}$, we have the following:
	\begin{enumerate}[font=\normalfont, label={(\arabic*)}]
		\item \label{item:s1}If $|\lambda|\leq 1$,  then the spectrum of \eqref{s1} is a real interval: 
		$$\Sigma_{\lambda \exp,\alpha}=[-2,2].$$
		\item \label{item:s2}If $|\lambda| > 1$, denote $\xi=\ln |\lambda|$, then the spectrum  of \eqref{s1} is an  ellipse given by
		\begin{equation*}
			\Sigma_{\lambda \exp,\alpha}=\bigg\{ E\in \C: \ \	\Big(\frac{\mathrm{Re}E}{\cosh \xi} \Big)^{2} + \Big(\frac{\mathrm{Im}E}{\sinh \xi} \Big)^{2} =4\bigg\}.
		\end{equation*}
	\end{enumerate} 
\end{theorem}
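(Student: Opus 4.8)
The plan is to reduce everything to the transfer cocycle and Avila's acceleration. Writing $H_{\lambda\exp,\alpha,x}\psi=E\psi$ in transfer-matrix form one gets $\binom{\psi_{n+1}}{\psi_{n}}=A^{E}(x+n\alpha)\binom{\psi_{n}}{\psi_{n-1}}$ with
\[
A^{E}(\theta)=\begin{pmatrix}E-\lambda e^{\mathrm{i}\theta}&-1\\ 1&0\end{pmatrix}\in SL(2,\C).
\]
The first ingredient is the Johnson-type dichotomy, which survives in the non-self-adjoint setting: $E\notin\Sigma_{\lambda\exp,\alpha}$ if and only if $(\alpha,A^{E})$ is uniformly hyperbolic. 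Indeed, if $E\notin\Sigma_{\lambda\exp,\alpha}$ then a Combes--Thomas estimate (which uses only the finite range of $H$ and the lower semicontinuity of the resolvent set under norm perturbations, not self-adjointness) gives exponential off-diagonal decay of $(H-E)^{-1}$; the columns of the Green function together with a Wronskian computation then produce two linearly independent Weyl solutions decaying at $+\infty$ and at $-\infty$, i.e.\ a uniform exponential dichotomy of $(\alpha,A^{E})$ (uniform in $x$ because $\Sigma_{\lambda\exp,\alpha}$, hence $\|(H_{\lambda\exp,\alpha,x}-E)^{-1}\|$, is $x$-independent). Conversely, uniform hyperbolicity yields $(H-E)^{-1}$ built from the stable/unstable projections.

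Next I would compute the complexified Lyapunov exponent $L(\epsilon):=L\bigl(\alpha,A^{E}(\cdot+\mathrm{i}\epsilon)\bigr)$. The crucial self-similarity is that $A^{E}(\theta+\mathrm{i}\epsilon)$ is again a Sarnak cocycle, with coupling $\lambda$ replaced by $\lambda e^{-\epsilon}$, namely $A^{E}(\theta+\mathrm{i}\epsilon)=\left(\begin{smallmatrix}E-(\lambda e^{-\epsilon})e^{\mathrm{i}\theta}&-1\\ 1&0\end{smallmatrix}\right)$. By Avila's theorem $\epsilon\mapsto L(\epsilon)$ is convex and piecewise affine with integer slopes. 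As $\epsilon\to+\infty$ the cocycle converges uniformly to the constant $\left(\begin{smallmatrix}E&-1\\1&0\end{smallmatrix}\right)$, whose top Lyapunov exponent is $\log|z_{+}(E)|$, where $z_{\pm}(E)$ are the roots of $z^{2}-Ez+1=0$ labelled so that $|z_{-}|\le1\le|z_{+}|$; hence the slope tends to $0$. As $\epsilon\to-\infty$ the entry $-(\lambda e^{-\epsilon})e^{\mathrm{i}\theta}$ dominates and a continuant estimate gives $L(\epsilon)=-\epsilon+\log|\lambda|+o(1)$, so the slope tends to $-1$. Since the slopes are non-decreasing integers running from $-1$ to $0$, there is exactly one corner, and
\[
L(\epsilon)=\max\bigl\{\,\log|\lambda|-\epsilon,\ \log|z_{+}(E)|\,\bigr\},\qquad \epsilon_{*}(E):=\log|\lambda|-\log|z_{+}(E)|.
\]
This identification is the main technical obstacle I would expect: the asymptotic slopes are elementary, but ruling out intermediate affine pieces relies entirely on Avila's slope quantization, and one must also justify the two limiting intercepts.

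Now I would invoke the uniform-hyperbolicity criterion for complex one-frequency cocycles of Avila--Jitomirskaya--Sadel: $(\alpha,A^{E})$ is uniformly hyperbolic if and only if $L(0)>0$ and $L$ is affine near $\epsilon=0$, i.e.\ if and only if $\max\{\log|\lambda|,\log|z_{+}(E)|\}>0$ and $\epsilon_{*}(E)\ne0$. Combining with the first step,
\[
E\in\Sigma_{\lambda\exp,\alpha}\iff |z_{+}(E)|=|\lambda|\quad\text{or}\quad\bigl(|\lambda|\le1\ \text{and}\ |z_{+}(E)|\le1\bigr).
\]

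Finally I would unpack this. Since $|z_{+}(E)|\ge1$ for every $E$, with equality precisely on $[-2,2]$, the second clause is $\{|\lambda|\le1\}\cap[-2,2]$, and when $|\lambda|\le1$ it already contains the first clause; hence $\Sigma_{\lambda\exp,\alpha}=[-2,2]$, which is \ref{item:s1}. When $|\lambda|>1$ the second clause is empty, so $\Sigma_{\lambda\exp,\alpha}=\{E:|z_{+}(E)|=|\lambda|\}$; writing $z_{+}=e^{\xi+\mathrm{i}\phi}$ with $\xi=\ln|\lambda|>0$ (so $|z_{-}|=e^{-\xi}<|z_{+}|$, consistent with the labelling) gives $E=z_{+}+z_{+}^{-1}=2\cosh\xi\cos\phi+2\mathrm{i}\sinh\xi\sin\phi$, i.e.\ the ellipse of \ref{item:s2}, which is disjoint from $[-2,2]$ because $\cosh\xi>1$. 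This last step — turning $\{|z_{+}(E)|=|\lambda|\}$ into the stated ellipse and checking disjointness from $[-2,2]$ — is purely computational, as is the continuant estimate in the second paragraph; the genuine input beyond Avila's convexity/quantization theorem and the quoted hyperbolicity criterion is only the self-adjointness-free Combes--Thomas argument of the first paragraph.
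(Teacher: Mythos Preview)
Your proposal is correct and follows essentially the same route as the paper's proof in Section~6. Both arguments complexify the phase, use the asymptotics of $A^{E}(\cdot+\mathrm{i}\epsilon)$ as $\epsilon\to\pm\infty$ together with Avila's quantization of acceleration to obtain $L(\epsilon)=\max\{\log|\lambda|-\epsilon,\ \log|z_{+}(E)|\}$, and then combine the Johnson-type characterization $E\notin\Sigma\iff(\alpha,A^{E})\in\mathcal{UH}$ (the paper's Proposition~\ref{equi}) with Avila's criterion $\mathcal{UH}\iff L(0)>0$ and $L$ affine at $0$ (Proposition~\ref{uh}); the only cosmetic difference is that the paper cites \cite{GJYZ} for the non-self-adjoint Johnson theorem while you sketch a Combes--Thomas argument, and the paper attributes the $\mathcal{UH}$ criterion to Avila alone rather than Avila--Jitomirskaya--Sadel.
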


Theorem \ref{sarexample} was proved by Sarnak \cite{Sarnak} in the case $|\lambda|\neq 1$ and $\alpha$ is Diophantine.  It was generalized to all $\lambda\in \mathbb{R}$ and $\alpha\in \mathbb{R}\backslash\mathbb{Q}$ by Boca \cite{boca} and Borisov-Fedotov \cite{BF} (the proof didn't appear yet) independently. In this paper,  we will give a simple proof of Theorem \ref{sarexample} by Avila's global theory of one-frequency analytic $\mathrm{SL}(2,\mathbb{C})$ cocycles \cite{Av0}.

The phenomenon of being isospectral to the free Laplacian $H_{0}$, described by Theorem \ref{sarexample}\ref{item:s1},  is of particular interest. It has roots in the study of the non-self-adjoint differential operator with periodic potential, where the study is relatively complete now \cite{K1, T,V}.
The famous Borg's uniqueness theorem \cite{Bo}  for the Hill operator
\begin{equation*}
	(H_{v}\psi)(t)=-\psi''(t)+ v(t)\psi(t),
\end{equation*}
states that if $v\in L_{loc}^2(\R)$  is real-valued, then  $\Sigma_{v}=[0,\infty )$ if and only if  $v\equiv 0$ a.e.. However, in the case of a complex-valued periodic potential $v$, the situation is very different. As it was proved by Gasymov \cite{Gas} (see also \cite{GU})  that if the Hill operator satisfies
\begin{equation}\label{gas}
	v(t)=\sum_{\mathbf{k}=1}^{\infty} \widehat{v}_{\mathbf{k}}{\rm e}^{{\rm i}\mathbf{k}t} \quad  \text{with}  \quad \sum_{\mathbf{k}=1}^{\infty} |\widehat{v}_{\mathbf{k}}|<\infty,
\end{equation} 
then $\Sigma_{v}=[0,\infty )$ or say $H_{v}$ is isospectral to $H_{0}$. However, it is still open whether, under some smoothness requirements, any operator with periodic potential $v(x)$ isospectral to $H_{0}$ must be a ``Gasymov potential'', i.e. of the form given by \eqref{gas}, or the complex conjugate of a Gasymov potential \cite{P}.

In the non-periodic setting,  Killip-Simon \cite{KS} largely extended Borg's uniqueness theorem \cite{Bo}, and proved that for self-adjoint discrete Schr\"odinger operator \eqref{ham} with 
$V:\mathbb{Z}\rightarrow \mathbb{R}$, $H_{V}$ is isospectral to $H_{0}$ if and only if  $V\equiv 0$. Theorem \ref{sarexample} shows that being isospectral to $H_{0}$ does not imply $v\equiv 0$ for complex  quasi-periodic potential \eqref{apt}.

The main ambition of this paper is to explore the structure of complex potential (not necessary to be $\mathcal{PT}$ symmetric), and to give a criterion to ensure the corresponding operators  \eqref{apt} are  isospectral to $H_{0}$. Before stating the results, we first introduce some notations. For any ${\bf k}\in \mathbb{Z}^{d}$, we define $|{\bf k}|_{\eta}  = \sum_{j\in\mathbb{N}} \langle j\rangle^{\eta} |{\bf k}_{j}|$, where $\langle j\rangle:=\max\{1,j\}$ and $\eta>0$ is a fixed constant. Let $\mathbb{Z}^{d}_{*}$ be the set of integer vectors with finite support $
\mathbb{Z}^{d}_{*} = \{{\bf k}: 0<|{\bf k}|_{\eta}  <\infty\}$. Clearly if $d\in \mathbb{N}^{+}$, then $\mathbb{Z}^{d}_{*}=\mathbb{Z}^{d}\backslash\{0\}$. Let $\mathbb{T}^{d}_{h}$ be the complexified torus defined by
\begin{equation*}
	\mathbb{T}_{h}^{d} := \Big\{x\in \mathbb{C}^{d}:{\rm Re} x_{j}\in \mathbb{T}, |{\rm Im} x_{j}|<h \langle j\rangle^{\eta} \Big\},
\end{equation*}
and denote by $C^{\omega}(\mathbb{T}^{d}_{h},\mathbb{C})$ the space  of bounded analytic complex-valued functions equipped with norm $\|v\|_{h} = \sum_{{\bf k}} |\widehat{v}_{\bf k}|{\rm e }^{h |{\bf k}|_{\eta}}$. 

Let $d\in \mathbb{N}^{+} \cup \{\infty\}$.  We assume that the frequency $\alpha=(\alpha_j)$ belongs to  the $d$ dimensional cube $\mathcal{R}_0:= [1,2]^{d}$, which is endowed with the probability measure $\mathcal{P}$
induced by the product measure of the $d$ dimensional cube $\mathcal{R}_0$.
The following almost-periodic Diophantine frequencies  were first defined by Bourgain \cite{Bourgain2005}:
\begin{definition}[\cite{Bourgain2005}]
	Given $\gamma\in (0,1),\tau>1$, we denote by ${\rm DC}_{\gamma,\tau}^{d}$ the set of Diophantine frequencies
	\begin{equation}\label{Almost}
		\inf\limits_{n\in\Z}|\langle \mathbf{k},\alpha\rangle-n|\geq \gamma\prod\limits_{j\in\mathbb N}\frac{1}{1+\langle j\rangle^\tau|k_j|^\tau},\quad \forall \ \mathbf{k} \in\mathbb{Z}^{d}_{*}, 
	\end{equation}
	and denote ${\rm DC}^{d} = \cup_{\gamma>0}{\rm DC}_{\gamma,\tau}^{d}$.
\end{definition}
As proved in \cite{Bourgain2005,Biasco2019}, for any $\tau>1$, Diophantine frequencies  ${\rm DC}_{\gamma,\tau}^{d}$ are typical in the set $\mathcal{R}_0$ in the sense that there exists a positive constant $C(\tau)$ such that
\begin{equation*}
	\mathcal{P}( \mathcal{R}_0 \backslash {\rm DC}_{\gamma,\tau}^{d}) \leq C(\tau) \gamma.
\end{equation*} 

We also denote 
\begin{equation}\label{cone-r}
	\Gamma_{r} = \mathbb{Z}^{d}_{*} \cap \bigg\{\mathbf{k}: \sum_{j} \langle j\rangle^{\eta}{\bf k}_{j} w_{j} \geq  r|{\bf k}|_{\eta}, \quad \text{with}\quad \sum_{j}w_{j}=1, w_j>0  \bigg\},
\end{equation}
where $\Gamma_{r}$  is an integer cone whose angle is less than $\pi$ strictly, as shown in  Figure \ref{acone}. Once we have these, now we are ready to  state our main theorem.

\begin{figure}[htbp]
	\centering
	\begin{tikzpicture}[tdplot_main_coords]	
		\coordinate (O) at (0,0,0);
		\draw (0,0,-0.5) -- (O);
		\foreach \x in {-4,-3,...,4}
		\foreach \y in {-4,-3,...,4}
		{
			\draw[gray,very thin]  (\x,-4) -- (\x,4);
			\draw[gray,very thin]  (-4,\y) -- (4,\y);
		}
		\draw[->] (-6,0,0) -- (6,0,0) node[right]{};
		\draw[->] (0,6,0) -- (0,-6,0) node[right]{};
		\coneback[surface]{2}{3}{25}
		\draw[->] (O) -- (0,0,3) node[above]{};
		\conefront[surface]{2}{3}{25}
		\draw[->] (2.5,2.5,2)node[right]{$\Gamma_{r}$} -- (2.2,2.2,2)node{};
	\end{tikzpicture}
	\caption{Integer cone}
	\label{acone}
\end{figure}
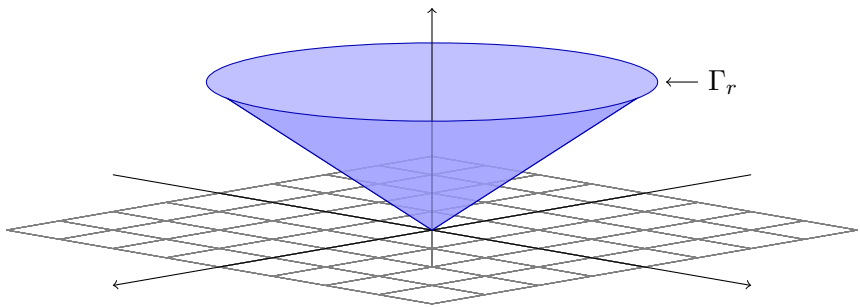

\begin{theorem}\label{maintheorem1}
	Let $d\in \mathbb{N}^{+} \cup \{\infty\}$, $\eta>0$, $h>0$,  $r\in (0,1]$,  $\alpha\in {\rm DC}^{d}$. Suppose that $$v(x)= \sum_{{\bf k} \in \Gamma_{r}}\widehat{v}_{\bf k}\mathrm{e}^{{\rm i} \langle \mathbf{k},x\rangle} \in C^{\omega}(\mathbb{T}^{d}_{h},\mathbb{C}).$$
	Then there exists $\lambda_0=\lambda_{0}(\eta,h,r,\alpha,\|v\|_{h})$ such that  $\Sigma_{\lambda v,\alpha}=[-2,2]$ if $ |\lambda|<\lambda_0$. 
\end{theorem}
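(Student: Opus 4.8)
The plan is to reduce the spectral problem to a statement about the transfer-matrix cocycle and then exploit analyticity of the Lyapunov exponent together with a conjugacy to a constant (rotation) cocycle. Recall that $E \notin \Sigma_{\lambda v,\alpha}$ if and only if the cocycle $(\alpha, A_{E})$ with
$$
A_{E}(x) = \begin{pmatrix} E - \lambda v(x) & -1 \\ 1 & 0 \end{pmatrix}
$$
is uniformly hyperbolic, and recall Avila's global theory: for one-frequency analytic $\mathrm{SL}(2,\C)$ cocycles, the Lyapunov exponent $L(\epsilon)$ of the complexified cocycle $x \mapsto A_{E}(x + \mathrm{i}\epsilon)$ is a convex, piecewise-linear function of $\epsilon$ with integer slopes, and $E$ lies in the spectrum precisely when the cocycle is not uniformly hyperbolic, which (for $E$ real, $L(0)=0$) happens when the base point $\epsilon = 0$ is not in the interior of a flat piece or, more precisely, is a subcritical/critical phenomenon. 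The key structural input is that because $\widehat v_{\mathbf k}$ is supported in the cone $\Gamma_r$, pushing $x \to x + \mathrm{i} t w$ for the weight $w=(w_j)$ appearing in \eqref{cone-r} and letting $t \to +\infty$ makes $\lambda v(x + \mathrm{i} t w) \to 0$ \emph{exponentially}, uniformly in $\mathrm{Re}\, x$; hence in that limit the cocycle degenerates to the constant free cocycle $A_E^{0} = \begin{pmatrix} E & -1 \\ 1 & 0\end{pmatrix}$, whose Lyapunov exponent is $0$ for $E \in [-2,2]$ and positive outside.

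First I would make this quantitative: choose the complexification direction along $w$, set $v_t(x) := \lambda v(x + \mathrm{i} t w)$ and observe $\|v_t\|_{h'} \le |\lambda|\, \|v\|_h\, e^{-(h-h')t \cdot(\text{const})} $ for suitable $h' < h$, using that every $\mathbf k \in \Gamma_r$ satisfies $\sum_j \langle j\rangle^{\eta} \mathbf k_j w_j \ge r|\mathbf k|_\eta > 0$, so the Fourier series converges and decays in the $+w$ imaginary direction. Thus for $t$ large the complexified cocycle is a small analytic perturbation of the constant cocycle $A_E^0$. For $E$ in a compact neighborhood of $[-2,2]$, I would then run an almost-reducibility / KAM scheme — this is where the Diophantine condition $\alpha \in \mathrm{DC}^d$ enters, in the Bourgain infinite-dimensional form \eqref{Almost} — to conjugate $(\alpha, A_E(\cdot + \mathrm{i} t w))$ to a cocycle that is exponentially close to a constant elliptic cocycle, provided $|\lambda|$ is smaller than a threshold $\lambda_0(\eta,h,r,\alpha,\|v\|_h)$ depending on the KAM smallness requirement. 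Reducibility to an elliptic constant forces $L(t) = 0$ at that complexified height for all such $E$, and in fact (by the same perturbative argument applied on an interval of heights) $L$ vanishes on a whole interval of $\epsilon$ containing $t$.

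Next, by Avila's convexity and quantization of the acceleration, $L(\epsilon)$ is convex and piecewise linear with integer slopes; it is nonnegative, it vanishes on an interval, and (crucially) the total "degree" is pinned down by the cone support — since $v$ has only positive-frequency harmonics relative to $w$, the cocycle $A_E(x+\mathrm{i}\epsilon)$ as $\epsilon \to -\infty$ behaves like $e^{-\mathrm{i} \langle \mathbf k_{\min},\cdot\rangle}$ times a bounded factor, giving a definite slope at $-\infty$, while at $+\infty$ the slope is that of the free cocycle. Matching the slopes at $\pm\infty$ with convexity and the flat piece at height $t$ forces $L(0) = 0$ and forces $\epsilon = 0$ to lie in the closure of the flat region; combined with the fact that for real $E \in [-2,2]$ the cocycle is bounded (not uniformly hyperbolic) this yields $[-2,2] \subseteq \Sigma_{\lambda v,\alpha}$. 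For the reverse inclusion $\Sigma_{\lambda v,\alpha} \subseteq [-2,2]$, I would take $E$ real with $|E| > 2$ (or $E$ with nonzero imaginary part) and show $L(0) > 0$ with the cocycle uniformly hyperbolic: again conjugate near infinity to a perturbation of $A_E^0$ which is hyperbolic for $|E|>2$, propagate hyperbolicity down to $\epsilon = 0$ using that $L$ is affine with nonzero slope on the relevant range (no flat piece can appear because the free cocycle at the two ends already has the maximal/minimal slopes consistent with convexity), and conclude $E \notin \Sigma$. Uniform hyperbolicity at $\epsilon=0$ is an open condition and is exactly complement-of-spectrum, so $\Sigma_{\lambda v,\alpha} = [-2,2]$.

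The main obstacle is the KAM/almost-reducibility step in the infinite-dimensional ($d = \infty$) setting: one must run Bourgain-type almost-periodic KAM with the weighted norms $\|\cdot\|_h$ and the weighted Diophantine condition \eqref{Almost}, controlling the loss of analyticity radius at each step so that the limit still lives on a nontrivial strip, and — more delicate — one must verify that the cone structure $\Gamma_r$ is \emph{preserved}, or at least that the relevant one-sided decay in the $+w$ direction survives each KAM iteration, so that the "send $t \to \infty$ to reach the constant cocycle" mechanism remains available throughout. Making $\lambda_0$ genuinely depend only on $(\eta,h,r,\alpha,\|v\|_h)$ and not on $E$ requires the smallness condition to be uniform over the compact $E$-neighborhood of $[-2,2]$, which is routine once the scheme is set up but needs the estimates to be tracked explicitly.
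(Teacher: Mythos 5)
Your proposal has a genuine structural gap: you build the argument around Avila's quantization of acceleration (integer slopes of $\epsilon \mapsto L(\epsilon)$ and the ensuing piecewise-linearity/degree-counting), but that theorem is a \emph{one-frequency} result -- it applies only to $(\alpha,A)\in (\R\backslash\Q)\times C^\omega(\T,\mathrm{SL}(2,\C))$, whereas Theorem \ref{maintheorem1} is stated for $d\in\mathbb N^+\cup\{\infty\}$. For $d\ge 2$, and a fortiori for $d=\infty$, there is no quantization of the directional acceleration $\lim_{s\to 0^+}\frac{L(\epsilon+s)-L(\epsilon)}{s}$ when complexifying along a fixed weight direction $w$, so the ``match slopes at $\pm\infty$, count degree, and squeeze a flat piece through $\epsilon=0$'' mechanism simply is not available. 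A second problem is the $\epsilon\to -\infty$ endpoint: the potential is assumed analytic only on a strip $\mathbb T^d_h$, and in the $-w$ direction the cone structure makes $v(x+\mathrm i\epsilon w)$ blow up, so there is no definite slope to read off at $-\infty$ unless $v$ is a trigonometric polynomial; for a general cone-supported $v$ with infinitely many harmonics this end of the picture is not even well-defined. You do flag, as the ``main obstacle,'' that the cone structure must be preserved under the KAM scheme, but you leave that entirely open -- and that preservation lemma (the paper's Lemma \ref{keep}, which shows the rotation conjugacy $Q_{\mathbf k^*}$ maps $\Gamma_r$ into $\Gamma_{r_+}$ at the cost of a controlled shrink of $r$) is in fact the crux of the argument, not a routine verification.

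The paper's mechanism is also different in spirit: it never needs the $t\to+\infty$ degeneration or any global-theory degree counting. It runs KAM directly at $\epsilon=0$ on the cocycle $(\alpha, A_E e^{F_v})$ with $F_v\in\mathcal B_{h,r}$, and the cone structure ensures that in each resonant step there is a \emph{single} resonance (only one of $\pm\mathbf k^*$), rather than the double resonance that in the self-adjoint case forces uniform hyperbolicity and gap opening. This single-resonance phenomenon, combined with the fact that every conjugacy used (the nonresonant $e^Y$, the diagonalization $P$, the rotation $Q_{\mathbf k^*}$) keeps the perturbation inside a cone space $\mathcal B_{h_n,r_n}$, is what forces the surviving constant matrix $A_n$ to keep $\mathrm{Im}\,\xi_n$ fixed at $0$ for $E\in[-2,2]$ (and fixed at $\mathrm{Im}\,\xi\neq 0$ for $E\notin[-2,2]$), hence $L(E)=0$ resp.\ $L(E)=\log|\frac{E}{2}+\frac{\sqrt{E^2-4}}{2}|$. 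To salvage your approach you would need either to restrict to $d=1$ and to trigonometric-polynomial $v$ (so the $\pm\infty$ slopes exist and quantization applies), or to replace the global-theory step by exactly the kind of cone-preserving almost-reducibility that the paper develops -- at which point the complexification detour is superfluous.
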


\begin{remark}
The smallness condition of the coupling constant $|\lambda|$ is necessary due to Theorem \ref{sarexample}.

\end{remark}

\begin{remark}
	If  $d<\infty$, the assumption \eqref{Almost} can be replaced by the  standard  Diophantine condition
	\begin{equation*}
		{\rm DC}^d_{\gamma,\tau'}:=\Big\{\alpha \in\R^d:  \inf_{n \in \Z}\left| \langle \mathbf{k},\alpha \rangle - n \right|
		> \frac{\gamma}{|\mathbf{k}|^{\tau'}},\quad \forall\  \mathbf{k}\in\Z^d\backslash\{0\} \Big\}.
	\end{equation*}	
\end{remark}

\begin{remark}
If $\widehat{v}_{\bf k}\in \R$ for any $\mathbf{k}\in \Gamma_{r}$, then the potential $v$ is $\mathcal{PT}$ symmetric. However, the key assumption for us is the cone structure $\Gamma_{r}$, whether $\widehat{v}_{\bf k}$ is real or not is not important.
\end{remark}

Note that Sarnak \cite{Sarnak} also extended his result to multi-frequency case: for any  Diophantine frequency $\alpha$,  he constructed one $\mathcal{PT}$ symmetric $v$, and showed that $H_{\lambda v,\alpha,x}$ is isospectral to the discrete free Laplacian if $|\lambda|$ is small enough\footnote{While Sarnak stated the result in the continuous setting, the method can clearly be carried out in the discrete case, as point out by him at the end of Section 2 \cite{Sarnak}.}. Our result not only generalizes Sarnak's result \cite{Sarnak} to the almost-periodic case, but also (more importantly) finds that the cone structure $\Gamma_{r}$ in \eqref{cone-r} for the Fourier coefficients of  $v$ is a sufficient (almost optimal) condition to ensure that $H_{\lambda v,\alpha,x}$ is isospectral to the discrete free Laplacian.

To understand Theorem \ref{maintheorem1} more clearly, we  look at the case $d=1$, where $\Gamma_{r}=\mathbb{Z}^+$, consequently we have the following:
\begin{corollary}
	Suppose that $h>0$, $\alpha\in {\rm DC}^{1}$, $v(x)= \sum_{\mathbf{k}>0} \widehat{v}_{ \mathbf{k}}\mathrm{e}^{{\rm i}  \mathbf{k}x}.$  If
	$|\lambda|<\lambda_1(h,\alpha,\|v\|_{h})$ which is small enough, then $\Sigma_{\lambda v,\alpha}=[-2,2]$.
\end{corollary}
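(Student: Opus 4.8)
The corollary is the specialization of Theorem \ref{maintheorem1} to $d=1$: when $d=1$ the only admissible weight in \eqref{cone-r} is $w_1=1$, and since $\langle 1\rangle=1$ and $|\mathbf k|_\eta=|k_1|$, the defining inequality becomes $k_1\ge r|k_1|$, which for $r\in(0,1]$ holds exactly when $k_1>0$; hence $\Gamma_r=\Z^{+}$, the potential $v(x)=\sum_{\mathbf k>0}\widehat v_{\mathbf k}\mathrm e^{\mathrm i\mathbf k x}$ is admissible, and one may take $\lambda_1(h,\alpha,\|v\|_h):=\lambda_0(\eta,h,1,\alpha,\|v\|_h)$. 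Since the substance is in Theorem \ref{maintheorem1}, I outline the strategy I would follow to prove it, here in the one-dimensional situation. The plan is to pass to the $\mathrm{SL}(2,\C)$ transfer cocycle $(\alpha,A_E)$ on $\T\times\C^{2}$ attached to $H_{\lambda v,\alpha,x}\psi=E\psi$, namely
\[
A_E(x)=\begin{pmatrix} E-\lambda v(x) & -1\\[1mm] 1 & 0\end{pmatrix},
\]
and to use the Johnson-type dynamical description of the spectrum (cf. \cite{GJYZ,John}): $E\in\Sigma_{\lambda v,\alpha}$ if and only if $(\alpha,A_E)$ is not uniformly hyperbolic. The free cocycle $A^{(0)}_E$ (the case $\lambda=0$) is a constant matrix, elliptic (conjugate to a rotation $R_{\rho_0}$, $E=2\cos 2\pi\rho_0$) for $E\in(-2,2)$ and hyperbolic, hence uniformly hyperbolic, for $E\in\C\setminus[-2,2]$. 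So two things must be shown: (i) for $E\in(-2,2)$ the perturbed cocycle $(\alpha,A_E)$ is not uniformly hyperbolic; (ii) for $E\notin[-2,2]$ it is.

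For (i) I would run an almost-reducibility (KAM) scheme: conjugate the constant part to $R_{\rho_0}$, after which the perturbation $\lambda F(x)$ still has all Fourier modes in $\Z^{+}$, and at each step the homological equation is solvable except at resonant modes $k>0$ with $\langle k,\alpha\rangle\pm 2\rho_0\in\Z$. The decisive use of the one-sided cone structure is that the Fourier support, lying on a half-line, contains no antipodal pair $\{k,-k\}$; therefore a resonance in the $(1,2)$-entry ($\langle k,\alpha\rangle+2\rho_0\in\Z$) and a resonance in the $(2,1)$-entry ($\langle k',\alpha\rangle-2\rho_0\in\Z$) can never occur simultaneously, since adding the two would force $\langle k+k',\alpha\rangle\in\Z$, i.e. $k+k'=0$ by rational independence of $(1,\alpha)$, impossible for $k,k'>0$. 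Thus only single, one-sided resonances arise; each is removed by a rotation-number-shifting degree-$k$ conjugation after which the scheme continues, and — crucially — no hyperbolic block is ever created, only parabolic corrections (this is precisely the breakdown of the Moser–P\"oschel gap-opening mechanism). The scheme then converges, showing $(\alpha,A_E)$ is conjugate to a bounded cocycle for every $E\in(-2,2)$; hence $(-2,2)\subseteq\Sigma_{\lambda v,\alpha}$, and $[-2,2]\subseteq\Sigma_{\lambda v,\alpha}$ by closedness.

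For (ii) I would conjugate $(\alpha,A_E)$ directly to $(\alpha,A^{(0)}_E)$ by a power series $B_E=\id+\lambda B_E^{(1)}+\cdots$. Since $A^{(0)}_E$ is hyperbolic for $E\notin[-2,2]$, the adjoint action $\mathrm{Ad}_{A^{(0)}_E}$ has eigenvalues $\{1,1,\mu^{\pm 2}\}$ with $|\mu|\ne 1$, so the homological equations carry no resonances at all: the only small divisors are $\mathrm e^{2\pi\mathrm i\langle k,\alpha\rangle}-1$ (controlled by $\alpha\in{\rm DC}^{1}$) and the adjoint divisors $\mathrm e^{2\pi\mathrm i\langle k,\alpha\rangle}-\mu^{\pm 2}$, which satisfy $|\mathrm e^{2\pi\mathrm i\langle k,\alpha\rangle}-\mu^{\pm 2}|\gtrsim|\sin\pi\langle k,\alpha\rangle|\gtrsim\gamma/|k|^{\tau}$ uniformly for real $E\notin(-2,2)$ — in particular up to the edges $E=\pm2$, because there $\mu^{\pm 2}$ stays on the positive real axis and hence "radially away" from the unit circle. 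Thus $B_E$ converges uniformly for real $E\notin(-2,2)$, giving uniform hyperbolicity of $(\alpha,A_E)$ there; combined with the analyticity in $E$ of the reduction from step (i), which forces the reduced normal form to be hyperbolic for $E$ in a complex neighbourhood of $(-2,2)$ off the real axis, this yields $\Sigma_{\lambda v,\alpha}\subseteq[-2,2]$, and together with (i), $\Sigma_{\lambda v,\alpha}=[-2,2]$.

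I expect the main obstacle to be the resonance and convergence analysis of the KAM scheme in step (i): one must verify that the one-sided cone structure of the Fourier support is preserved by the successive degree-$k$ conjugations (so that the "no antipodal pair'' mechanism keeps applying), control the accumulation of small divisors coming from $\alpha\in{\rm DC}^{1}$, and carry the estimates uniformly up to the spectral edges $E=\pm2$, where $A^{(0)}_E$ degenerates to a parabolic matrix. In this one-dimensional setting one can alternatively bypass the KAM machinery and argue as in the proof of Theorem \ref{sarexample}, via Avila's global theory: the complexified Lyapunov exponent $\epsilon\mapsto L(\alpha,A_E(\cdot+\mathrm i\epsilon))$ is convex with quantized acceleration and, since $v$ extends holomorphically and decays on the upper half-plane (this is again the cone hypothesis), it tends to $L(\alpha,A^{(0)}_E)$ as $\epsilon\to+\infty$, which pins down the acceleration, hence the uniform hyperbolicity type, near $\epsilon=0$.
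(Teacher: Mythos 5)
Your opening paragraph is exactly the paper's proof of the corollary: for $d=1$ with $\langle 1\rangle=1$, $w_1=1$, the cone condition $k_1\ge r|k_1|$ with $r\in(0,1]$ is equivalent to $k_1>0$, so $\Gamma_r=\Z^{+}$ and the statement is the $d=1$ specialization of Theorem \ref{maintheorem1}. That is all that is needed, and it matches the paper.

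The rest of your text is an outline of how you would prove Theorem \ref{maintheorem1} in this one-dimensional setting; it is not required for the corollary, but since you offer it: the elliptic/parabolic part (single, non-antipodal resonances coming from the one-sided Fourier support, handled by rotation conjugations) is precisely the mechanism of Proposition \ref{elliparabolic} and Lemma \ref{unique}. Your treatment of $E\notin[-2,2]$ via a direct linear power series $B_E=\id+\lambda B_E^{(1)}+\cdots$ differs from the paper's Proposition \ref{hyperbolic}, which instead runs the same KAM iteration until the residual error is smaller than $|\mathrm{Im}\,\xi|^3$ and only then diagonalizes via Lemma \ref{diaghyper}; this two-stage KAM argument applies uniformly to all complex $E\in\C\setminus[-2,2]$, whereas your sketch leaves the convergence near the edges $E=\pm2$ (where $A_E^{(0)}$ degenerates to a parabolic matrix and the linear scheme would need a Brjuno-type summability argument) and the passage from real to complex $E$ at the level of a heuristic. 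Your remark about bypassing KAM via Avila's global theory would also require more care than for Sarnak's example: a cone potential extends holomorphically only to the half-plane $\epsilon>-h$ rather than to all of $\C$, so the $\epsilon\to-\infty$ asymptotics that pin down the acceleration in the proof of Theorem \ref{sarexample} are not available in general.
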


We remark that the phenomenon described in Theorem \ref{maintheorem1} is totally different from the self-adjoint case where having open gaps is a typical phenomenon \cite{ABD, Eli, GS, GSV, Puig}. The most important  example is the  almost Mathieu operator:
\begin{equation*}
	(H_{2\lambda\cos,\alpha,x}\psi)_{n}=\psi_{n+1}+\psi_{n-1}+2\lambda\cos(x+n\alpha)\psi_{n},
\end{equation*}
whose spectrum is a Cantor set for all $\lambda\neq 0$, $\alpha\in \mathbb{R}\backslash \mathbb{Q}$ and all $x\in\mathbb{T}$ \cite{Av1}.
We also remark that  the cone structure assumption \eqref{cone-r} is  necessary due to the following counter-example where the angle of the cone is $\pi$.

\begin{proposition}\label{maintheorem2}
	Let $\alpha\in \mathbb{R} \backslash \mathbb{Q}$,  $|\lambda|\in (0,1)$, $|\epsilon|< -\log |\lambda|$, and 
	\begin{equation*}
		v_{\epsilon}(x)=2\lambda \cos(x+\mathrm{i}\epsilon).
	\end{equation*}
	Then $\Sigma_{v_{\epsilon},\alpha} = \Sigma_{2\lambda \cos,\alpha}$  is a Cantor set.
\end{proposition}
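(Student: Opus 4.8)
The plan is to pass to the transfer‑matrix cocycle and run Avila's global theory of one‑frequency analytic $\mathrm{SL}(2,\C)$ cocycles \cite{Av0}. Write $v_0(x)=2\lambda\cos x$ for the almost Mathieu potential, set
\[
A_E^{v}(x)=\begin{pmatrix} E-v(x) & -1\\[2pt] 1 & 0\end{pmatrix},\qquad L_\epsilon(E):=L\bigl(\alpha,\,A_E^{v_0}(\cdot+\mathrm{i}\epsilon)\bigr),
\]
and recall that, by Johnson's theorem (which holds verbatim for complex potentials and complex energies, see \cite{GJYZ,John}), $E\notin\Sigma_{v,\alpha}$ if and only if the cocycle $(\alpha,A_E^{v})$ is uniformly hyperbolic. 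The elementary but crucial observation is that $v_\epsilon(x)=2\lambda\cos(x+\mathrm{i}\epsilon)=v_0(x+\mathrm{i}\epsilon)$, hence $A_E^{v_\epsilon}(x)=A_E^{v_0}(x+\mathrm{i}\epsilon)$: the cocycle of $H_{v_\epsilon,\alpha,x}$ is exactly the $\mathrm{i}\epsilon$‑complexification of the almost Mathieu cocycle. Thus $\Sigma_{v_\epsilon,\alpha}=\{E:\ (\alpha,A_E^{v_0}(\cdot+\mathrm{i}\epsilon))\text{ is not uniformly hyperbolic}\}$, and the whole proposition reduces to showing that, for $|\epsilon|<-\log|\lambda|$, the complexified cocycle is uniformly hyperbolic if and only if the original one is (since, once the spectra coincide, Cantorness follows from the known Cantor structure of $\Sigma_{2\lambda\cos,\alpha}$).

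Next I would compute $L_\epsilon(E)$ for real $\epsilon$ and arbitrary $E\in\C$. By Avila's theory $\epsilon\mapsto L_\epsilon(E)$ is convex and piecewise affine with integer slopes (quantization of the acceleration). Using $v_0(x+\mathrm{i}\epsilon)=\lambda e^{-\epsilon}e^{\mathrm{i}x}+\lambda e^{\epsilon}e^{-\mathrm{i}x}$, Herman's subharmonic estimate gives $L_\epsilon(E)\ge\max\{0,\ |\epsilon|+\log|\lambda|\}$ for every $E$, while the Furstenberg--Kesten bound $L_\epsilon(E)\le\frac1{2\pi}\int_{\T}\log\|A_E^{v_0}(x+\mathrm{i}\epsilon)\|\,dx$ combined with Jensen's formula gives $L_\epsilon(E)=|\epsilon|+\log|\lambda|+o(1)$ as $|\epsilon|\to\infty$, uniformly on compact sets of $E$; in particular the asymptotic slopes are $\pm1$. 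Feeding these three facts into convexity and the integrality of slopes pins $L_\epsilon(E)$ down completely. (i) If $E\in\Sigma_{v_0,\alpha}$ then $L_0(E)=0$, since the Lyapunov exponent vanishes on the spectrum of the subcritical almost Mathieu operator when $|\lambda|<1$ \cite{AJ,Av0}; hence $L_\epsilon(E)=\max\{0,\ |\epsilon|+\log|\lambda|\}$, and in particular $L_\epsilon(E)=0$ for all $|\epsilon|\le-\log|\lambda|$. (ii) If $E\notin\Sigma_{v_0,\alpha}$ then $(\alpha,A_E^{v_0})$ is uniformly hyperbolic, hence regular with $L_0(E)>0$; the same convexity argument then forces $L_\epsilon(E)=\max\{L_0(E),\ |\epsilon|+\log|\lambda|\}$, so $L_{\cdot}(E)$ is constant and positive on the interval $|\epsilon|\le L_0(E)-\log|\lambda|$, which strictly contains $|\epsilon|\le-\log|\lambda|$.

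Finally I would combine this with the cocycle dichotomy. Fix $|\epsilon|<-\log|\lambda|$. If $E\in\Sigma_{v_0,\alpha}$ then by (i) $L_\epsilon(E)=0$, so $(\alpha,A_E^{v_0}(\cdot+\mathrm{i}\epsilon))$ has zero Lyapunov exponent and cannot be uniformly hyperbolic; hence $E\in\Sigma_{v_\epsilon,\alpha}$. If $E\notin\Sigma_{v_0,\alpha}$ then by (ii), near $\epsilon$ the function $L_{\cdot}(E)$ is affine and $L_\epsilon(E)>0$, so $(\alpha,A_E^{v_0}(\cdot+\mathrm{i}\epsilon))$ is regular with positive Lyapunov exponent and is therefore uniformly hyperbolic by Avila's criterion (a regular cocycle with positive Lyapunov exponent admits a dominated splitting); hence $E\notin\Sigma_{v_\epsilon,\alpha}$. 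This yields $\Sigma_{v_\epsilon,\alpha}=\Sigma_{v_0,\alpha}=\Sigma_{2\lambda\cos,\alpha}$, which is a Cantor set for every $\lambda\ne0$ and every irrational $\alpha$ by the solution of the Ten Martini Problem \cite{Av1}. I expect the substantive point to be exactly step (i)--(ii): identifying the precise width $-\log|\lambda|$ of the strip of subcriticality on the spectrum — which is why $|\epsilon|<-\log|\lambda|$ is the sharp threshold — and checking that over the spectral gaps the Lyapunov plateau is strictly wider. This is where the quantization of the acceleration, Herman's estimate and the vanishing of $L$ on the subcritical spectrum must be combined carefully; the remaining ingredients (stability of uniform hyperbolicity, Johnson's theorem) are soft.
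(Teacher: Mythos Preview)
Your proposal is correct and follows essentially the same approach as the paper: both identify $A_E^{v_\epsilon}$ with the $\mathrm{i}\epsilon$-complexification of the almost Mathieu cocycle, invoke the formula $L_\epsilon(E)=\max\{|\epsilon|+\log|\lambda|,\,L_0(E)\}$, and then apply Johnson's theorem together with Avila's characterization of uniform hyperbolicity (affine $L_\epsilon$ with positive value). The only cosmetic difference is that you sketch the derivation of this formula from Herman's inequality, the asymptotic slopes $\pm1$, and quantization of the acceleration, whereas the paper simply cites it as a known result of Avila \cite{Av0}.
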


\subsection{Lyapunov exponent}
In addition, the Lyapunov exponents of $H_{\lambda v,\alpha,x}$ in Theorem \ref{maintheorem1} can be exactly calculated. Recall that the eigenvalue equations $H_{\lambda v,\alpha,x}\psi=E\psi$ are equivalent to a certain family of the discrete dynamical systems, so called Schr\"odinger cocycle $(\alpha,S_{E,\lambda v})\in \mathbb{T}^{d}\times{\rm SL}(2,\mathbb{C})$, i.e., 
\begin{equation*}
	\begin{pmatrix}
		\psi_{n+1}\\ \psi_{n}
	\end{pmatrix}=S_{E,\lambda v}(x+n\alpha )\begin{pmatrix}
		\psi_{n}\\ \psi_{n-1}
	\end{pmatrix}, \ \text{where} \ S_{E,\lambda v}(x)=\begin{pmatrix}
		E-\lambda v(x)&-1\\1&0
	\end{pmatrix}.
\end{equation*}
Any formal solution $(\psi_{n})_{n\in \mathbb{Z}}$  can be reconstructed from the transfer matrix $S_{n}$, which is defined by $S_{0}=\mathrm{id}$, and for $n\geq 1$, by
\begin{equation*}
	S_{n}(x)=S_{E,\lambda v}(x+(n-1)\alpha) \cdots S_{E,\lambda v}(x), \quad S_{-n}(x)= S_{n}(x-n\alpha)^{-1}.
\end{equation*}
The Lyapunov exponent of  $(\alpha,S_{E,\lambda v})$,   denoted by $L(E)$, is defined   by
\begin{equation*}
	L(E)=\lim_{n\rightarrow \infty} \frac{1}{n} \int_{\mathbb{T}^{d}} \log \|S_{n}(x)\| \mathrm{d}x.
\end{equation*}

In general, it is hard to give a precise expression of the Lyapunov exponent $L(E)$ except for some very special examples.  It is  well-known  that $L(E)=\max\{0,\log |\lambda|\}$  in the spectrum \cite{Av0,BJ} for the almost Mathieu operator, however, the formula of $L(E)$  for $E$ outside the spectrum is  not known. For other analytic quasi-periodic operators, it is almost impossible to have a precise expression of $L(E)$ even though $E$ is in the spectrum  \cite{Av0}. Up to now, the only quasi-periodic Schr\"odinger operator whose Lyapunov exponent can be calculated explicitly for all $E\in \mathbb{C}$ is the Maryland model, but this is due to the unboundedness and monotonicity of the potential \cite{GFP, JL}.

In the following, for any  $E\in \mathbb{C}$, we give the exact expression of $L(E)$ for the operators defined in Theorem \ref{sarexample} and Theorem \ref{maintheorem1}.   For Sarnak's  example \eqref{s1}, we have 
\begin{theorem}\label{maintheorem3-1}
	Let $\alpha\in \mathbb{R}\backslash\mathbb{Q}$ and $v(x)=\lambda\mathrm{e}^{\mathrm{i}x}$. Then for any $\lambda \in \mathbb{R}\backslash \{0\}$, its Lyapunov exponent satisfies	
	\begin{equation}\label{sarlya1}
		L(E)=\max\{0,\log |\lambda|\}, \quad \forall \ E\in \Sigma_{\lambda \exp,\alpha}.
	\end{equation}	
	Moreover, we have the following:	
	\begin{equation}\label{sarlya2}
		L(E)=\max\bigg\{\log \Big|\frac{E}{2}+\frac{\sqrt{E^{2}-4}}{2}\Big|,\log |\lambda|\bigg\}, \quad \forall \ E\in \mathbb{C}.
	\end{equation}
\end{theorem}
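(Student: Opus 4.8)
The strategy is Avila's global theory applied to the complexified cocycle, exploiting that shifting $x\mapsto x+\mathrm i\epsilon$ merely rescales the potential $\lambda v\mapsto \lambda\mathrm e^{-\epsilon}v$. For each $\epsilon\in\R$ set (abusing notation) $L(\epsilon):=L\bigl(\alpha,S_{E,\lambda v}(\cdot+\mathrm i\epsilon)\bigr)$, so that $L(0)=L(E)$; since $v(x)=\mathrm e^{\mathrm i x}$ is entire, $S_{E,\lambda v}(x+\mathrm i\epsilon)=\left(\begin{smallmatrix}E-\lambda\mathrm e^{-\epsilon}\mathrm e^{\mathrm i x}&-1\\1&0\end{smallmatrix}\right)$ is a bounded analytic $\mathrm{SL}(2,\C)$-cocycle on $\T$ for \emph{every} $\epsilon$. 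Hence Avila's convexity theorem and quantization of the acceleration give that $\epsilon\mapsto L(\epsilon)$ is convex and piecewise affine on all of $\R$, with all one-sided slopes in $\Z$ (with the normalization $\T=\R/2\pi\Z$ of this paper, so Fourier modes are $\mathrm e^{\mathrm i kx}$ and $\partial_\epsilon L\in\Z$).

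Next I would evaluate the two ends. As $\epsilon\to+\infty$ the potential tends to $0$ uniformly, so $S_{E,\lambda v}(\cdot+\mathrm i\epsilon)$ converges in $C^\omega(\T,\mathrm{SL}(2,\C))$ to the constant free cocycle $A_0(E)=\left(\begin{smallmatrix}E&-1\\1&0\end{smallmatrix}\right)$; by continuity of the Lyapunov exponent for irrational $\alpha$ (together with the crude bound $L(\epsilon)\le\log(|E|+2+|\lambda|\mathrm e^{-\epsilon})$ and upper semicontinuity) we get $L(\epsilon)\to L_\infty(E):=\log\rho\bigl(A_0(E)\bigr)=\log\bigl|\tfrac E2+\tfrac{\sqrt{E^2-4}}2\bigr|$, the branch chosen so this eigenvalue has modulus $\ge 1$ (the two eigenvalues of $A_0(E)$ have product $1$); note $L_\infty(E)\ge 0$ with equality exactly on $[-2,2]$. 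A convex function with integer slopes having a finite limit at $+\infty$ is eventually constant, so $L\equiv L_\infty(E)$ with slope $0$ near $+\infty$. As $\epsilon\to-\infty$ the coupling $|\lambda|\mathrm e^{-\epsilon}\to\infty$; the standard large-coupling asymptotics for Schr\"odinger cocycles — upper bound from $\|S_{E,\lambda v}(\cdot+\mathrm i\epsilon)\|\le|E|+2+|\lambda|\mathrm e^{-\epsilon}$, lower bound by propagating a narrow cone around $e_1$ and using $|\mathrm e^{\mathrm i x}|\equiv 1$, i.e. $\int_{\T}\log|v|\,\mathrm dx=0$ — give $L(\epsilon)=-\epsilon+\log|\lambda|+o(1)$, so $L$ has slope $-1$ near $-\infty$.

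It remains to interpolate. A convex piecewise-affine function on $\R$ whose (non-decreasing, integer) slopes equal $-1$ near $-\infty$ and $0$ near $+\infty$ assumes only the slope values $-1$ and $0$, hence has a single breakpoint $\epsilon_0$ and equals $L_\infty(E)$ for $\epsilon\ge\epsilon_0$ and $L_\infty(E)+\epsilon_0-\epsilon$ for $\epsilon\le\epsilon_0$. Comparing the latter with $L(\epsilon)=-\epsilon+\log|\lambda|+o(1)$ forces $\epsilon_0=\log|\lambda|-L_\infty(E)$, whence
\[
L(\epsilon)=\max\{L_\infty(E),\ \log|\lambda|-\epsilon\}\qquad(\epsilon\in\R).
\]
Setting $\epsilon=0$ gives \eqref{sarlya2}. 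Finally \eqref{sarlya1} follows by restricting to $E\in\Sigma_{\lambda\exp,\alpha}$ and invoking Theorem \ref{sarexample}: if $|\lambda|\le 1$ then $\Sigma_{\lambda\exp,\alpha}=[-2,2]$, so $L_\infty(E)=0$ and $L(E)=\max\{0,\log|\lambda|\}$; if $|\lambda|>1$ then $\Sigma_{\lambda\exp,\alpha}$ is the ellipse of Theorem \ref{sarexample}(2), on which, writing $E=\mu+\mu^{-1}$ with $\mu,\mu^{-1}$ the eigenvalues of $A_0(E)$, one checks $\{|\mu|,|\mu^{-1}|\}=\{|\lambda|,|\lambda|^{-1}\}$, so $L_\infty(E)=\log|\lambda|$ and again $L(E)=\log|\lambda|=\max\{0,\log|\lambda|\}$.

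The step I expect to cost the most care is the precise large-coupling asymptotics $L(\epsilon)=-\epsilon+\log|\lambda|+o(1)$ as $\epsilon\to-\infty$ — getting the constant right in the lower bound, since this pins down the breakpoint $\epsilon_0$; the $+\infty$ end and the convexity/quantization input are routine here because $v$ is entire, so the acceleration package applies on the whole line with no domain restriction.
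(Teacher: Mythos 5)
Your proof is correct and, in its core, the same as the paper's: both complexify the phase, invoke Avila's quantization and convexity of $\epsilon\mapsto L(E,\epsilon)$, read off slope $0$ with limit $\log\big|\tfrac E2+\tfrac{\sqrt{E^2-4}}2\big|$ at $\epsilon\to+\infty$ and slope $-1$ with asymptote $-\epsilon+\log|\lambda|$ at $\epsilon\to-\infty$, and interpolate. Two places differ in presentation, though, and one deserves a remark. For the $\epsilon\to-\infty$ end you sketch a cone-propagation lower bound; the paper instead factors $S_\epsilon(x)=\mathrm e^{-\epsilon}\big(\mathrm e^{\mathrm i x}\!\left(\begin{smallmatrix}-\lambda&0\\0&0\end{smallmatrix}\right)+o(1)\big)$ and cites continuity of the Lyapunov exponent, which is cleaner for $\mathrm{SL}(2,\C)$ cocycles but amounts to the same pinning of the constant. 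More substantively, you deduce \eqref{sarlya1} from \eqref{sarlya2} by invoking Theorem \ref{sarexample}; within the paper's logical architecture that theorem is a corollary of Theorem \ref{maintheorem3-1} (Section 6), so the paper instead characterizes the spectrum directly via Proposition \ref{equi} and Proposition \ref{uh}: for $E\in\Sigma$, $(\alpha,S_{E,\lambda v})\notin\mathcal{UH}$, hence either $L(E)=0$ or $\epsilon\mapsto L(E,\epsilon)$ has a corner at $\epsilon=0$, which with your max formula forces $L(E)=\max\{0,\log|\lambda|\}$ in both cases. Since Theorem \ref{sarexample} is also an independently known result (Sarnak, Boca, Borisov--Fedotov), your appeal to it is not circular, but replacing it with the Johnson-type criterion makes the argument self-contained and lets the spectrum characterization fall out as a byproduct, which is how the paper organizes things.
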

\begin{remark}
	 \eqref{sarlya1} was also announced by Borisov-Fedotov \cite{BF}, while  \eqref{sarlya2}  is totally new.   Our proofs are new and based on  Avila's global theory \cite{Av0}.
\end{remark}

If the potential has cone structure $\Gamma_r$, Theorem \ref{maintheorem1} states that the corresponding  Schr\"odinger operator is isospectral to the discrete free Laplacian.  The following theorem shows that they also share the same  Lyapunov exponent with the discrete free Laplacian.

\begin{theorem}\label{maintheorem3}
	Under the same assumptions as in Theorem \ref{maintheorem1}, we have 	
	\begin{equation*}
		L(E)=\log \Big|\frac{E}{2}+\frac{\sqrt{E^{2}-4}}{2}\Big|, \quad \forall \ E \in \mathbb{C}.
	\end{equation*}
	In particular, 
	\begin{equation*}
		L(E)=0, \quad \forall \ E\in \Sigma_{\lambda v,\alpha}.
	\end{equation*}	
\end{theorem}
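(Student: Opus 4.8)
The plan is to reduce, energy by energy, the computation of $L(E)$ to that of the free cocycle $(\alpha,S_{E,0})$; write $L_{0}(E)$ for the Lyapunov exponent of the latter. Since $(\alpha,S_{E,0})$ is a constant cocycle, Gelfand's formula gives $L_{0}(E)=\log r(S_{E,0})$, the logarithm of the spectral radius, and the eigenvalues of $S_{E,0}$ are $\mu(E)^{\pm1}$ with $\mu(E)=\tfrac{E}{2}+\tfrac{\sqrt{E^{2}-4}}{2}$ for the branch of the root making $|\mu(E)|\geq1$, so $L_{0}(E)=\log\big|\tfrac{E}{2}+\tfrac{\sqrt{E^{2}-4}}{2}\big|$. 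On $[-2,2]$ this modulus is identically $1$, so the ``in particular'' assertion is automatic once the main formula is proved. I would prove the main formula by splitting $\C$ into $\C\setminus[-2,2]$ --- which by Theorem \ref{maintheorem1} is the resolvent set of $H_{\lambda v,\alpha,x}$ --- and $[-2,2]=\Sigma_{\lambda v,\alpha}$, treating the two regions separately.

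On $\C\setminus[-2,2]$ I would invoke the analytic reducibility contained in the proof of Theorem \ref{maintheorem1}: for such $E$ there is, for some $0<h'<h$, a conjugacy $Z=Z_{E}\in C^{\omega}(\mathbb{T}^{d}_{h'},\mathrm{SL}(2,\C))$ with $Z,Z^{-1}$ bounded and $Z(x+\alpha)^{-1}S_{E,\lambda v}(x)Z(x)\equiv S_{E,0}$. The mechanism is that the Fourier support of $v$ lies in the additively closed cone $\Gamma_{r}$ of \eqref{cone-r}, so one may seek $Z=I+W$ with $W$ supported in $\Gamma_{r}$, and then each cohomological equation is solved at a frequency $\mathbf{k}\in\Gamma_{r}$ where the pertinent divisors are $1-\mathrm{e}^{\mathrm{i}\langle\mathbf{k},\alpha\rangle}$ and $\mu(E)^{\pm2}-\mathrm{e}^{\mathrm{i}\langle\mathbf{k},\alpha\rangle}$: the first is bounded below by the Diophantine estimate \eqref{Almost} (its polynomial loss in $\mathbf{k}$ is absorbed by the analytic gain $\mathrm{e}^{-h'|\mathbf{k}|_{\eta}}$), while the last two are bounded away from $0$ uniformly on compact subsets of $\C\setminus[-2,2]$ since $|\mu(E)|\neq1$ there. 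Hence no small divisors arise, and the iteration converges. As a bounded analytic conjugacy preserves the Lyapunov exponent, $L(E)=L_{0}(E)$ for every $E\in\C\setminus[-2,2]$.

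On $[-2,2]$, where $L_{0}\equiv0$, I would argue by subharmonicity in the energy. First, $L(E)\geq0$ for every $E$, because $S_{n}(x)\in\mathrm{SL}(2,\C)$ forces $\|S_{n}(x)\|\geq1$. Next, $E\mapsto L(E)$ is subharmonic on $\C$: the functions $u_{n}(E)=\tfrac1n\int_{\mathbb{T}^{d}}\log\|S_{n}(x,E)\|\,\mathrm{d}x$ are subharmonic in $E$, and by subadditivity $L$ is their decreasing limit along $n=2^{j}$. Fix now $E_{0}\in[-2,2]$ and a small $\rho>0$; the circle $\{E_{0}+\rho\mathrm{e}^{\mathrm{i}\theta}:\theta\in[0,2\pi)\}$ meets $[-2,2]$ in at most two points, hence by the previous step $L=L_{0}$ at $\mathrm{d}\theta$-almost every point of it, and the sub-mean-value inequality gives
\begin{equation*}
	L(E_{0})\leq\frac{1}{2\pi}\int_{0}^{2\pi}L\big(E_{0}+\rho\mathrm{e}^{\mathrm{i}\theta}\big)\,\mathrm{d}\theta=\frac{1}{2\pi}\int_{0}^{2\pi}L_{0}\big(E_{0}+\rho\mathrm{e}^{\mathrm{i}\theta}\big)\,\mathrm{d}\theta\longrightarrow L_{0}(E_{0})=0\qquad(\rho\to0^{+}),
\end{equation*}
using the continuity of $L_{0}$. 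Therefore $L(E_{0})=0$, and combining the two regions yields $L(E)=\log\big|\tfrac{E}{2}+\tfrac{\sqrt{E^{2}-4}}{2}\big|$ for all $E\in\C$, and in particular $L\equiv0$ on $\Sigma_{\lambda v,\alpha}=[-2,2]$.

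The main obstacle is the first region: one must know the reducibility to the free cocycle holds for \emph{all} $E\notin[-2,2]$, including $E$ arbitrarily close to the band edges $\pm2$ where $S_{E,0}$ is only weakly hyperbolic, and with $Z_{E}$ genuinely bounded analytic --- this is exactly what the cone-adapted scheme in the proof of Theorem \ref{maintheorem1} delivers, and it is also why the argument does not reach $E\in(-2,2)$, where the divisors $\mu(E)^{\pm2}-\mathrm{e}^{\mathrm{i}\langle\mathbf{k},\alpha\rangle}$ may vanish (elliptic resonances); that gap is precisely what the subharmonicity step closes, and that step is routine once $L$ is known to be subharmonic in $E$. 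Alternatively the same conclusion fits inside Avila's global theory: along the complexification $x\mapsto x+\mathrm{i}\delta\tilde{w}$ in the cone direction $\tilde{w}_{j}=\langle j\rangle^{\eta}w_{j}$, the shifted potential $\lambda v(x+\mathrm{i}\delta\tilde{w})$ stays uniformly small for $\delta\geq0$ and tends to $0$ as $\delta\to+\infty$, so $\delta\mapsto L(\alpha,S_{E,\lambda v}(\cdot+\mathrm{i}\delta\tilde{w}))$ is convex, nonnegative, and converges to $L_{0}(E)$, and using the smallness of $\lambda$ one checks that this convex function is already constant at $\delta=0$.
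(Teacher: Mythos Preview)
Your argument is correct, and on the spectrum $[-2,2]$ it takes a genuinely different route from the paper. For $E\in\C\setminus[-2,2]$ both you and the paper use reducibility: the paper conjugates $S_{E,\lambda v}$ to $A=\begin{pmatrix}\mathrm{e}^{\mathrm{i}\xi}&\zeta\\0&\mathrm{e}^{-\mathrm{i}\xi}\end{pmatrix}$ with $|\zeta|\leq 2$ and then applies Proposition~\ref{hyperbolic} to reduce to a diagonal hyperbolic constant $A'$ with $\mathrm{Im}\,\xi'=\mathrm{Im}\,\xi$, giving $L(E)=|\mathrm{Im}\,\xi|=L_0(E)$. (Reducibility all the way to $S_{E,0}$ is slightly more than what is actually established there, but the Lyapunov exponent conclusion is the same.)

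For $E\in[-2,2]$ the paper proceeds dynamically: it conjugates $(\alpha,S_{E,\lambda v})$ to $(\alpha,A_n\mathrm{e}^{F_n})$ via Proposition~\ref{elliparabolic} and then reads off $L(E)=0$ from the explicit growth control of Corollary~\ref{finite} (finitely many resonances, reducible to an elliptic or parabolic constant) and Corollary~\ref{upper} (infinitely many resonances, sublinear growth $\|\mathcal{A}_j\|_0=o(j)$). Your subharmonicity-in-$E$ argument bypasses this entire analysis: once $L=L_0$ off a null set of each small circle about $E_0\in[-2,2]$, the sub-mean-value inequality together with $L\geq 0$ forces $L(E_0)=0$. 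This is a legitimately cleaner proof of Theorem~\ref{maintheorem3} alone; the price is that it yields no structural information about the cocycle on the spectrum, whereas the paper's route produces Proposition~\ref{elliparabolic} and its corollaries, which are in any case needed for Theorems~\ref{maintheorem4} and~\ref{maintheorem5}.

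One caveat on your final paragraph: the ``global theory'' alternative, as written, does not close. Convexity of $\delta\mapsto L\big(\alpha,S_{E,\lambda v}(\cdot+\mathrm{i}\delta\tilde w)\big)$, nonnegativity, and convergence to $L_0(E)$ at $+\infty$ only force the function to be eventually nonincreasing; without quantization of the acceleration---a one-frequency phenomenon unavailable for $d\geq 2$, let alone $d=\infty$---there is no mechanism making it constant already at $\delta=0$. Smallness of $\lambda$ does not help: a convex nonnegative function tending to $0$ (e.g.\ $\mathrm{e}^{-\delta}$) need not vanish at the origin. Your main subharmonicity argument does not rely on this, so simply drop the alternative.
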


\subsection{Failure of Moser-P\"{o}schel argument}

Almost reducibility is an effective approach to deal with the spectral problems of almost-periodic operators, especially for the small potentials \cite{AJ, Cai2, ds,Eli,LYZZ,MP}.  Recall that two cocycles $(\alpha, A), (\alpha, A') \in \mathbb{T}^{d} \times C^{\omega}(\mathbb{T}^{d}, \mathrm{SL}(2,\mathbb{C}))$ are analytically conjugated if there exists $B\in C^{\omega}(2\mathbb{T}^{d}, \mathrm{SL}(2,\mathbb{C}))$ such that
\begin{equation*}
	B(x+\alpha)^{-1} A(x)B(x)=A'(x).
\end{equation*}
The almost-periodic cocycle $(\alpha, A)$ is almost reducible if the closure of its analytic conjugates contains a constant matrix. Moreover, the cocycle is reducible if it is analytically conjugated to a constant matrix. In the self-adjoint setting, if the potential is small, and the frequency $\alpha \in \mathrm{DC}^d$ with $d\in \mathbb{N}^+$,  Eliasson's famous result \cite{Eli} states that for all $E\in \Sigma_{\lambda v,\alpha}$ the cocycle $(\alpha,S_{E,\lambda v})$ is almost reducible. If, furthermore, the rotation number is Diophantine with respect to $\alpha$ or rational dependent, then  $(\alpha, S_{E,\lambda v})$ is in fact reducible.  For the non-self-adjoint operator, the rotation number is not well-defined since the projection of $\mathrm{SL}(2,\mathbb{C})$ cocycle is not a circle diffeomorphism. However, we can find another object that plays the same role as the rotation number. Indeed, for any $E\in [-2,2]$, if we define 
$$\rho=\rho(E):= \arccos \Big(\frac{E}{2}\Big)\mod \pi,$$ then we have the following:

\begin{theorem}\label{maintheorem4}
	Under the same assumptions as in Theorem \ref{maintheorem1},   we have 
	\begin{enumerate}[font=\normalfont, label={(\arabic*)}]
		\item If $\rho\in {\rm DC}(\alpha)=\cup_{\kappa>0} {\rm DC}_{\kappa,\tau}(\alpha)$, where 
		\begin{equation*}
			{\rm DC}_{\kappa,\tau}(\alpha) :=\bigg\{\rho\in \mathbb{R}: \| 2\rho -\langle {\bf k},  \alpha\rangle \|_{\mathbb{T}}> \kappa\prod\limits_{j\in\mathbb N}\frac{1}{1+\langle j\rangle^\tau|k_j|^\tau}, \ \forall \ {\bf k}\in \mathbb{Z}^{d}_{*}\bigg\},
		\end{equation*}
		then  
		$(\alpha, S_{E,\lambda v})$ is reducible to $\big(\alpha,\begin{pmatrix}
			\mathrm{e}^{\mathrm{i}\rho}&0\\
			0&\mathrm{e}^{-\mathrm{i}\rho}
		\end{pmatrix}\big)$.
		\item If $2\rho=\langle {\bf k},\alpha\rangle \mod 2\pi$ for some $\mathbf{k}\in \mathbb{Z}^{d}_{*}$, then  $(\alpha, S_{E,\lambda v})$ is reducible to $(\alpha,A)$ where $A=\begin{pmatrix}
			1&0\\0&1
		\end{pmatrix}$ or $A=\begin{pmatrix}
			1&\zeta\\0&1
		\end{pmatrix}$ with $\zeta\neq0$.
	\end{enumerate}	
\end{theorem}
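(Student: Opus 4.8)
The plan is to continue, in a genuinely convergent way, the almost‑reducibility scheme that already underlies Theorem~\ref{maintheorem1}, and to extract the dichotomy from the arithmetic relation between $\rho$ and $\alpha$. Fix $E\in(-2,2)$ (the endpoints $E=\pm2$, where $\rho\in\pi\Z$ and $S_{\pm2,0}$ is already in Jordan form, are treated by a short separate argument). Conjugating by the constant $P=P(E)\in\mathrm{GL}(2,\C)$ that diagonalizes $S_{E,0}$ into $R_\rho:=\left(\begin{smallmatrix}\mathrm e^{\mathrm i\rho}&0\\0&\mathrm e^{-\mathrm i\rho}\end{smallmatrix}\right)$, the cocycle becomes $(\alpha,\,R_\rho+\lambda F(\cdot))$ with $F(x)=-v(x)\,P^{-1}\left(\begin{smallmatrix}1&0\\0&0\end{smallmatrix}\right)P$, which still has Fourier support in $\Gamma_r$ and satisfies $\|\lambda F\|_{h}\lesssim|\lambda|\,\|v\|_{h}$. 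The decisive structural input, which I take from the proof of Theorem~\ref{maintheorem1}, is that $\Gamma_r\subset\Z^d_*$ is an integer cone of angle $<\pi$, so any finite sum of vectors in $\Gamma_r$ again lies in $\Gamma_r$ and in particular is nonzero; hence every perturbation produced along the KAM iteration has vanishing mean, the constant part of the cocycle never drifts from $R_\rho$, all \emph{non‑resonant} homological equations are solved thanks to $\alpha\in\mathrm{DC}^d$ (diagonal denominators $\mathrm e^{\mathrm i\langle\mathbf{k},\alpha\rangle}-1$, $\mathbf{k}\in\Gamma_r$), and the only residual terms sit on the off‑diagonal Fourier modes $\mathbf{k}\in\Gamma_r$ with $\mathrm e^{\mathrm i(\langle\mathbf{k},\alpha\rangle\mp2\rho)}-1$ small or zero. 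Thus $(\alpha,S_{E,\lambda v})$ is analytically almost reducible to $(\alpha,R_\rho)$, and it remains to decide when the residual modes can be removed as well.

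For part (1), assume $\rho\in\mathrm{DC}(\alpha)$. The inequality $\|2\rho-\langle\mathbf{k},\alpha\rangle\|_{\T}>\kappa\prod_{j}(1+\langle j\rangle^\tau|k_j|^\tau)^{-1}$ for all $\mathbf{k}\in\Z^d_*$ bounds below \emph{both} families of off‑diagonal denominators (for the $(2,1)$‑entry apply it to $-\mathbf{k}\in\Z^d_*$), so every homological equation becomes solvable with only the usual polynomial‑type loss; the iteration then converges and, the constant part having stayed equal to $R_\rho$ throughout, the limit is the constant cocycle $(\alpha,R_\rho)$. The passage from the $\mathrm{SL}(2,\R)$ convergence statements (Dinaburg–Sinai/Eliasson/Hou–You type) to $\mathrm{SL}(2,\C)$ is routine, since one works in a fixed neighbourhood of the elliptic element $R_\rho$.

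For part (2), suppose $2\rho=\langle\mathbf{k}_0,\alpha\rangle\pmod{2\pi}$ for some $\mathbf{k}_0\in\Z^d_*$. Taking the representative $\langle\mathbf{k}_0,\alpha\rangle\in[0,2\pi)$ and conjugating by $B_0(x)=\left(\begin{smallmatrix}\mathrm e^{\mathrm i\langle\mathbf{k}_0,x\rangle/2}&0\\0&\mathrm e^{-\mathrm i\langle\mathbf{k}_0,x\rangle/2}\end{smallmatrix}\right)\in C^\omega(2\T^d,\mathrm{SL}(2,\C))$ turns $(\alpha,R_\rho)$ into $(\alpha,I)$: the residual diagonal phase is $\mathrm e^{\mathrm i(\rho-\langle\mathbf{k}_0,\alpha\rangle/2)}$, and since $\rho,\langle\mathbf{k}_0,\alpha\rangle/2\in[0,\pi)$ with $\rho-\langle\mathbf{k}_0,\alpha\rangle/2\equiv0\pmod\pi$, this phase is exactly $1$ (which is also why neither $-I$ nor a negative unipotent can occur). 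Performing this untwisting at the finite KAM step at which $\mathbf{k}_0$ first appears — its analyticity loss $\mathrm e^{O(|\mathbf{k}_0|_\eta)}$ being absorbed by the super‑exponential smallness of the perturbation there — one gets a cocycle $(\alpha,I+\widetilde F)$ whose perturbation has Fourier support in $\Gamma_r\cup(\Gamma_r+\mathbf{k}_0)\cup(\Gamma_r-\mathbf{k}_0)$; the diagonal entries keep support in $\Gamma_r$ (mean zero), and because $\Gamma_r$ has angle $<\pi$ at most one of $\pm\mathbf{k}_0$ lies in $\Gamma_r$, so at most one off‑diagonal slot acquires a nonzero mean $\zeta$. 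Collecting that mean into the limiting constant, the remaining iteration converges and $(\alpha,S_{E,\lambda v})$ is analytically conjugate to a constant cocycle $(\alpha,A)$ with $A$ unipotent near $I$; $A$ cannot be loxodromic or hyperbolic, for then $(\alpha,S_{E,\lambda v})$ would be uniformly hyperbolic and $E$ would lie in the resolvent set, contradicting $E\in\Sigma_{\lambda v,\alpha}=[-2,2]$ (consistently, $L(E)=0$ by Theorem~\ref{maintheorem3}), and an elliptic $A$ with $\mathrm{tr}\,A\neq2$ is excluded/reabsorbed into $B_0$ because after the untwisting the surviving rotations lie in $\{\langle\mathbf{k},\alpha\rangle:\mathbf{k}\in\Z^d_*\}$. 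Hence, after a constant conjugacy normalizing the triangular form, $A=\left(\begin{smallmatrix}1&0\\0&1\end{smallmatrix}\right)$ or $A=\left(\begin{smallmatrix}1&\zeta\\0&1\end{smallmatrix}\right)$ with $\zeta\neq0$ — and which of the two occurs is precisely the mean $\zeta$ examined in the next subsection.

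The main obstacle is the end of part (2). First one must isolate the single resonance cleanly: a second resonant $\mathbf{k}\neq\mathbf{k}_0$ would force $\langle\mathbf{k}-\mathbf{k}_0,\alpha\rangle$ close to $0$, but $\langle\mathbf{j},\alpha\rangle\in2\pi\Z$ forces $\mathbf{j}=0$ by the rational independence of $(1,\alpha)$, and $\alpha\in\mathrm{DC}^d$ then gives a fixed positive lower bound for $\|\langle\mathbf{k}-\mathbf{k}_0,\alpha\rangle\|$, which beats the perturbation size after finitely many steps. Second, once the mean has been absorbed, the tail of the scheme is a reduction of a cocycle that is a small perturbation of a fixed \emph{parabolic} constant, which is genuinely more delicate than the elliptic case and must be carried out with the analyticity loss from $B_0$ under control. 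Finally, since there is no rotation number for non‑self‑adjoint cocycles, one has to make the classification of the limiting constant $A$ rigorous purely dynamically — through uniform hyperbolicity $\Leftrightarrow$ resolvent set, together with the bookkeeping of which diagonal phases and which shifted Fourier cones survive the iteration. The remaining ingredients — the weighted norms $|\cdot|_\eta$ accommodating $d=\infty$ and the convergence estimates — are those already in place for Theorem~\ref{maintheorem1}.
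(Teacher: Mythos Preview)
Your overall plan — run the KAM scheme from Theorem~\ref{maintheorem1} and then use the arithmetic of $\rho$ to upgrade almost reducibility to reducibility — is the paper's. The real gap is the claim that ``the constant part of the cocycle never drifts from $R_\rho$.'' In the scheme actually established (Proposition~\ref{elliparabolic}), resonant steps \emph{do} shift the constant: whenever $\|2\rho_n-\langle\mathbf{k}^*,\alpha\rangle\|_{\T}<\varepsilon_n^{1/10}$ for some $\mathbf{k}^*\in\mathcal{T}_{N_n}\Gamma_{r_n}$, one must conjugate by the rotation $Q_{\mathbf{k}^*}$, after which $\rho_{n+1}=\rho_n-\tfrac12\langle\mathbf{k}^*,\alpha\rangle$. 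Without the rotation the resonant Fourier mode is not eliminated and the perturbation does not shrink, so almost reducibility fails. The zero-mean property of the perturbation (forced by the cone) is a separate fact and does not prevent this drift of $A_n$.

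This matters for Part~(1) because your direct argument — ``$\rho\in\mathrm{DC}_{\kappa,\tau}(\alpha)$ bounds all off-diagonal denominators below, so the iteration converges'' — needs the initial smallness $\varepsilon_0=|\lambda|\,\|v\|_h$ to beat the $\kappa$-dependent lower bound already at step $0$; since $\kappa>0$ is arbitrary, this forces $\lambda_0$ to depend on $E$, contrary to the uniform hypothesis inherited from Theorem~\ref{maintheorem1}. The paper's remedy is to allow finitely many resonances, track the accumulated drift $\mathbf d_j=\sum_{s\le j}\mathbf{k}^*_{n_s}$, and use the growth estimate $|\mathbf{k}^*_{n_{j+1}}|_\eta\ge|\mathbf{k}^*_{n_j}|_\eta^{1+\eta/(4+\eta)}$ (Proposition~\ref{elliparabolic}\ref{item:spe}) to get $|\mathbf d_j|_\eta\le 2|\mathbf{k}^*_{n_j}|_\eta$; the resonance inequality $\|2\rho-\langle\mathbf d_j,\alpha\rangle\|_{\T}\le\varepsilon_{n_j}^{1/10}$ then contradicts the Diophantine (resp.\ rational) hypothesis on $\rho$ for large $j$. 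Hence $\rho\notin\bar\Theta$, Corollary~\ref{finite} gives reducibility to $R_{\rho_{\tilde n}}$ (or a parabolic), and a single rotation $Q_{\mathbf m}$ applied \emph{after} convergence restores $R_\rho$ (or $I$). For Part~(2) your untwist by $B_0$ mid-iteration destroys the cone structure of the perturbation (its support becomes $\Gamma_r\cup(\Gamma_r\pm\mathbf{k}_0)$, no longer contained in any $\Gamma_{r'}$), so Proposition~\ref{step} is unavailable for the tail; the paper avoids this by running the full iteration first with the cone intact — where a parabolic constant is automatically non-resonant since all denominators reduce to $\mathrm e^{\mathrm i\langle\mathbf k,\alpha\rangle}-1$ with $\mathbf k\in\Gamma_r$ — and only applies the rotation at the very end.
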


In the self-adjoint case, by the well-known Moser-P\"{o}schel argument \cite{MP}, it is known that $(\alpha, S_{E,\lambda v})$ is reducible to identity if and only if $E$ is located at the edges of the collapsed gaps. However, in the non-self-adjoint case, one may need more caution due to the following result. 

\begin{theorem}\label{maintheorem5}
	Let $d\in \mathbb{N}^{+} \cup \{\infty\}$,  $\alpha\in {\rm DC}^{d}$, and $v(x)=\mathrm{e}^{\mathrm{i} \langle \mathbf{m},x\rangle}$. Then there exists $E\in (-2,2)$ such that $(\alpha, S_{E,\lambda v})$ is reducible to $\begin{pmatrix}
		1&\zeta\\ 0 &1
	\end{pmatrix}$ with $\zeta\neq 0$,
	provided that $|\lambda|$ is sufficiently small.
\end{theorem}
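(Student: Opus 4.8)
The plan is to take for $E$ the unique energy in $(-2,2)$ at which the phase $\rho(E)$ is resonant with the single Fourier mode $\mathbf m$ of the potential, and then to run the first step of the reducibility scheme at that energy; Theorem~\ref{maintheorem4} will force the reduced constant matrix to be parabolic, while the first-order computation will show it to be a \emph{nontrivial} parabolic. To set up the framework: since $\mathbf m\neq 0$, up to replacing $v$ by $\overline v=\mathrm e^{-\mathrm i\langle\mathbf m,x\rangle}$ (which only conjugates the spectrum and the reducibility data, hence changes nothing for a statement about $E\in(-2,2)\subset\mathbb R$) we may assume $\mathbf m$ has a positive component, and then $v=\mathrm e^{\mathrm i\langle\mathbf m,x\rangle}$ lies in a cone $\Gamma_r$ for a suitable $r\in(0,1]$ (take the weight $w$ concentrated on that index). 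Thus Theorems~\ref{maintheorem1}, \ref{maintheorem3} and \ref{maintheorem4} apply for $|\lambda|$ small: in particular $\Sigma_{\lambda v,\alpha}=[-2,2]$, the cocycle is subcritical, and for any $E\in[-2,2]$ with $2\rho(E)\equiv\langle\mathbf k,\alpha\rangle\pmod{2\pi}$ for some $\mathbf k\in\mathbb Z^d_*$ the cocycle $(\alpha,S_{E,\lambda v})$ is reducible either to $\mathrm{Id}$ or to $\left(\begin{smallmatrix}1&\zeta\\0&1\end{smallmatrix}\right)$ with $\zeta\neq0$. Since $\alpha$ is Diophantine, $\langle\mathbf m,\alpha\rangle\not\equiv0\pmod{2\pi}$, so there is a unique $E_\ast\in(-2,2)$ with $2\rho(E_\ast)\equiv\langle\mathbf m,\alpha\rangle\pmod{2\pi}$; this is the candidate, and it remains to show that $(\alpha,S_{E_\ast,\lambda v})$ is \emph{not} reducible to the identity.

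Next I would carry out one explicit normalization step at $E=E_\ast$. A constant conjugation $P\in\mathrm{SL}(2,\mathbb C)$ diagonalizing $S_{E_\ast,0}$ brings the cocycle to $\big(\alpha,\ \Lambda+\lambda\mathrm e^{\mathrm i\langle\mathbf m,x\rangle}C\big)$ with $\Lambda=\operatorname{diag}(\mathrm e^{\mathrm i\rho},\mathrm e^{-\mathrm i\rho})$, $\rho=\rho(E_\ast)$, and $C=-P^{-1}\left(\begin{smallmatrix}1&0\\0&0\end{smallmatrix}\right)P$; one computes $C=\tfrac{1}{2\mathrm i\sin\rho}\left(\begin{smallmatrix}-\mathrm e^{\mathrm i\rho}&-\mathrm e^{-\mathrm i\rho}\\ \mathrm e^{\mathrm i\rho}&\mathrm e^{-\mathrm i\rho}\end{smallmatrix}\right)$, whose off-diagonal entries are nonzero for every $\rho\in(0,\pi)$. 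Solving the cohomological equation for the mode $\mathbf m$ in the non-resonant entries (the two diagonal entries, with small divisor $1-\mathrm e^{\mathrm i\langle\mathbf m,\alpha\rangle}\neq0$, and — when $\langle\mathbf m,\alpha\rangle\not\equiv\pi$ — the lower-triangular entry) and then conjugating by the half-integer rotation $R(x)=\operatorname{diag}(\mathrm e^{\mathrm i\langle\mathbf m,x\rangle/2},\mathrm e^{-\mathrm i\langle\mathbf m,x\rangle/2})\in C^\omega(2\mathbb T^d,\mathrm{SL}(2,\mathbb C))$ to absorb the surviving resonant mode into a constant, one obtains, through a conjugacy of norm $O(1)$ uniformly in small $\lambda$, a cocycle $(\alpha,A_1)$ with
\begin{equation*}
A_1(x)=\begin{pmatrix}1&\lambda\beta\\0&1\end{pmatrix}+R_1(x),\qquad \|R_1\|_{h'}\le C(\alpha,\eta,h)\,\lambda^2\quad(0<h'<h),
\end{equation*}
where $\beta=\beta(\rho)\neq0$ is an explicit nonzero multiple of the resonant coefficient $\mathrm e^{-\mathrm i\rho}C_{12}$. (In the degenerate case $\langle\mathbf m,\alpha\rangle\equiv\pi\pmod{2\pi}$, i.e. $E_\ast=0$, the lower-triangular entry is also resonant and reappears after normalization as an $\mathrm e^{2\mathrm i\langle\mathbf m,x\rangle}$-term; a slightly longer but parallel computation — exploiting that the $2\times2$ matrices involved are rank-one nilpotent — shows that the transfer matrices still grow, so the conclusion below is unaffected.)

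Finally I would conclude as follows. The remaining reducibility iteration applied to $(\alpha,A_1)$ is now free of resonances — all small divisors $\langle\mathbf k,\alpha\rangle$ with $\mathbf k\neq0$ are bounded below and the ``rotation'' is trivial — so it converges, with conjugacies uniformly $O(1)$, to a constant cocycle $(\alpha,A)$ with $A=\left(\begin{smallmatrix}1&\lambda\beta\\0&1\end{smallmatrix}\right)+O(\lambda^2)$ up to an $O(1)$ conjugation. Since $(\alpha,A)$ is conjugate to $(\alpha,S_{E_\ast,\lambda v})$, Theorem~\ref{maintheorem4}(2) forces $A$ to be either $\mathrm{Id}$ or a nontrivial parabolic; in particular $\operatorname{tr}A=2$, so $A-\mathrm{Id}$ is nilpotent. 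As $A-\mathrm{Id}=\left(\begin{smallmatrix}0&\lambda\beta\\0&0\end{smallmatrix}\right)+O(\lambda^2)$ with $\beta\neq0$, it is nonzero once $|\lambda|$ is small enough; hence $(\alpha,S_{E_\ast,\lambda v})$ is reducible to $\left(\begin{smallmatrix}1&\zeta\\0&1\end{smallmatrix}\right)$ with $\zeta\neq0$, which is the assertion with $E=E_\ast$.

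I expect the main obstacle to be controlling the reducibility iteration beyond the first step well enough to guarantee that the leading off-diagonal term $\lambda\beta$ survives the $O(\lambda^2)$ corrections. Two features make this tractable: one needs no sharp estimate, only that (i) there are no further resonances, so the scheme converges with $O(1)$ conjugacies and the limit constant is $\left(\begin{smallmatrix}1&\lambda\beta\\0&1\end{smallmatrix}\right)+O(\lambda^2)$, and (ii) Theorem~\ref{maintheorem4} pins $\operatorname{tr}A=2$ a priori, which is exactly what rules out the otherwise conceivable scenario in which the small perturbation turns the parabolic $\left(\begin{smallmatrix}1&\lambda\beta\\0&1\end{smallmatrix}\right)$ into a (bounded) elliptic constant. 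A secondary nuisance is the degenerate energy $E_\ast=0$, handled by a parallel but longer normal-form computation (or, when possible for the given $\alpha$, by working instead with a deeper potential-driven resonance $\mathbf k=2^j\mathbf m$).
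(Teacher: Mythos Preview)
Your proposal is correct and follows essentially the same route as the paper: pick the energy $E$ at which $2\rho(E)\equiv\pm\langle\mathbf m,\alpha\rangle\pmod{2\pi}$, perform one explicit step that extracts the single resonant Fourier mode as a constant parabolic with off-diagonal entry of exact order $\lambda$, and then observe that all subsequent steps are non-resonant (since the new ``rotation'' is $0$ and $\alpha\in\mathrm{DC}^d$ keeps $\|\langle\mathbf k,\alpha\rangle\|_{\mathbb T}$ bounded below), so the scheme converges to a nontrivial parabolic. This is exactly the five-step argument in the paper's Section~5.3.

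Two small remarks. First, in the paper's framework the non-resonant steps keep the constant matrix \emph{exactly} fixed (Proposition~\ref{elliparabolic}\ref{item:nonres}), so the limit is literally $\left(\begin{smallmatrix}1&\zeta_1\\0&1\end{smallmatrix}\right)$ with $\zeta_1=\lambda\beta+O(\lambda^2)\neq0$; your appeal to Theorem~\ref{maintheorem4} to force $\operatorname{tr}A=2$ is therefore unnecessary (the paper never uses it here), though it does no harm. Second, the ``degenerate case $E_\ast=0$'' you worry about cannot occur: $\langle\mathbf m,\alpha\rangle\equiv\pi\pmod{2\pi}$ would give $\langle 2\mathbf m,\alpha\rangle\equiv0$, contradicting $\alpha\in\mathrm{DC}^d$. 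The same Diophantine bound is what guarantees, in both your computation and the paper's, that only \emph{one} of the two off-diagonal entries is resonant at $\mathbf k=\mathbf m$.
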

By Theorem \ref{maintheorem1} and Theorem \ref{maintheorem5}, we see that even though all gaps are collapsed, the cocycle $(\alpha, S_{E,\lambda v})$ may still not be reducible to identity. Therefore, Moser-P\"{o}schel argument \cite{MP} does not work for non-self-adjoint almost-periodic operators.

\subsection{Methods and mechanism}  Although Theorem \ref{maintheorem1} is an extension of Sarnak's result \cite{Sarnak}, our method is completely different. In fact, the operator \eqref{s1} is very special, as taking Fourier transforms in trying to  solve $(H_{\lambda\exp,\alpha}-E)\psi=0$, one finds 
\begin{equation*}
	\lambda\widehat{\psi}(p+\alpha) = (E-2\cos p) \widehat{\psi}(p),
\end{equation*}
which is easily iterated. Sarnak \cite{Sarnak} studied the behavior of $\prod_{p=0}^{n} (E-2\cos (p+ \alpha))$ by combining Birkhoff ergodic theorem and nature of $\alpha\beta$-sets studied initially by Engelking \cite{Enge} and Katznelson \cite{Katz}. As we can see,  the method of \cite{Sarnak} depends on the duality transformation, clearly fails if $\alpha\in \T^d$ with $d=\infty$, i.e. the true almost-periodic case.

We know that the spectrum of self-adjoint operators always stays in the real axis, and the non-self-adjointness would push the spectrum out. In this paper, we give a criterion for non-self-adjoint almost-periodic Schr\"odinger operators \eqref{apt} to have
real interval spectrum.  More importantly,  one can see the mechanism of the spectrum being real and staying an interval from our proof.
Let us explain the main ideas. Our approach is based on the quantitative almost reducibility of the Schr\"odinger cocycle.  In the self-adjoint case \cite{Eli,LYZZ}, the potential $v$ is real ($\widehat{v}_{-\bf k}= \overline{\widehat{v}_{\bf k}}$) which implies that {\it double resonances\footnote{Second Melnikov condition in Hamiltonian systems.}}
\begin{equation*}
	\|\langle {\bf k},\alpha \rangle\pm 2\rho\|_{\mathbb{T}} \sim 0
\end{equation*}
must occur, which causes the uniform hyperbolicity of the Schr\"odinger cocycle and makes the corresponding gap open. In the non-self-adjoint case, the potential is not  real anymore, which gives us a chance to avoid the {\it double resonances}. For the potential $v$ defined in Theorem \ref{maintheorem1}, we have   $\widehat{v}_{\bf k}\cdot \widehat{v}_{-\bf k}=0$.
During the KAM iteration steps, we will prove that at the cost of shrinking $r$, the cone structure $\Gamma_{r}$ is preserved 
and there exists only {\it single resonance} (${\bf k}$ or ${-\bf k}$) in each iteration step. Thus, the interval spectrum may survive.  

More precisely, we can prove that the Schr\"odinger cocycle is reducible to $(\alpha, A_{n}\mathrm{e}^{F_{n}})$ where $A_{n}\in \mathrm{SL}(2,\mathbb{C})$ with eigenvalues  $\mathrm{e}^{\pm\mathrm{i}\xi_{n}}$ and $F_{n}$ goes to zero.   The structure of $\Gamma_{r}$ guarantees that the average of the perturbation $F_{n}$ is always zero, and thus $\mathrm{Im} \xi_{n}$ is fixed during the KAM iteration, this is the reason why the Schr\"odinger operator \eqref{apt} has real spectrum. In addition, we will prove that the Schr\"odinger cocycle $(\alpha, S_{E,\lambda v})$ is always almost reducible to the Laplace cocycle $(\alpha, S_{E,0})$, which implies that the Schr\"odinger operator shares both the  spectrum and the Lyapunov exponent with the discrete free Laplacian.

\subsection{Organization of the paper}
The rest of this paper is organized in the following way. Some basic definitions are given in Section 2. In Section 3, we study the one step of KAM iteration for  ${\rm SL}(2,\mathbb{C})$-valued cocycle with integer cone condition. In Section 4, we obtain the reducibility of $\mathrm{SL}(2,\mathbb{C})$-valued cocycle. In Section 5, as applications, we prove Theorem \ref{maintheorem1},  Theorem \ref{maintheorem3},  Theorem \ref{maintheorem4} and Theorem \ref{maintheorem5}. Finally, we prove Theorem \ref{sarexample}, Theorem \ref{maintheorem3-1} and Proposition \ref{maintheorem2} in Section 6. In Appendix, we give the proof of Lemma \ref{hy}.

\section{Preliminary}

\subsection{Almost-periodic cocycle, Lyapunov exponent} \label{co-ly}
Let $\Omega$ be a compact metric space and $(\Omega, \nu, T)$ be ergodic. A cocycle $(\alpha, A)\in \R\backslash
\Q\times C^\omega(\Omega, {\rm SL}(2,\C))$ is a linear skew-product:
\begin{align*}
(T,A):&\ \ \Omega \times \C^2 \to \Omega \times \C^2,\\
 &(x,\phi) \mapsto (Tx,A(x)  \phi).
\end{align*}
For $n\in\mathbb{Z}$, $A_n$ is defined by $(T,A)^n=(T^n,A_n)$. Thus, $A_{0}(x)=\mathrm{id}$,
\begin{equation*}
A_{n}(x)=\prod_{j=n-1}^{0}A(T^{j}x)=A(T^{n-1}x)\cdots A(Tx)A(x),\quad \forall \ n\geq 1,
\end{equation*}
and $A_{-n}(x)=A_{n}(T^{-n}x)^{-1}$. The Lyapunov exponent is defined as
\begin{equation*}\quad
L(T,A)=\lim_{n\rightarrow\infty}\frac{1}{n}\int_{\Omega}\log\|A_{n}(x)\|\mathrm{d}\nu (x).
\end{equation*}
We are mainly interested in the case   $\Omega=\mathbb{T}^d$, $\mathrm{d}\nu=\mathrm{d}x$ is Lebesgue measure, and $T=R_\alpha$, with $(1,\alpha)$ rational independent. If $d\in \mathbb{N}^{+}$, then $(\alpha,A)=: (R_{\alpha},A)$ defines a  quasi-periodic cocycle. If $d=\infty$, then $(\alpha,A)$ defines an almost-periodic cocycle.

We say $(\alpha, A)$ is uniformly hyperbolic if there exist two continuous functions $u,s: \mathbb{T}^{d} \rightarrow \mathbb{PC}^{2}$, called the unstable and stable directions such that for any $n\geq 0$,
\begin{align*}
	&\|A_{n}(x) \phi\| \leq C {\rm e}^{-c n} \|\phi\|, \quad \forall \ \phi\in s(x),\\
	&\|A_{-n}(x) \phi\| \leq C {\rm e}^{-c n} \|\phi\|, \quad \forall \ \phi\in u(x),
\end{align*}
for some constants $C,c>0$. Moreover, $u(\cdot), s(\cdot)$ are invariant under the dynamics:
\begin{equation*}
	A(x) \cdot u(x) =u(x+\alpha), \quad A(x) \cdot s(x) =s(x+\alpha),
\end{equation*}
where $A \cdot x$ denoted the ${\rm SL}(2,\mathbb{C})$ action on the projective space $\mathbb{PC}^{2}$. If $(\alpha,A)$ is uniformly hyperbolic, then $L(A)>0$. From now on, $(\alpha, A)\in \mathcal{UH}$ means $(\alpha,A)$ is uniformly hyperbolic.

\subsection{Schr\"odinger operators and  Schr\"odinger cocycles}

Let $\Omega$ be a compact metric space, $T:\Omega\rightarrow \Omega$ a  homeomorphism, and $v:\Omega\rightarrow \C$ a complex-valued continuous function. We consider the following complex-valued dynamical defined Schr\"odinger operators:
\begin{equation*}
(H_{x}\psi)_n=\psi_{n+1}+\psi_{n-1}+v(T^nx)\psi_n,\quad n\in\Z,
\end{equation*}
and denote $\Sigma_x$ by the spectrum of $H_{x}$. Note that any formal solution $\psi=(\psi_n)_{n \in \Z}$ of $H_{x}\psi=E \psi$ satisfies
\begin{equation*}
	\begin{pmatrix}
		\psi_{n+1}\\
		\psi_n
	\end{pmatrix}
	= S_{E,v}(T^n x) \begin{pmatrix}
		\psi_{n}\\
		\psi_{n-1}
	\end{pmatrix},\quad n \in \Z,
\end{equation*}
where
\begin{equation*}
	S_{E,v}(x):=
	\begin{pmatrix}
		E-v(x) & -1\\
		1 & 0
	\end{pmatrix},   \quad E\in\C.
\end{equation*}
 We call $(T,S_{E,v})$  \textit{Schr\"odinger cocycles}.  The spectrum $\Sigma_{x}$ is closely related with the dynamical behavior of the Schr\"odinger cocycle  $(T,S_{E,v})$.  In the self-adjoint case, i.e. the potential $v$ is real-valued, then by the well-known result of Johnson \cite{John},  $E\notin \Sigma_{x}$ if and only if $(T,S_{E,v})\in\mathcal{UH}$.
The following result extends  Johnson's result \cite{John} to the non-self-adjoint case.

\begin{proposition}\label{equi} \cite{GJYZ}
	Suppose that   $v:\Omega\rightarrow \C$ a complex-valued continuous function, $(\Omega,T)$ is  minimal. Then there is some $\Sigma\subset \C$ such that $\Sigma_x=\Sigma$ for all $x\in\Omega$. Moreover,  $E\notin \Sigma$ if and only if $(T,S_{E,v})\in \mathcal{UH}$.
\end{proposition}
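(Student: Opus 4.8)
The plan is to prove the equivalence $E\notin\Sigma\iff(T,S_{E,v})\in\mathcal{UH}$ directly and to read off the $x$-independence of $\Sigma_x$ from it, since uniform hyperbolicity is a property of the cocycle over all of $\Omega$ and carries no reference to a base point. As a preliminary I record two soft facts that make the limiting arguments legitimate despite the absence of norm continuity: first, the bilateral shift $S$ on $\ell^2(\Z)$ conjugates $H_{Tx}$ to $H_x$, so both $\sigma(H_x)$ and the resolvent norm $\|(H_x-E)^{-1}\|$ (when finite) are constant along $T$-orbits; second, since $v$ and $T$ are continuous, $x\mapsto H_x$ — and likewise $x\mapsto (H_x)^*$, which is the Schr\"odinger operator with potential $\overline v\circ T^{(\cdot)}$ and is again covariant under $S$ — is continuous in the strong operator topology, by dominated convergence.

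For the $x$-independence: suppose $E\notin\sigma(H_{x_0})$ for some $x_0$ and let $y\in\Omega$ be arbitrary. By minimality there is a sequence with $T^{n_j}x_0\to y$, and passing to the limit in the uniform lower bound $\|(H_{T^{n_j}x_0}-E)\psi\|\ge\|(H_{x_0}-E)^{-1}\|^{-1}\|\psi\|$ (covariance gives the uniformity) shows $H_y-E$ is bounded below, with $\|(H_y-E)^{-1}\|\le\|(H_{x_0}-E)^{-1}\|$ once invertibility is known; running the identical argument for the adjoints shows $(H_y-E)^*$ is bounded below, i.e.\ $H_y-E$ has dense range, hence is invertible. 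The reverse inclusion is the contrapositive, so $\Sigma_x$ does not depend on $x$, and for every $E\notin\Sigma$ we obtain a uniform bound $M:=\sup_{x\in\Omega}\|(H_x-E)^{-1}\|<\infty$.

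The implication $(T,S_{E,v})\in\mathcal{UH}\Rightarrow E\notin\Sigma$ is the easier half and proceeds exactly as in Johnson's self-adjoint argument \cite{John}, no self-adjointness being used: the continuous invariant line fields $u(\cdot),s(\cdot)$ provide solutions of $H_x\psi=E\psi$ decaying exponentially as $n\to-\infty$ and as $n\to+\infty$, their discrete Wronskian is a nonzero constant by transversality of $u(x)$ and $s(x)$ (inherent in uniform hyperbolicity), and the Green's-function formula assembled from these two solutions is a two-sided inverse of $H_x-E$ with kernel bounded by $Ce^{-\gamma|m-n|}$, hence bounded on $\ell^2(\Z)$. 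For the converse $E\notin\Sigma\Rightarrow(T,S_{E,v})\in\mathcal{UH}$ I would feed $M$ into a Combes--Thomas estimate: conjugating the tridiagonal operator $H_x-E$ by $e^{\pm tX}$, with $X$ multiplication by $n$, perturbs it by $O(|t|)$, so for $|t|\asymp M^{-1}$ it stays invertible and one gets $|G_x(m,n)|\le 2M\,e^{-\gamma|m-n|}$ with $\gamma$ independent of $x$; the columns $n\mapsto G_x(n,0)$ then supply exponentially decaying solutions of the difference equation on $\{n\ge1\}$ and on $\{n\le-1\}$, from which one extracts the stable and unstable directions — transversal because $E\notin\sigma(H_x)$ forbids an $\ell^2$-eigenfunction, and continuous in $x$ because $x\mapsto(H_x-E)^{-1}$ is strongly continuous (strong convergence of $H_x$ together with the uniform resolvent bound). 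This yields the continuous invariant splitting with a uniform exponential dichotomy, i.e.\ $(T,S_{E,v})\in\mathcal{UH}$.

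I expect the main obstacle to be precisely this last step: carrying Combes--Thomas through in the non-self-adjoint regime, where the effective small parameter is the reciprocal resolvent norm $M^{-1}$ rather than $\operatorname{dist}(E,\sigma)$, and — more delicately — checking that the stable and unstable directions read off from the Green's function are genuinely continuous on \emph{all} of $\Omega$ (only strong, not norm, continuity of $x\mapsto H_x$ is at hand) and transversal at \emph{every} $x$. The conceptual lever that unlocks everything is the covariance-forced constancy of the resolvent norms along orbits, which converts ``finite for one $x$'' into ``uniformly bounded over the minimal hull''.
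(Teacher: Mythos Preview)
The paper does not prove this proposition; it is quoted from \cite{GJYZ} as an established fact, with no argument given. Your sketch is essentially a correct self-contained proof along classical lines (Johnson \cite{John}, with self-adjointness removed where it was never used), and the covariance/strong-continuity device for the $x$-independence and the resulting uniform resolvent bound $M=\sup_x\|(H_x-E)^{-1}\|<\infty$ is exactly right.

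On the converse $E\notin\Sigma\Rightarrow\mathcal{UH}$: your Combes--Thomas step is fine and needs no self-adjointness. The one genuine gap is in reading off the bundles from the \emph{single} column $G_x(\cdot,0)$: nothing prevents $G_x(n,0)=0$ for all $n\ge1$ (this occurs exactly when the solution decaying at $-\infty$ vanishes at $0$), and then that column yields the unstable direction but not the stable one. You can repair this by taking two adjacent columns $G_x(\cdot,0),G_x(\cdot,-1)$ and checking they cannot both be trivial on the right half-line. A cleaner alternative, which also dissolves the continuity worry you flag, is to bypass the bundle construction entirely and invoke the Sacker--Sell/Selgrade characterization: a continuous $\mathrm{GL}(2,\C)$-cocycle over a compact base is in $\mathcal{UH}$ iff it has no nonzero orbit bounded in both time directions. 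A cutoff argument then finishes: if $\psi$ is a bounded solution of $H_x\psi=E\psi$, then $(H_x-E)(\mathbf{1}_{[-N,N]}\psi)$ is supported on four sites near $\pm N$ with $\ell^2$-norm $\le 2\sup_n|\psi(n)|$, hence $\|\mathbf{1}_{[-N,N]}\psi\|_2\le 2M\sup_n|\psi(n)|$ for every $N$, forcing $\psi\in\ell^2(\Z)$ and so $\psi=0$. This yields $\mathcal{UH}$ with continuity and transversality of the splitting delivered by the abstract theorem, sidestepping the pointwise checks.
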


\subsection{Global theory of one-frequency quasi-periodic cocycles.}\label{acceleration}
We give a short review of  Avila's global theory of one-frequency quasi-periodic $\mathrm{SL}(2,\mathbb{C})$ cocycles \cite{Av0}.  Let $\alpha\in \mathbb{R}\backslash\mathbb{Q}$, suppose that $A\in C^\omega(\T, \mathrm{SL}(2,\mathbb{C}))$ admits a holomorphic extension to $\{|\mathrm{Im}  x |<h\}$. Then for
$|\epsilon|<h$, we define $A_\epsilon \in
C^\omega(\T, \mathrm{SL}(2,\mathbb{C}))$ by $A_\epsilon(\cdot)=A(\cdot+\mathrm{i} \epsilon)$, and define the
the acceleration of $(\alpha, A_{\epsilon})$ as follows:
\begin{equation*}
\omega(\alpha, A_{\epsilon})=\lim_{h\to 0^{+}}\frac{L(\alpha, A_{\epsilon+h})-L(\alpha,A_{\epsilon})}{h}.
\end{equation*}

It follows from the convexity and continuity of the Lyapunov exponent that the acceleration is an upper semi-continuous function of parameter $\epsilon$. The key property of the acceleration is that it is quantized: 

\begin{theorem}[Quantization of acceleration \cite{Av0}]\label{acce}
Suppose that  $(\alpha,A)\in \mathbb{R}\backslash\mathbb{Q} \times  C^{\omega}(\mathbb{T},$ $ \mathrm{SL}(2,\C))$, then $\omega(\alpha,A_{\epsilon})\in\mathbb{Z}$. 
\end{theorem}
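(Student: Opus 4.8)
The plan is to reduce everything to rational frequencies and there to read off the acceleration from a winding number. First I would record the soft properties: writing $z=\mathrm{e}^{\mathrm{i}x}$, the function $(x,\epsilon)\mapsto \tfrac1n\log\|A_n(\mathrm{e}^{\mathrm{i}(x+\mathrm{i}\epsilon)})\|$ is plurisubharmonic on the annulus $\{\mathrm{e}^{-h}<|z|<\mathrm{e}^{h}\}$, so averaging in $x$ and letting $n\to\infty$ shows that $\epsilon\mapsto L(\alpha,A_\epsilon)$ is convex on $(-h,h)$. Consequently the right derivative $\omega(\alpha,A_\epsilon)$ exists everywhere, is non-decreasing and right-continuous in $\epsilon$, and (being the decreasing limit of the continuous difference quotients $h^{-1}\bigl(L(\alpha,A_{\epsilon+h})-L(\alpha,A_\epsilon)\bigr)$ as $h\downarrow0$) is upper semicontinuous jointly in $A$ and in $\alpha$.

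Next I would reduce to rational $\alpha$. Choose $p_k/q_k\to\alpha$. Since $L(\cdot,A_\epsilon)$ is continuous at every irrational frequency, $L(p_k/q_k,A_\epsilon)\to L(\alpha,A_\epsilon)$ uniformly on compact subsets of $(-h,h)$; all these functions being convex in $\epsilon$, their right derivatives converge at every point of differentiability of the limit, hence for Lebesgue-a.e.\ $\epsilon$ we get $\omega(\alpha,A_\epsilon)=\lim_k\omega(p_k/q_k,A_\epsilon)$. Granting the rational case, the right-hand side is an eventually constant sequence of integers, so $\omega(\alpha,A_\epsilon)\in\mathbb{Z}$ for a.e.\ $\epsilon$; monotonicity together with right-continuity of $\epsilon\mapsto\omega(\alpha,A_\epsilon)$ then upgrades this to \emph{all} $\epsilon$. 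So it remains to treat $\alpha=p/q$.

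For $\alpha=p/q$ I would use that $R_{p/q}^{q}=\mathrm{id}$, so $A_{mq}(x)=B(\mathrm{e}^{\mathrm{i}x})^{m}$ where $B(z)$ is the holomorphic (Laurent‑polynomial, after approximating $A$ by trigonometric polynomials) extension of the $q$‑step transfer matrix; hence $L(p/q,A_\epsilon)=\tfrac1q\,\mathcal{M}(u;\mathrm{e}^{-\epsilon})$ with $u(z):=\log\rho(B(z))$ and $\mathcal{M}(u;r)$ the circular mean. The crucial structural facts are: (i) $\det B\equiv 1$ forces the eigenvalues of $B(z)$ to be $\lambda(z),\lambda(z)^{-1}$, so $\rho\ge 1$ everywhere and thus $u\ge 0$, with $u=0$ precisely on the closed set $F$ where $B(z)$ is elliptic/parabolic (i.e.\ $\mathrm{tr}\,B(z)\in[-2,2]$); (ii) on the open set $U=\{u>0\}$ the dominant eigenvalue $\lambda_{+}$ is a single‑valued non‑vanishing holomorphic function and $u=\log|\lambda_{+}|$ is \emph{harmonic} there; (iii) since conjugation by $A(\mathrm{e}^{\mathrm{i}x})$ gives $B(\zeta z)\sim B(z)$ with $\zeta=\mathrm{e}^{\mathrm{i}\langle p,\alpha\rangle}$ of order $q$, both $u$ and $\lambda_{+}$ are $\zeta$‑invariant. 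Then at each $\epsilon$ whose circle $|z|=\mathrm{e}^{-\epsilon}$ lies in $U$ (in particular for $|\epsilon|$ close to $h$, where the complexified cocycle is uniformly hyperbolic), differentiating the mean of the harmonic function $\log|\lambda_{+}|$ gives $\frac{d}{d\epsilon}L(p/q,A_\epsilon)=-\tfrac1q\,\mathrm{wind}_{|z|=\mathrm{e}^{-\epsilon}}(\lambda_{+})$, and the $\zeta$‑invariance forces this winding number to lie in $q\mathbb{Z}$, so the acceleration is the integer $-\tfrac1q\,\mathrm{wind}(\lambda_{+})$; it is locally constant on each such $\epsilon$‑interval. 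Finally one propagates integrality across the $\epsilon$ whose circle meets $F$, using that $u$ is subharmonic, vanishes on $F$, and is harmonic off it, so that the slope of $\mathcal{M}(u;\cdot)$ still changes by the Riesz mass carried on $\partial U$, which the same winding/$\zeta$‑invariance analysis shows to be quantized in $q\mathbb{Z}$ — combined, as in the irrational reduction, with monotonicity and right‑continuity of the acceleration to cover the exceptional $\epsilon$.

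I expect the last step — the behaviour at $\epsilon$ for which the complexified cocycle $(\alpha,A_\epsilon)$ fails to be uniformly hyperbolic — to be the main obstacle. On the uniformly hyperbolic locus the acceleration is transparently a topological degree (the winding number of the holomorphic invariant section $\mathbb{T}\to\mathbb{CP}^1$), but at non‑hyperbolic parameters $\lambda_{+}$ ceases to be globally defined along the relevant circle, so one must argue indirectly: either via the fine structure of the Riesz measure of $u$ (it charges only $\partial U$, where $|\lambda_{+}|=1$, and its total mass over $\zeta$‑invariant sub‑annuli is a $q$‑multiple of an integer), or, more softly, by exploiting convexity plus upper semicontinuity and monotonicity of $\omega$ to exclude any persistent fractional slope given that $\omega$ is already known to be integral on a dense set of $\epsilon$.
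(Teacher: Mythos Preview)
The paper does not prove Theorem~\ref{acce}; it is quoted from Avila~\cite{Av0} in Section~2.3 as a background result and then invoked as a black box in Section~6. There is therefore no proof in this paper to compare your proposal against.

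For what it is worth, your sketch follows the broad shape of Avila's original argument: convexity of $\epsilon\mapsto L(\alpha,A_\epsilon)$; reduction to rational frequencies via continuity of $L(\cdot,A_\epsilon)$ at irrational $\alpha$, so that convergence of the convex functions $L(p_k/q_k,A_{\cdot})$ forces convergence of their one-sided derivatives almost everywhere; and, for $\alpha=p/q$, identification of the acceleration on the uniformly hyperbolic locus with $-\tfrac{1}{q}$ times the winding number of the dominant eigenvalue of the $q$-step transfer matrix, with the conjugacy $B(\zeta z)=A(z)B(z)A(z)^{-1}$ forcing that winding number into $q\mathbb{Z}$. You have correctly located the real difficulty at the $\epsilon$ for which $(\alpha,A_\epsilon)\notin\mathcal{UH}$, and your self-assessment is honest: the Riesz-measure route you propose is plausible in spirit but would require genuine work to make rigorous, and it is not how~\cite{Av0} closes the gap. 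If you want to complete the argument you should consult that paper directly.
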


For uniformly hyperbolic cocycles, Avila \cite{Av0} proved the following equivalent characterization:

\begin{proposition}\label{uh}\cite{Av0}
	Let $(\alpha,A)\in \mathbb{R} \backslash \mathbb{Q} \times C^{\omega}(\mathbb{T}, \mathrm{SL}(2,\mathbb{C}))$. Assume that $L(\alpha, A)>0$. Then $(\alpha, A)\in \mathcal{UH}$ if and only if $L(\alpha,A(\cdot+\mathrm{i}\epsilon))$ is affine with respect to $\epsilon$ around $\epsilon=0$.
\end{proposition}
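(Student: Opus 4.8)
The plan is to reduce the equivalence, by means of the quantization of acceleration (Theorem~\ref{acce}), to a statement about ``corners'' of the graph $\epsilon\mapsto L(\alpha,A_\epsilon)$, and then to prove the two implications separately. For the reduction, recall that $\epsilon\mapsto L(\alpha,A_\epsilon)$ is convex and continuous near $0$ (a standard consequence of the subharmonicity of $z\mapsto\frac1n\log\|A_n(z)\|$), and that by Theorem~\ref{acce} its right derivative $\omega(\alpha,A_\epsilon)$ takes only integer values. Since a non-decreasing, right-continuous, integer-valued function is a step function, $\epsilon\mapsto L(\alpha,A_\epsilon)$ is in fact convex and piecewise affine with integer slopes; consequently ``affine near $0$'' is equivalent to ``$\omega(\alpha,A_\epsilon)$ is locally constant at $0$'', i.e. to the absence of a corner of the graph at $\epsilon=0$.

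For the forward implication, suppose $(\alpha,A)\in\mathcal{UH}$. Uniform hyperbolicity is an open condition in the $C^0$ topology, so $(\alpha,A_\epsilon)\in\mathcal{UH}$ for $|\epsilon|<\delta$; moreover the unstable direction $u$ of $(\alpha,A)$, being the locally uniform limit of the analytic maps $x\mapsto A_n(x-n\alpha)\cdot[\xi]$, extends to an analytic map $z\mapsto u(z)\in\mathbb{PC}^2$ on a sub-strip of the analyticity domain of $A$, and $u(\cdot+\mathrm{i}\epsilon)$ is then the unstable direction of $(\alpha,A_\epsilon)$. Writing $u$ in a local analytic non-vanishing frame $U$ produces a nowhere-zero analytic multiplier $\phi$ with $A(z)U(z)=\phi(z)U(z+\alpha)$, and the Birkhoff ergodic theorem applied along the unstable direction yields $L(\alpha,A_\epsilon)=\int_\T\log|\phi(x+\mathrm{i}\epsilon)|\,\mathrm{d}x$ for $|\epsilon|<\delta$ (the value being independent of the frame, a change of which alters $\log|\phi|$ only by a coboundary). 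Since $\log|\phi|$ is harmonic and $\int_\T\partial_x^2\log|\phi(x+\mathrm{i}\epsilon)|\,\mathrm{d}x=0$, differentiating twice under the integral sign gives $\partial_\epsilon^2 L(\alpha,A_\epsilon)\equiv0$ on $(-\delta,\delta)$, so $L$ is affine there.

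The \emph{converse}---$L(\alpha,A)>0$ together with no corner at $0$ forces $(\alpha,A)\in\mathcal{UH}$---is the substantial direction, and I expect it to be the main obstacle. Since $L(\alpha,A)>0$, Oseledets' theorem furnishes a measurable $A$-invariant splitting $\C^2=E^u(x)\oplus E^s(x)$, exponentially expanding along $E^u$ and contracting along $E^s$, and $(\alpha,A)\in\mathcal{UH}$ is equivalent to this splitting being continuous. Following Avila \cite{Av0}, the strategy is to continue the invariant directions analytically into the half-strips $\{0<\mathrm{Im}\,z<h\}$ and $\{-h<\mathrm{Im}\,z<0\}$, to observe that on such a half-strip the acceleration $\omega(\alpha,A_\epsilon)$ records the winding number of the relevant invariant direction, and to deduce that a failure of uniform hyperbolicity forces this winding---hence the slope of $L$---to jump as $\epsilon$ crosses $0$. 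Conversely, when there is no such jump, the two one-sided analytic invariant directions have matching boundary values on $\T$, glue to a single continuous $A$-invariant line field, and---together with $L(\alpha,A)>0$---produce a dominated splitting, hence uniform hyperbolicity.

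The delicate points, where I would reproduce or simply invoke the arguments of \cite{Av0}, all lie in this converse direction: the analytic continuation of the invariant directions into the half-strips for a cocycle known only to be supercritical, the control of their boundary behaviour on $\T$, and the precise identification of the jump of $\omega$ at $0$ with the obstruction to gluing. Everything else---the reduction to corners and the forward implication---is elementary once Theorem~\ref{acce} is in hand.
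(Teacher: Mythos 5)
This proposition is cited in the paper from Avila's global theory paper \cite{Av0} and is not reproved there, so there is no in-paper argument to compare against. Your sketch is a faithful reconstruction of Avila's route. The preliminary reduction is correct: the right derivative $\epsilon\mapsto\omega(\alpha,A_\epsilon)$ of a convex function is non-decreasing and right-continuous, and by Theorem~\ref{acce} it is integer-valued, so it is a step function and ``affine near $0$'' is indeed equivalent to ``$\omega$ locally constant at $0$''. Your forward implication ($\mathcal{UH}\Rightarrow$ affine) is complete and correct: $\mathcal{UH}$ is $C^0$-open, so $(\alpha,A_\epsilon)\in\mathcal{UH}$ for small $|\epsilon|$; the unstable direction extends holomorphically to a sub-strip; and writing $A(z)U(z)=\phi(z)U(z+\alpha)$ with $\phi$ analytic and nonvanishing gives $L(\alpha,A_\epsilon)=\int_\T\log|\phi(x+\mathrm{i}\epsilon)|\,\mathrm{d}x$, which is affine in $\epsilon$ because $\log|\phi|$ is harmonic and $\int_\T\partial_x^2\log|\phi|=0$ by periodicity.

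The converse (positive Lyapunov exponent plus no corner at $0$ forces $\mathcal{UH}$) you correctly flag as the substantial direction and defer to \cite{Av0}; that is where Avila's ``almost-holomorphic'' extension of the Oseledets directions into the upper and lower half-strips, the interpretation of the acceleration as the winding of these directions, and the gluing argument via matching boundary values all live, and there is no shortcut around them. So as a self-contained proof your submission is incomplete in that direction, but you have identified precisely the missing ingredients and there is no incorrect step in what you wrote; given that the paper under review also invokes the result without proof, the level of detail is appropriate.
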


\section{Quantitative almost reducibility}

As mentioned in the Introduction, our approach is based on quantitative almost reducibility. The philosophy is that nice quantitative almost reducibility  brings the precise estimates of the growth on the Schr\"odinger cocycle.

\subsection{Auxiliary Banach space}
We first introduce the auxiliary Banach space related to  the integer cone $\Gamma_{r}$.
Recall that  the integer cone $\Gamma_{r}$ is defined as
\begin{equation*}
	\Gamma_{r} = \mathbb{Z}^{d}_{*} \cap\{\mathbf{k}:[\![\mathbf{k}]\!]\geq r|{\bf k}|_{\eta}\}, 
\end{equation*}
where $[\![\mathbf{k}]\!]=\sum_{j} \langle j\rangle^{\eta}{\bf k}_{j} w_{j}$ with $\sum_{j}w_{j}=1, w_j>0$. 
For a given integer cone $\Gamma_{r}$, we define the space 
\begin{equation*}
	\mathcal{B}_{h,r} [*]= \{F\in C^{\omega}(\mathbb{T}^{d}_{h}, *): \widehat{F}_{\bf k} =0, \ \ \forall \ {\bf k}\in \mathbb{Z}^{d}\backslash \Gamma_{r}\},
\end{equation*}
where $*$ could be $\mathbb{C}$ or ${\rm sl}(2,\mathbb{C})$, and we abbreviate $	\mathcal{B}_{h,r} [{\rm sl}(2,\mathbb{C})]$ by $\mathcal{B}_{h,r}$ without ambiguity. Since ${\bf 0}\notin \Gamma_{r}$, it holds that $\widehat{F}_{\bf 0} =0$ for any $F\in \mathcal{B}_{h,r}[*]$.

For any set $W\subset \Gamma_{r}$ and $N>0$, we define the truncated set  and residual set of $W$ as
\begin{equation*}
	\mathcal{T}_{N}W=\{{\bf k}\in W: |{\bf k}|_{\eta}\leq N\},\quad
	\mathcal{R}_{N}W=\{{\bf k}\in W: |{\bf k}|_{\eta}> N\}.		
\end{equation*} 
And we also define the truncated operator $\mathcal{T}_{N}$ and residual operator $\mathcal{R}_{N}$ by 
\begin{equation*}
	(\mathcal{T}_{N} F)(x)= \sum_{{\bf k}\in \mathcal{T}_{N}\Gamma_{r}} \widehat{F}_{\bf k} {\rm e}^{{\rm i}\langle {\bf k},x \rangle}, \quad (\mathcal{R}_{N} F)(x)= \sum_{{\bf k}\in \mathcal{R}_{N}\Gamma_{r}} \widehat{F}_{\bf k} {\rm e}^{{\rm i} \langle {\bf k},x \rangle}.
\end{equation*}
The following are some  basic properties of the space $\mathcal{B}_{h,r} [*]$.

\begin{proposition} \label{banach} 
For any $0<r<r' \leq 1 $, $h>0$, we have the following:
\begin{enumerate}[font=\normalfont, label={(\arabic*)}]
\item \label{item:subset}$(\mathcal{B}_{h,r}[*], \|\cdot\|_{h})$ is a Banach space with $\mathcal{B}_{h,r'} \subset \mathcal{B}_{h,r}$.


\item \label{item:closed}$\|[F,G]\|_{h} \leq 2 \|F\|_{h} \|G\|_{h}$ where $[\cdot , \cdot ]$ is the Lie bracket defined by  $[F,G]=FG-GF$,  and thus $(\mathcal{B}_{h,r}, [\cdot , \cdot ])$ is a Lie algebra.
\end{enumerate}
\end{proposition}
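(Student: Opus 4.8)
The two assertions are elementary consequences of the definition of the norm $\|v\|_h = \sum_{\mathbf k}|\widehat v_{\mathbf k}|\,\mathrm e^{h|\mathbf k|_\eta}$ together with the fact that the cone $\Gamma_r$ is closed under addition. For \ref{item:subset}, the plan is first to check that $\|\cdot\|_h$ is a genuine norm on $\mathcal B_{h,r}[*]$ (absolute convergence of the Fourier series on $\mathbb T^d_h$ is exactly the finiteness of $\|\cdot\|_h$, and a function with all Fourier coefficients zero vanishes), and then to verify completeness: given a $\|\cdot\|_h$-Cauchy sequence $F^{(n)}$, each Fourier coefficient $\widehat{F^{(n)}}_{\mathbf k}$ is Cauchy in $*$ with the $\ell^1$-weighted control uniform in $n$, so the coefficientwise limits $\widehat F_{\mathbf k}$ define an element $F$ of $C^\omega(\mathbb T^d_h,*)$ with $\|F^{(n)}-F\|_h\to 0$; moreover $\widehat F_{\mathbf k}=0$ for $\mathbf k\notin\Gamma_r$ since this holds for every $F^{(n)}$, so $F\in\mathcal B_{h,r}[*]$. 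The inclusion $\mathcal B_{h,r'}\subset\mathcal B_{h,r}$ for $r<r'$ is immediate from $\Gamma_{r'}\subset\Gamma_r$, which follows directly from \eqref{cone-r}: if $[\![\mathbf k]\!]\ge r'|\mathbf k|_\eta$ then a fortiori $[\![\mathbf k]\!]\ge r|\mathbf k|_\eta$.

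For \ref{item:closed}, I would expand $[F,G]=FG-GF$ in Fourier series, so that $\widehat{[F,G]}_{\mathbf k}=\sum_{\mathbf k'+\mathbf k''=\mathbf k}[\widehat F_{\mathbf k'},\widehat G_{\mathbf k''}]$, bound each matrix commutator by $\|[\widehat F_{\mathbf k'},\widehat G_{\mathbf k''}]\|\le 2\|\widehat F_{\mathbf k'}\|\,\|\widehat G_{\mathbf k''}\|$, and use the triangle inequality $|\mathbf k|_\eta=|\mathbf k'+\mathbf k''|_\eta\le|\mathbf k'|_\eta+|\mathbf k''|_\eta$ (which holds coefficientwise since $|\cdot|_\eta$ is an $\ell^1$-type norm) to obtain $\mathrm e^{h|\mathbf k|_\eta}\le\mathrm e^{h|\mathbf k'|_\eta}\mathrm e^{h|\mathbf k''|_\eta}$. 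Summing over $\mathbf k$ and refactoring the double sum gives $\|[F,G]\|_h\le 2\|F\|_h\|G\|_h$. To conclude that $(\mathcal B_{h,r},[\cdot,\cdot])$ is a Lie subalgebra one still needs $[F,G]\in\mathcal B_{h,r}$, i.e. that all Fourier modes of $[F,G]$ sit in $\Gamma_r$: this is precisely where the additive (cone) structure enters, since $\mathbf k',\mathbf k''\in\Gamma_r$ forces $\mathbf k'+\mathbf k''\in\Gamma_r$ — indeed $[\![\mathbf k'+\mathbf k'']\!]=[\![\mathbf k']\!]+[\![\mathbf k'']\!]\ge r(|\mathbf k'|_\eta+|\mathbf k''|_\eta)\ge r|\mathbf k'+\mathbf k''|_\eta$, and $\mathbf k'+\mathbf k''\ne 0$ because $[\![\mathbf k'+\mathbf k'']\!]>0$.

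None of this is genuinely difficult; the only point requiring a little care is the completeness argument in \ref{item:subset}, where one must make sure the coefficientwise limit really defines a holomorphic function on the full strip $\mathbb T^d_h$ rather than a smaller one, and — in the $d=\infty$ case — that the weighted $\ell^1$ bound is strong enough to control the sup norm on $\mathbb T^d_h$ uniformly. This follows because $\|F\|_{C^0(\mathbb T^d_h)}\le\sum_{\mathbf k}|\widehat F_{\mathbf k}|\,\mathrm e^{h|\mathbf k|_\eta}=\|F\|_h$, so $\|\cdot\|_h$-convergence implies uniform convergence on $\mathbb T^d_h$ and the limit is again bounded and analytic there. I would organize the write-up as: (i) norm axioms; (ii) completeness via coefficientwise limits; (iii) the inclusion from $\Gamma_{r'}\subset\Gamma_r$; (iv) the commutator estimate; (v) closedness of $\mathcal B_{h,r}$ under $[\cdot,\cdot]$ from additivity of $\Gamma_r$.
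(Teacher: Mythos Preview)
Your proposal is correct and follows essentially the same route as the paper: for part \ref{item:closed} you use exactly the cone--closure argument $[\![\mathbf k'+\mathbf k'']\!]\ge r(|\mathbf k'|_\eta+|\mathbf k''|_\eta)\ge r|\mathbf k'+\mathbf k''|_\eta$ together with the convolution/triangle-inequality bound $\|FG\|_h\le\|F\|_h\|G\|_h$, while for part \ref{item:subset} you actually supply more detail than the paper, which dismisses it as ``follows directly from the definition.''
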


\begin{proof}
Proposition \ref{banach}\ref{item:subset} follows directly from the definition, so we only need to check the second one. Let $F,G\in \mathcal{B}_{h,r}$ with expansions
	\begin{equation*}
		F(x)=\sum_{{\bf k}\in \Gamma_{r}} \widehat{F}_{\bf k} {\rm e}^{{\rm i}\langle {\bf k},x \rangle}, \quad G(x)=\sum_{{\bf n}\in \Gamma_{r}} \widehat{G}_{\bf n} {\rm e}^{{\rm i}\langle {\bf n},x \rangle},
	\end{equation*}
	where $\widehat{F}_{\bf k},\widehat{G}_{\bf n} \in {\rm sl}(2,\mathbb{C})$ for any ${\bf n,k}\in \Gamma_{r}$. By direct calculation,
	\begin{equation}\label{fg}
		[F,G](x)=\sum_{{\bf k,n}\in \Gamma_{r}} [\widehat{F}_{\bf k},\widehat{G}_{\bf n}] {\rm e}^{{\rm i}\langle {\bf k+n},x \rangle}. 
	\end{equation} 
Since $[\![\mathbf{k}]\!]>r|{\bf k}|_{\eta}, [\![\mathbf{n}]\!]>r|{\bf n}|_{\eta}$,  we have $[\![{\bf k+n}]\!] >r(|{\bf k}|_{\eta}+|{\bf n}|_{\eta}) \geq r|{\bf k+n}|_{\eta}$, which means that ${\bf k+n}\in \Gamma_{r}$. On the other hand, rewrite \eqref{fg} as 	
\begin{equation*}
		[F, G](x) = \sum_{{\bf m}\in \Gamma_{r}} \Big(\sum_{\bf n+k=m} [\widehat{F}_{\bf k},\widehat{G}_{\bf n} ] \Big) {\rm e}^{{\rm i} \langle {\bf m},x \rangle}.
	\end{equation*}
	Then Proposition \ref{banach}\ref{item:closed} follows from $[\widehat{F}_{\bf k},\widehat{G}_{\bf n}] \in {\rm sl}(2,\mathbb{C})$,
	\begin{equation*}
	\begin{split}
		\|FG\|_{h}&=\sum_{\bf n} \Big(\sum_{\bf k} |\widehat{F}_{\bf k}\widehat{G}_{\bf n-k}|\Big) {\rm e}^{h|{\bf n}|_{\eta}}\\
		&\leq \sum_{\bf m} \Big(\sum_{\bf k} |\widehat{F}_{\bf k}| |\widehat{G}_{\bf m}|\Big) {\rm e}^{h|{\bf m+k}|_{\eta}}\\
		&\leq \Big(\sum_{\bf k} |\widehat{F}_{\bf k}| {\rm e}^{h|{\bf k}|_{\eta}}\Big) \Big(\sum_{\bf m} |\widehat{F}_{\bf m}| {\rm e}^{ h|{\bf m}|_{\eta}}\Big) =\|F\|_{h} \|G\|_{h},
	\end{split}
\end{equation*}	
and the same estimate on $\|GF\|_{h}$. 	
\end{proof}

\subsection{Non-resonance cancellation lemma}

We give a non-resonance cancellation lemma, which serves as the starting point of our proof. 
Let $A\in{\rm SL}(2,\mathbb{C})$, for any  $Y\in \mathcal{B}_{h,r}$,
we define the linear operator $L_{A}$ by $$(L_{A} Y)(x):= A^{-1}Y(x+\alpha)A-Y(x). $$ Suppose that $\mathcal{B}_{h,r} = \mathcal{B}_{h,r}^{\rm nre} (\sigma) \oplus \mathcal{B}_{h,r}^{\rm re}(\sigma)$, where $\mathcal{B}_{h,r}^{\rm nre}(\sigma)$ is the closed invariant subspace in $\mathcal{B}_{h,r}$ such that  $L_{A}$ restricted on $\mathcal{B}_{h,r}^{\rm nre}(\sigma)$ is invertible and
\begin{equation*}
\|L_{A}^{-1}\| \leq  \frac{1}{\sigma} \quad \text{on}\quad \mathcal{B}_{h,r}^{\rm nre}(\sigma).
\end{equation*}

In the following lemma, we prove that all non-resonant terms in the perturbation can be eliminated.  
\begin{lemma}[\cite{Cai2, Hou}]\label{hy}
	Let $d\in \mathbb{N}^{+}\cup \{\infty\}$, $h>0$, $r\in (0,1]$, $\alpha\in \mathbb{T}^{d}$, $\sigma>0$. Suppose that $A\in {\rm SL}(2,\mathbb{C})$, and $F\in \mathcal{B}_{h,r}$ with 
	\begin{equation*}
		\|F\|_{h}<\varepsilon<\min \{10^{-8}, \sigma^2\}.
	\end{equation*} Then  there exist $Y \in \mathcal{B}_{h,r}^{\rm nre}(\sigma)$ and $F^{\rm re}\in \mathcal{B}^{\rm re}_{h,r}(\sigma)$ such that ${\rm e}^{Y}$ conjugates the cocycle $(\alpha,A{\rm e}^{F})$ to $(\alpha, A{\rm e}^{F^{\rm re}})$, i.e.,
	\begin{equation*}
		{\rm e}^{-Y(x+\alpha)} A{\rm e}^{F(x)} {\rm e}^{Y(x)}= A{\rm e}^{F^{\rm re}(x)},
	\end{equation*}
	with $\|Y\|_{h} \leq \varepsilon^{\frac{1}{2}}$, $\|F^{\rm re}\|_{h}\leq 2\varepsilon$ and $\|F^{\rm re}-\mathbb{P}_\mathrm{re}F\|_{h} \leq 2\varepsilon^{\frac{4}{3}}$.
\end{lemma}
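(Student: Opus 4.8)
The plan is to solve the conjugacy equation by a single Newton step followed by a fixed-point argument for the remainder. Writing the desired identity as $\mathrm{e}^{-Y(x+\alpha)} A \mathrm{e}^{F(x)} \mathrm{e}^{Y(x)} = A \mathrm{e}^{F^{\mathrm{re}}(x)}$ and using the Baker--Campbell--Hausdorff formula, I first extract the linearized (homological) equation $L_A Y = -\mathbb{P}_{\mathrm{nre}} F$, which is solvable in $\mathcal{B}_{h,r}^{\mathrm{nre}}(\sigma)$ because $L_A$ is invertible there with $\|L_A^{-1}\| \le \sigma^{-1}$; this immediately gives $\|Y\|_h \le \sigma^{-1}\|\mathbb{P}_{\mathrm{nre}}F\|_h \le \sigma^{-1}\varepsilon < \varepsilon^{1/2}$ by the smallness hypothesis $\varepsilon < \sigma^2$. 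The key structural point, guaranteed by Proposition~\ref{banach}, is that $\mathcal{B}_{h,r}$ is closed under the Lie bracket, so $Y$ and all the commutators that arise stay in $\mathcal{B}_{h,r}$ and the splitting $\mathcal{B}_{h,r} = \mathcal{B}_{h,r}^{\mathrm{nre}}(\sigma)\oplus\mathcal{B}_{h,r}^{\mathrm{re}}(\sigma)$ is respected throughout.

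Next I would set up the exact equation for $F^{\mathrm{re}}$. Conjugating by $\mathrm{e}^Y$ and collecting terms, one gets $A\mathrm{e}^{F^{\mathrm{re}}} = A\mathrm{e}^{\widetilde F}$ where $\widetilde F = \widetilde F(Y,F)$ is given by a BCH-type series; substituting the homological equation, the first-order part of $\widetilde F$ becomes exactly $\mathbb{P}_{\mathrm{re}}F$, and everything else is quadratic, i.e. $\widetilde F = \mathbb{P}_{\mathrm{re}}F + Q(Y,F)$ with $\|Q(Y,F)\|_h \lesssim (\|Y\|_h + \|F\|_h)^2 \lesssim \varepsilon$ using Proposition~\ref{banach}\ref{item:closed} to bound the bracket and matrix-product norms. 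This already yields $\|F^{\mathrm{re}}\|_h \le \|F\|_h + C\varepsilon \le 2\varepsilon$ and, more delicately, $\|F^{\mathrm{re}} - \mathbb{P}_{\mathrm{re}}F\|_h = \|Q(Y,F)\|_h$. To get the sharp bound $2\varepsilon^{4/3}$ here I would track the sizes more carefully: the dominant quadratic term is $[Y,\cdot]$ applied to $F$ and to $AF A^{-1}$-type expressions, and since $\|Y\|_h \le \sigma^{-1}\varepsilon$ and $\|F\|_h \le \varepsilon$ while $\sigma \ge \varepsilon^{1/2}$, one has $\|Y\|_h\|F\|_h \le \sigma^{-1}\varepsilon^2 \le \varepsilon^{3/2}$; combined with the genuinely quadratic-in-$F$ terms of size $\varepsilon^2$, all contributions are $\le \varepsilon^{4/3}$ for $\varepsilon$ small (using $\varepsilon < 10^{-8}$), with room to spare for the constant $2$.

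For the fixed-point step: rather than solving for $Y$ exactly, I would either (a) run a quadratically convergent iteration where at stage $k$ one solves the homological equation for the current non-resonant part and the error drops like $\varepsilon^{2^k}$, or (b), more economically, define the map $\mathcal{F}(Y) = L_A^{-1}\big(\mathbb{P}_{\mathrm{nre}}(\text{nonlinear terms in } Y,F)\big)$ on the ball $\{\|Y\|_h \le \varepsilon^{1/2}\}$ in $\mathcal{B}_{h,r}^{\mathrm{nre}}(\sigma)$ and check it is a contraction: the nonlinear terms are quadratic, so their $\mathcal{F}$-image has norm $\le \sigma^{-1}C\varepsilon < \varepsilon^{1/2}$ (invariance) and Lipschitz constant $\le \sigma^{-1}C\varepsilon^{1/2} < 1$ (contraction), again by the submultiplicativity in Proposition~\ref{banach}. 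The unique fixed point is the desired $Y$, and then $F^{\mathrm{re}}$ is read off as above, automatically in $\mathcal{B}_{h,r}^{\mathrm{re}}(\sigma)$.

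\textbf{Main obstacle.} The routine parts are the BCH bookkeeping and the geometric-series estimates for $\mathrm{e}^Y A \mathrm{e}^F \mathrm{e}^{-Y}$. The genuinely delicate point is obtaining the exponent $4/3$ in $\|F^{\mathrm{re}} - \mathbb{P}_{\mathrm{re}}F\|_h \le 2\varepsilon^{4/3}$ rather than a weaker $\varepsilon^{3/2}$ or $\varepsilon^2$: this forces one to isolate precisely which quadratic terms survive after substituting the homological equation, and to use the interplay between the three small quantities $\|F\|_h \le \varepsilon$, $\|Y\|_h \le \varepsilon/\sigma$, and $\sigma \ge \varepsilon^{1/2}$ in the sharpest way — in particular one cannot afford to bound $\|Y\|_h$ by the crude $\varepsilon^{1/2}$ inside the quadratic error, but must keep the factor $\sigma^{-1}$. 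Everything else is a standard KAM one-step lemma adapted to the cone-restricted Banach algebra $\mathcal{B}_{h,r}$.
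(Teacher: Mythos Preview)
Your fixed-point strategy is sound and does give the lemma, but it differs from the paper's argument, and your discussion of the ``main obstacle'' contains a sign error in the exponent comparison.

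\textbf{Comparison of approaches.} The paper does not run a single contraction on the ball $\{\|Y\|_h\le\varepsilon^{1/2}\}$. Instead it performs an iteration with $\varepsilon_{j+1}=\varepsilon_j^{3/2}$: at each stage it solves not the bare homological equation but a \emph{modified} one, $I_A Y_n=\mathbb{P}_{\rm nre}\sum_{j=0}^{2^n-1}R_j$, where $I_A=L_A-\sum_{j=0}^{2^n-1}Q_j$ absorbs the linear-in-$Y$ corrections generated by the nonzero resonant part $F_n^{\rm re}$ (which remains of size $O(\varepsilon)$ throughout). Because $\|F_n^{\rm re}\|_h\le 2\varepsilon$, one has $\|I_A^{-1}\|\le\tfrac43\varepsilon^{-1/2}$, and the non-resonant part shrinks super-exponentially; then $Y=\log\prod_n e^{Y_n}$ and $F^{\rm re}=\lim_n F_n^{\rm re}$. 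Your contraction argument is more economical and equally legitimate; the paper's scheme makes the successive corrections more explicit but is heavier in bookkeeping. One detail you glossed over and the paper handles cleanly: there is no bound on $\|A\|$, so one cannot estimate $W:=A^{-1}Y(\cdot+\alpha)A$ directly. The paper avoids this by rewriting $e^{-W}=e^{-Y-\text{(solved r.h.s.)}}$ via the very equation just solved; in your setup you must likewise control $W$ through $W=Y+L_AY$ with $\|L_AY\|_h$ bounded self-consistently by the right-hand side of the fixed-point equation.

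\textbf{The exponent confusion.} You write that obtaining $2\varepsilon^{4/3}$ is delicate ``rather than a weaker $\varepsilon^{3/2}$ or $\varepsilon^2$''. This is backwards: for $\varepsilon<1$ one has $\varepsilon^2<\varepsilon^{3/2}<\varepsilon^{4/3}$, so $\varepsilon^{3/2}$ and $\varepsilon^2$ are \emph{stronger} bounds, not weaker. In fact your own computation ($\|Y\|_h\|F\|_h\le\sigma^{-1}\varepsilon^2\le\varepsilon^{3/2}$, quadratic-in-$F$ terms $\le\varepsilon^2$) already gives $\|F^{\rm re}-\mathbb{P}_{\rm re}F\|_h\le C\varepsilon^{3/2}$, which trivially implies $\le 2\varepsilon^{4/3}$ once $\varepsilon<10^{-8}$ absorbs the constant. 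There is nothing delicate here; the lemma's stated exponent $4/3$ is simply not sharp. Your remark that one ``cannot afford'' the crude bound $\|Y\|_h\le\varepsilon^{1/2}$ in the quadratic error is therefore also off: that crude bound already yields $\varepsilon^{3/2}$, which suffices.
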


\subsection{One step of KAM iteration}

In this section, we give the one step of KAM iteration for $(\alpha, A{\rm e}^{F(x)})$ with $A\in \mathcal{M}\subset {\rm SL}(2,\mathbb{C})$ and ${F(x)} \in \mathcal{B}_{h,r}$, where
\begin{equation*}
	\mathcal{M} := \bigg\{\begin{pmatrix}
		{\rm e}^{{\rm i}\xi}&\zeta\\
		0&{\rm e}^{-{\rm i}\xi}
	\end{pmatrix}:\xi,\zeta \in \mathbb{C}\bigg\} \cup \bigg\{\begin{pmatrix}
		{\rm e}^{{\rm i}\xi}&0\\
		\zeta&{\rm e}^{-{\rm i}\xi}
	\end{pmatrix}:\xi,\zeta \in \mathbb{C} \bigg\}.
\end{equation*}

To eliminate the perturbation $F(x)$ in the cocycle, we need to deal with non-resonant case and resonant case separately. Here  we say $A$ is non-resonant up to $N$, denoted by $A\in \mathcal{NR}(N,\delta)$, if for any ${\bf k}\in \mathcal{T}_{N}\Gamma_{r}$, 
\begin{equation*}
	|{\rm e}^{{\rm i} (\langle{\bf k},\alpha \rangle \pm 2\xi)}-1| \geq \delta.
\end{equation*}
Otherwise, we say $A$ is resonant and denoted by $A\in \mathcal{RS}(N,\delta)$, which means there is a ${\bf k}^*\in \mathcal{T}_{N}\Gamma_{r}$ such that
\begin{equation*}
	|{\rm e}^{{\rm i} (\langle{\bf k}^*,\alpha \rangle+2\xi)}-1| < \delta \quad \text{or} \quad
	|{\rm e}^{{\rm i} (\langle{\bf k}^*,\alpha \rangle-2\xi)}-1| < \delta.
\end{equation*}
In the following subsection, we always fix $N=\frac{2|\log \varepsilon|}{h-h_{+}}$ where $h_{+}\in (0,h)$. Once we have these, we introduce the following key quantitative almost reducibility result, which gives the one step of KAM iteration.
	
\begin{proposition}\label{step}
Let $d\in \mathbb{N}^{+} \cup \{\infty\}$, $\eta>0$, $h>0$, $r\in (0,1]$, $\gamma\in (0,1)$, $\tau>1$, $\alpha\in {\rm DC}^{d}_{\gamma,\tau}$. Suppose that $A\in \mathcal{M}$ with $|\mathrm{Im}\xi| \leq \frac{1}{2}$, $F \in \mathcal{B}_{h,r}$, then for any $h_{+}\in (0,h)$,  $r_{+}\in (0,r)$,
	there exist $\varepsilon=\varepsilon(\eta,h,h_{+},r,r_{+},\gamma,\tau,|\zeta|)$, $c=c(\eta, \gamma,\tau)$ 
	such that if 
	\begin{equation}\label{small}
		\|F\|_{h} < \varepsilon< \frac{c}{(1+|\zeta|)^{10}}
		\min \bigg\{{\rm e}^{-(\frac{1}{h-h_{+}})^{\frac{10}{\eta}}}, {\rm e}^{-( \frac{1}{r-r_{+}})^{\frac{10}{\eta}}} \bigg\}, 
	\end{equation} 
then there exist $B\in C^{\omega}(2\mathbb{T}^{d}_{h}, {\rm SL}(2,\C))$, $A_{+}\in \mathcal{M}$, and $F_{+}\in \mathcal{B}_{h_{+},r_{+}}$ such that
\begin{equation*}
	B(x+\alpha)^{-1} A{\rm e}^{F(x)}  B(x) = A_{+}{\rm e}^{F_{+}(x)}.
\end{equation*}
Moreover, we have the following estimates:

\begin{itemize}
	\item{\bf Non-resonant case:} If $A\in \mathcal{NR}(N,\varepsilon^{\frac{1}{10}})$, then $B(\cdot)=\mathrm{e}^{Y(\cdot)}$ with
	\begin{equation*}
		\|Y\|_{h}\leq  \varepsilon^{\frac{1}{2}}, \quad \|F_{+}\|_{h_{+}} \leq 2\varepsilon^{3}, \quad A_{+}=A. 
	\end{equation*}

	\item {\bf Resonant case:} If $A\in \mathcal{RS}(N,\varepsilon^{\frac{1}{10}})$, then there exists  $\mathbf{k}^{*} \in \mathcal{T}_{N} \Gamma_{r}$ such that
	\begin{enumerate}[font=\normalfont, label={(\arabic*)}]
			\item $A_{+}$ takes the form 
		\begin{equation*}
			A_{+} = \begin{pmatrix}
				{\rm e}^{{\rm i}\xi_{+}} &\zeta_{+}\\ 0&{\rm e}^{-{\rm i} \xi_{+}}
			\end{pmatrix} \quad \text{or} \quad A_{+} = \begin{pmatrix}
				{\rm e}^{{\rm i}\xi_{+}} &0\\ \zeta_{+}&{\rm e}^{-{\rm i}\xi_{+}}
			\end{pmatrix},
		\end{equation*}
		where $\zeta_{+}\in \mathbb{C}$,  $\xi_{+}= \xi-\frac{\langle {\bf k}^{*},\alpha\rangle}{2}$ with estimates
				\begin{equation*}
			|\xi_{+}| \leq \varepsilon^{\frac{1}{10}}, \quad |\zeta_{+}| \leq \varepsilon^{\frac{9}{10}}.
		\end{equation*}

\item It holds that 
\begin{equation*}
	\|B\|_{0} \leq {\rm e}^{|\log \varepsilon|^{\frac{2}{2+\eta}}},   \quad \|F_{+}\|_{h_{+}} \leq \varepsilon^{100}.
\end{equation*}
	\end{enumerate}
\end{itemize}
\end{proposition}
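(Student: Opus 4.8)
The plan is to prove Proposition \ref{step} by a standard two-scale KAM step adapted to the cone Banach space $\mathcal{B}_{h,r}$, treating the non-resonant and resonant cases separately. First I would fix the cutoff $N = \frac{2|\log\varepsilon|}{h-h_+}$ and split the perturbation as $F = \mathcal{T}_N F + \mathcal{R}_N F$; the tail $\mathcal{R}_N F$ is automatically tiny in the $\|\cdot\|_{h_+}$ norm since $\|\mathcal{R}_N F\|_{h_+} \le e^{-(h-h_+)N}\|F\|_h \le \varepsilon^3$ by the choice of $N$, so only the truncated part $\mathcal{T}_N F$ needs to be eliminated by a conjugacy $e^Y$. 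In the \textbf{non-resonant case} $A\in\mathcal{NR}(N,\varepsilon^{1/10})$, the small divisors $|e^{\mathrm{i}(\langle\mathbf{k},\alpha\rangle\pm 2\xi)}-1|$ are all $\ge \varepsilon^{1/10}$ for $\mathbf{k}\in\mathcal{T}_N\Gamma_r$, so one checks that $L_A$ restricted to the subspace spanned by these modes is invertible with $\|L_A^{-1}\|\lesssim \varepsilon^{-1/10}$; applying Lemma \ref{hy} with $\sigma = \varepsilon^{1/10}$ (note $\varepsilon < \sigma^2 = \varepsilon^{1/5}$ fails, so one should really take $\mathbb{P}_{\mathrm{re}}F$ to be the projection onto the tail modes only) produces $Y\in\mathcal{B}_{h,r}^{\mathrm{nre}}$ with $\|Y\|_h\le\varepsilon^{1/2}$ conjugating $(\alpha,Ae^F)$ to $(\alpha,Ae^{F^{\mathrm{re}}})$ where $F^{\mathrm{re}}$ only retains tail modes, hence $\|F_+\|_{h_+}\le 2\varepsilon^3$ and $A_+ = A$. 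The key point that makes the non-resonant estimate work is that the cone condition forces $\widehat{F}_{\mathbf{0}} = 0$, so there is never a resonance at $\mathbf{k}=\mathbf{0}$ and no correction to $A$ is needed.

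For the \textbf{resonant case} $A\in\mathcal{RS}(N,\varepsilon^{1/10})$, there is a unique (by the Diophantine condition on $\alpha$, since two distinct resonant $\mathbf{k}$'s would force $|\langle\mathbf{k}-\mathbf{k}',\alpha\rangle|$ small, contradicting $\mathrm{DC}^d_{\gamma,\tau}$ once $\varepsilon$ is small relative to $\gamma,\tau,N$) resonant mode $\mathbf{k}^*\in\mathcal{T}_N\Gamma_r$ with, say, $|e^{\mathrm{i}(\langle\mathbf{k}^*,\alpha\rangle-2\xi)}-1|<\varepsilon^{1/10}$. I would then conjugate by the rotation-type map $Q(x) = \mathrm{diag}(e^{-\mathrm{i}\langle\mathbf{k}^*,x\rangle/2}, e^{\mathrm{i}\langle\mathbf{k}^*,x\rangle/2})$ (defined on $2\mathbb{T}^d$), which shifts $\xi$ to $\xi_+ = \xi - \frac{\langle\mathbf{k}^*,\alpha\rangle}{2}$ with $|\xi_+|\le\varepsilon^{1/10}$, and crucially shifts the Fourier modes of $F$ by $-\mathbf{k}^*$; one must check that $Q$ maps $\mathcal{B}_{h,r}$ into $\mathcal{B}_{h_+,r_+}$ — this is exactly where shrinking the cone from $r$ to $r_+$ is forced, because subtracting $\mathbf{k}^*$ from $\mathbf{k}\in\Gamma_r$ slightly degrades the cone angle, and one needs $|\mathbf{k}^*|_\eta \le N$ together with the quantitative cone estimate $[\![\mathbf{k}-\mathbf{k}^*]\!] \ge r|\mathbf{k}|_\eta - |\mathbf{k}^*|_\eta \ge r_+|\mathbf{k}-\mathbf{k}^*|_\eta$ for $|\mathbf{k}|_\eta$ large, handling small $|\mathbf{k}|_\eta$ separately. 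After this conjugacy the new constant matrix is non-resonant up to $N$ (the resonance was "used up"), so one more application of the non-resonant step via Lemma \ref{hy} removes the truncated perturbation; the composite conjugacy $B = Q\cdot e^{\tilde Y}$ satisfies $\|B\|_0 \le e^{|\log\varepsilon|^{2/(2+\eta)}}$ because $\|Q\|_0 \lesssim e^{h_+|\mathbf{k}^*|_\eta/\langle\cdot\rangle^\eta}$-type bounds combine with $|\mathbf{k}^*|_\eta\lesssim N\sim|\log\varepsilon|$ and the careful balancing of $h-h_+$, and the new perturbation obeys $\|F_+\|_{h_+}\le\varepsilon^{100}$ after absorbing the loss from $\|Q\|_0^2$ into the huge negative power of $\varepsilon$ in the smallness assumption \eqref{small}. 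The estimate $|\zeta_+|\le\varepsilon^{9/10}$ follows by tracking how the off-diagonal entry transforms: $\zeta$ itself may be $O(1)$ but the resonance normalization together with $|\xi_+|$ small forces the new off-diagonal correction to be of size $\|F\|_h^{1/10}$-ish, which is why $|\zeta|$ enters the smallness threshold with a large power $(1+|\zeta|)^{-10}$.

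The main obstacle I anticipate is the bookkeeping in the resonant case: one has to simultaneously (i) verify uniqueness of the resonant $\mathbf{k}^*$ using the Diophantine condition with the almost-periodic weights $\prod_j(1+\langle j\rangle^\tau|k_j|^\tau)^{-1}$, which requires the cutoff $N$ and the lower bound on $\sigma = \varepsilon^{1/10}$ to be compatible — this is what dictates the precise form $N = 2|\log\varepsilon|/(h-h_+)$ and the double exponential $e^{-(1/(h-h_+))^{10/\eta}}$ in \eqref{small}; (ii) prove the cone is genuinely preserved, i.e. that $Q$ sends $\mathcal{B}_{h,r}$ to $\mathcal{B}_{h_+,r_+}$ with controlled norm, which is the geometric heart of the argument and uses that $\Gamma_r$ has angle strictly less than $\pi$; and (iii) control the analyticity loss, since $Q$ is not bounded on a fixed strip — its norm on $\mathbb{T}^d_{h_+}$ grows like $e^{h_+|\mathbf{k}^*|_\eta}$, so one must trade the strip width $h\to h_+$ against $N$ to keep $\|B\|_0$ subexponential in $|\log\varepsilon|$. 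Once these three points are in place, the non-resonant step is routine: it is just Lemma \ref{hy} plus the tail estimate from the choice of $N$, and the diagonal form of $A_+\in\mathcal{M}$ is preserved because $Q$ is diagonal (or anti-diagonal) and $e^{\tilde Y}$ with $\tilde Y\in\mathcal{B}_{h,r}$ has zero average.
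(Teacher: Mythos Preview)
Your non-resonant case is essentially the paper's argument (the paper takes $\sigma=\varepsilon^{1/3}$ rather than $\varepsilon^{1/10}$, which cleanly satisfies $\varepsilon<\sigma^2$ in Lemma~\ref{hy}, but this is cosmetic). The resonant case, however, has two genuine gaps that would make the scheme fail as written.

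\medskip
\textbf{Missing diagonalization.} You conjugate $A\in\mathcal{M}$ directly by the rotation $Q$. But $A$ is only upper (or lower) triangular, with a possibly $O(1)$ off-diagonal entry $\zeta$. Then
\[
Q(x+\alpha)^{-1}\begin{pmatrix}e^{\mathrm{i}\xi}&\zeta\\0&e^{-\mathrm{i}\xi}\end{pmatrix}Q(x)
=\begin{pmatrix}e^{\mathrm{i}\xi_+}&\zeta\,e^{-\mathrm{i}\langle\mathbf{k}^*,x\rangle}e^{-\mathrm{i}\langle\mathbf{k}^*,\alpha\rangle/2}\\0&e^{-\mathrm{i}\xi_+}\end{pmatrix},
\]
which is $x$-dependent and not in $\mathcal{M}$. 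The paper first uses the resonance hypothesis to get a lower bound $\|2\mathrm{Re}\,\xi\|_{\mathbb{T}}\gtrsim\gamma(1+N)^{-C_1N^{1/(1+\eta)}}$, then diagonalizes $A$ by $P=\begin{pmatrix}1&\zeta/(e^{-\mathrm{i}\xi}-e^{\mathrm{i}\xi})\\0&1\end{pmatrix}$ with $\|P\|\le\tfrac12 e^{|\log\varepsilon|^{2/(2+\eta)}}$. This $P$, not $Q$, is the source of the bound $\|B\|_0\le e^{|\log\varepsilon|^{2/(2+\eta)}}$ (on the real torus $\|Q\|_0=1$), and the loss $\|P\|^2$ is what turns $\varepsilon$ into $\tilde\varepsilon\le\varepsilon^{9/10}$, which in turn gives $|\zeta_+|\le\varepsilon^{9/10}$: the old $\zeta$ is absorbed into $P$ and the new $\zeta_+$ is a Fourier coefficient of the \emph{new} perturbation.

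\medskip
\textbf{Wrong order of $Q$ and the elimination; cone not preserved.} Your claim that conjugation by $Q$ sends $\mathcal{B}_{h,r}$ into $\mathcal{B}_{h_+,r_+}$ is false. Conjugation by $Q$ shifts the $(1,2)$ Fourier mode at $\mathbf{k}$ to $\mathbf{k}-\mathbf{k}^*$; for $\mathbf{k}\in\Gamma_r$ with $|\mathbf{k}|_\eta$ comparable to $|\mathbf{k}^*|_\eta$ this lands outside every $\Gamma_{r_+}$ (in the one-frequency case $\Gamma_r=\mathbb{Z}^+$, the mode $\mathbf{k}=3$ with $\mathbf{k}^*=5$ goes to $-2$). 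Your parenthetical ``handling small $|\mathbf{k}|_\eta$ separately'' has no content: those modes cannot be put back into the cone, and once the perturbation leaves $\mathcal{B}_{h_+,r_+}$ the next KAM step and the whole iteration collapse. The paper's fix is to \emph{reverse your order}: after diagonalizing, it applies Lemma~\ref{hy} with a second, much larger cutoff $N'=C_2|\log\varepsilon|^{1+\eta/2}-N\gg N$ to eliminate every mode with $|\mathbf{k}|_\eta\le N'$ except the single resonant $(1,2)$ entry at $\mathbf{k}^*$. Only then does it apply $Q$. At that stage the perturbation consists of (i) the resonant monomial, which becomes the constant $\zeta_+$, and (ii) tail modes with $|\mathbf{k}|_\eta>N'$, for which the cone computation
\[
[\![\mathbf{k}-\mathbf{k}^*]\!]\ \ge\ r|\mathbf{k}|_\eta-N\ \ge\ r_+|\mathbf{k}|_\eta+(r-r_+)N'-N\ \ge\ r_+|\mathbf{k}-\mathbf{k}^*|_\eta
\]
genuinely works because $(r-r_+)N'\gg N$ by the smallness assumption \eqref{small}. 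The large $N'$ is also what produces $\|F_+\|_{h_+}\le\varepsilon^{100}$: the tail decay $e^{-(h-h_+)N'}$ overwhelms the loss $\|Q\|_{h_+}^2\le e^{h_+|\mathbf{k}^*|_\eta}$ since $N'\gg N\ge|\mathbf{k}^*|_\eta$.
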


\begin{proof} We distinguish the proof into two cases:

{\bf Case 1: Non-resonant case.}  Let $\sigma=\varepsilon^{\frac{1}{3}}$ and decompose  $\mathcal{B}_{h,r}$ as $\mathcal{B}^{\rm nre}_{h,r}(\sigma)\oplus \mathcal{B}^{\rm re}_{h,r}(\sigma)$, where 
	\begin{equation}\label{decomnr}
		\begin{split}
			&\mathcal{B}^{\rm nre}_{h,r}(\sigma)= \Big\{F\in \mathcal{B}_{h,r}:F(x)=\mathcal{T}_{N}F(x)\Big\},\\
			&\mathcal{B}^{\rm re}_{h,r}(\sigma) = \Big\{F\in \mathcal{B}_{h,r}:F(x)=\mathcal{R}_{N}F(x)\Big\}.
		\end{split}
	\end{equation}
	It is easy to see that 
	$\mathcal{B}_{h,r}^{\rm nre}(\sigma)$ is a closed invariant subspace of $\mathcal{B}_{h,r}$. Moreover, we have the following simple observation:
	\begin{lemma}\label{inverse1}
		The operator $L_{A}^{-1}: \mathcal{B}_{h,r}^{\rm nre}(\sigma)\rightarrow \mathcal{B}_{h,r}^{\rm nre}(\sigma)$ is bounded with $\|L_{A}^{-1}\|\leq  \frac{1}{\sigma}$.
	\end{lemma}
	\begin{proof}
		We only consider the case $A=\begin{pmatrix}
			{\rm e}^{{\rm i}\xi}&\zeta\\
			0&{\rm e}^{-{\rm i}\xi}
		\end{pmatrix}$, the proof for the case $A=\begin{pmatrix}
		{\rm e}^{{\rm i}\xi}&0\\
		\zeta&{\rm e}^{-{\rm i}\xi}
	\end{pmatrix}$ is similar. For any $F\in \mathcal{B}_{h,r}^{\rm nre}(\sigma)$, we only need to solve
		\begin{equation*}
			A^{-1}Y(x+\alpha)A-Y(x)  = F(x).
		\end{equation*}
		Expand $Y(x)=\sum_{\bf k} \widehat{Y}_{\bf k}{\rm e}^{{\rm i}\langle {\bf k},x\rangle}$ and $F(x)=\sum_{\bf k} \widehat{F}_{\bf k} {\rm e}^{{\rm i}\langle {\bf k},x\rangle}$ respectively. 
		Comparing the Fourier coefficients, one obtains that for ${\bf k}\in \Gamma_{r}$,
		\begin{equation}\label{solve}
			\begin{split}
				&\widehat{Y}^{2,1}_{\bf k} = \frac{\widehat{F}^{2,1}_{\bf k}}{{\rm e}^{{\rm i}(\langle {\bf k},\alpha\rangle+2\xi)}-1},\\
				&\widehat{Y}^{1,1}_{\bf k} =- \widehat{Y}^{2,2}_{\bf k} = \frac{\widehat{F}^{1,1}_{\bf k} + \zeta {\rm e}^{{\rm i}(\langle {\bf k},\alpha\rangle+\xi)} \widehat{Y}^{2,1}_{\bf k}}{{\rm e}^{{\rm i}\langle {\bf k},\alpha\rangle}-1},\\
				&\widehat{Y}^{1,2}_{\bf k} =\frac{\widehat{F}^{1,2}_{\bf k} + \zeta^{2} {\rm e}^{{\rm i}\langle {\bf k},\alpha\rangle} \widehat{Y}^{2,1}_{\bf k} -2 \zeta {\rm e}^{{\rm i}(\langle {\bf k},\alpha\rangle-\xi)} \widehat{Y}^{1,1}_{\bf k}}{{\rm e}^{{\rm i} (\langle {\bf k},\alpha\rangle-2\xi)}-1}.
			\end{split}	
		\end{equation}

		Recall the following estimate for $\alpha\in{\rm DC}^{d}_{\gamma,\tau}$:
		\begin{lemma}[Small denominators \cite{Proce}]\label{dioestimate}
			Let $d\in \mathbb{N}^{+}\cup \{\infty\}$, $\tau>1$, $\eta>0$, then for any ${\bf k} \in \mathbb{Z}^{d}_{*}$ we have the following estimate
			\begin{equation*}
				\sup_{{\bf k}\in \mathbb{Z}^{d}_{*}, |{\bf k}|_{\eta}\leq N} \prod_{j\in \mathbb{N}} (1+\langle j\rangle^{\tau} |{\bf k}_{j}|^{\tau}) \leq (1+N)^{C_{1}N^{\frac{1}{\eta+1}}},
			\end{equation*}
			where $C_{1}=C_{1}(\eta,\tau)$. Moreover,
			\begin{equation*}
				\prod_{j\in\mathbb{N}} (1+\langle j\rangle^{\tau} |{\bf k}_{j}|^{\tau}) \leq (1+|{\bf k}|_{\eta})^{C_{1}|{\bf k}|_{\eta}^{\frac{1}{\eta+1}}}.
			\end{equation*}
		\end{lemma}
		Combining Lemma \ref{dioestimate} with \eqref{small}, for any ${\bf k}\in\mathcal{T}_{N}\Gamma_{r}$ we have
		\begin{equation*}
			\|\langle {\bf k},\alpha\rangle\|_{\mathbb{T}}\geq \gamma (1+N)^{-C_{1} N^{\frac{1}{\eta+1}}} > \varepsilon^{\frac{1}{10}}.
		\end{equation*}
		Besides, it follows from $A\in \mathcal{NR}(N,\varepsilon^{\frac{1}{10}})$ that $|{\rm e}^{{\rm i} (\langle{\bf k},\alpha \rangle \pm 2\xi)}-1| \geq \varepsilon^{\frac{1}{10}}$ for any ${\bf k}\in\mathcal{T}_{N}\Gamma_{r}$.  Thus, the denominators in \eqref{solve} are well controlled and Lemma \ref{inverse1} follows.
	\end{proof}	
	
	By Lemma \ref{hy}, 
	there exist $Y \in \mathcal{B}_{h,r}^{\rm nre}(\sigma)$ and $F^{\rm re}\in \mathcal{B}^{\rm re}_{h,r}(\sigma)$ such that 
	\begin{equation*}
		{\rm e}^{-Y(x+\alpha)} A{\rm e}^{F(x)} {\rm e}^{Y(x)}= A{\rm e}^{F^{\rm re}(x)},
	\end{equation*}
	with the following estimates
	\begin{equation*}
		\|Y\|_{h}\leq \varepsilon^{\frac{1}{2}}, \quad \|F^{\rm re}\|_{h}\leq 2\varepsilon.
	\end{equation*}
	Let $B={\rm e}^{Y}$, $ A_{+}=A$, and $F_{+}(x)=F^{\rm re}(x)$. By the construction in \eqref{decomnr}, $F_+$ can be expressed as 
	\begin{equation*}
		F_{+}(x)=\sum_{{\bf k}\in \mathcal{R}_{N}\Gamma_{r}} \widehat{F}^{\rm re}_{\bf k} {\rm e}^{{\rm i}\langle {\bf k},x \rangle}.
	\end{equation*}  
	Therefore, for any $h_{+}\in (0,h)$, we have estimate 
	\begin{equation*}
		\begin{split}
			\| F_{+} \|_{h_{+}} = \sum_{{\bf k}\in \mathcal{R}_{N}\Gamma_{r}} \|\widehat{F}^{\rm re}_{\bf k}\| {\rm e}^{ h_{+}|{\bf k}|_{\eta}}
			&\leq {\rm e}^{-(h-h_{+}) N} \sum_{{\bf k}\in \mathcal{R}_{N}\Gamma_{r}} \|\widehat{F}^{\rm re}_{\bf k}\| {\rm e}^{ h|{\bf k}|_{\eta}}\\
			&\leq 2{\rm e}^{-(h-h_{+}) N} \|F\|_{h}< 2\varepsilon^{3},
		\end{split}		
	\end{equation*}
	where the last inequality follows from our choice that $N=\frac{2|\log \varepsilon|}{h-h_{+}}$.

	{\bf Case 2: Resonant case.} In view of $\alpha \in{\rm DC}^{d}_{\gamma,\tau}$  and  $A\in \mathcal{RS}(N,\varepsilon^{\frac{1}{10}})$, Lemma \ref{dioestimate} and \eqref{small} imply
\begin{equation} \label{real}
	\|2{\rm Re} \xi\|_{\mathbb{T}} > \|\langle {\bf k}^{*},\alpha \rangle\|_{\mathbb{T}} +2|{\rm Im} \xi|-\varepsilon^{\frac{1}{10}} \geq  \frac{\gamma}{2} (1+N)^{-C_{1} N^{\frac{1}{\eta+1}}},
\end{equation}
as a consequence, 
\begin{equation}\label{unique3}
	\Big(|{\rm e}^{{\rm i} (\langle{\bf k^{*}},\alpha \rangle + 2\xi)}-1|-\varepsilon^{\frac{1}{10}} \Big) \cdot	\Big(|{\rm e}^{{\rm i} (\langle{\bf k^{*}},\alpha \rangle - 2\xi)}-1|-\varepsilon^{\frac{1}{10}} \Big)<0,
\end{equation}
which shows that the concept of resonance is well-defined.  In fact, if $|{\rm e}^{{\rm i} (\langle{\bf k}^*,\alpha \rangle-2\xi)}-1| < \varepsilon^{\frac{1}{10}}$, then \eqref{unique3} directly follows from \eqref{real} that
\begin{equation*}
	|{\rm e}^{{\rm i} (\langle{\bf k}^*,\alpha \rangle+2\xi)}-1|=\|\langle{\bf k}^*,\alpha \rangle-2\xi\|_{\mathbb{T}} \geq |4\xi|-\varepsilon^{\frac{1}{10}} \gg \varepsilon^{\frac{1}{10}}.
\end{equation*}

Note that \eqref{real} also implies that in the resonant case $\|2{\rm Re} \xi\|_{\mathbb{T}}$ always has a lower bound, which allows us to diagonalize the constant matrix $A$.  Just assume $A=\begin{pmatrix}
	{\rm e}^{{\rm i}\xi}&\zeta\\
	0&{\rm e}^{-{\rm i}\xi}
\end{pmatrix}$, then there exists $P= \begin{pmatrix}
		1&\frac{\zeta}{{\rm e}^{-{\rm i} \xi}-{\rm e}^{{\rm i}\xi}}\\0&1
	\end{pmatrix}$, such that 
$$
P^{-1} AP= \begin{pmatrix}
	{\rm e}^{{\rm i}\xi}&0\\
	0&{\rm e}^{-{\rm i}\xi}
\end{pmatrix}= \tilde{A}.$$
Moreover, just note \begin{equation*}
	\begin{split}
		|{\rm e}^{-{\rm i} \xi}-{\rm e}^{{\rm i}\xi}| &= |\cos{\rm Re}\xi\cdot ({\rm e}^{{\rm Im}\xi}-{\rm e}^{-{\rm Im}\xi}) -{\rm i}\sin {\rm Re}\xi \cdot({\rm e}^{{\rm Im}\xi}+{\rm e}^{-{\rm Im}\xi})|\\
		&\geq \frac{1}{4}\|{2\rm Re}\xi\|_{\mathbb{T}},
	\end{split}
\end{equation*}
then we have estimate
\begin{equation*}
	\|P\|\leq 1+\frac{4|\zeta|}{\|2{\rm Re}\xi\|_{\mathbb{T}}} \leq 1+ \frac{8|\zeta|}{\gamma}(1+N)^{C_{1} N^{\frac{1}{\eta+1}}} \leq \frac{1}{2} {\rm e}^{|\log \varepsilon|^{\frac{2}{2+\eta}}}. 
\end{equation*}
Moreover,  $P^{-1} A{\rm e}^{F(x)}P=\tilde{A}{\rm e}^{\tilde{F}(x)},$
where $\tilde{F}=P^{-1}FP\in \mathcal{B}_{h,r}$ satisfies 
\begin{equation}\label{newerror}
	\|\tilde{F}\|_{h}\leq \|F\|_{h} \|P\|^{2} \leq  {\rm e}^{2|\log \varepsilon|^{\frac{2}{2+\eta}}} \varepsilon =:\tilde{\varepsilon}.
\end{equation}
By the choice of $\varepsilon$ in \eqref{small} we have $\tilde{\varepsilon}\leq \varepsilon^{\frac{9}{10}}$.

After the diagonalization, we are ready to solve the non-resonant terms of the perturbation.  For this purpose, we need to analyze the fine structure of the small denominators. We just consider the case  
\begin{equation}\label{res-u}
|{\rm e}^{{\rm i} (\langle{\bf k}^*,\alpha \rangle-2\xi)}-1| < \varepsilon^{\frac{1}{10}},
\end{equation} since the other case can be dealt with similarly. 
The following lemma shows that the integer cone $\Gamma_{r}$ implies that the resonant site in $\mathcal{T}_{N'}\Gamma_{r}$ is unique under the proper truncation $N' \gg N$.

\begin{lemma}[Uniqueness]\label{unique}
	Let $N'=C_{2} |\log\varepsilon|^{1+\frac{\eta}{2}}-N$ and $C_{3}=\frac{1}{10}C_{2}^{-\frac{2}{2+\eta}}$, where $C_{2}=C_{2}(\eta,\gamma,\tau)$ is the constant such that 
	\begin{equation}\label{C2}
		\frac{1}{10}\Big(\frac{x}{C_{2}}\Big)^{\frac{1}{1+\frac{\eta}{2}}} \geq -\log(\frac{\gamma}{2}) +C_{1} x^{\frac{1}{1+\eta}} \log (1+x), \ \forall \ x>0.
	\end{equation}
	Then for any ${\bf k}\in \mathcal{T}_{N'}\Gamma_{r}$ we have
	\begin{align}
		&|{\rm e}^{{\rm i} \langle{\bf k},\alpha \rangle}-1| \geq \varepsilon^{\frac{1}{10}}, \label{unique1}\\
		&|{\rm e}^{{\rm i} (\langle{\bf k},\alpha \rangle \pm 2\xi)}-1| \geq \varepsilon^{\frac{1}{10}}, \ {\text when} \ {\bf k}\neq {\bf k}^{*}. \label{unique2}
	\end{align}
\end{lemma}
\begin{proof}
	If \eqref{unique1} does not hold, then by using $\alpha\in {\rm DC}^{d}_{\gamma,\tau}$, Lemma  \ref{dioestimate} and \eqref{C2},
	\begin{equation*}
			\varepsilon^{\frac{1}{10}} > \|\langle {\bf k},\alpha \rangle\|_{\mathbb{T}} \geq \gamma (1+|{\bf k}|_{\eta})^{-C_{1}|{\bf k}|_{\eta}^{\frac{1}{\eta+1}}} \geq \mathrm{e}^{-C_{3}|\mathbf{k}|_{\eta}^{\frac{2}{2+\eta}} }.
	\end{equation*}	
Thus, combining the above inequality with the choice of $N'$, we have 
	\begin{equation}\label{xiajie}
		|{\bf k}|_{\eta} > C_{2}|\log\varepsilon|^{1+\frac{\eta}{2}} >N',
	\end{equation}
	which shows a contradiction to ${\bf k}\in \mathcal{T}_{N'}\Gamma_{r}$. 
	
 If \eqref{unique2} does not hold, then there exists ${\bf k' \neq k^{*}}$ such that $|{\rm e}^{{\rm i} (\langle{\bf k}',\alpha \rangle+2\xi)}-1| < \varepsilon^{\frac{1}{10}}$ or $|{\rm e}^{{\rm i} (\langle{\bf k}',\alpha \rangle-2\xi)}-1| < \varepsilon^{\frac{1}{10}}$. This implies that 
	\begin{equation*}
		2\varepsilon^{\frac{1}{10}} > \max\{\|\langle {\bf k}',\alpha\rangle + 2\xi + (\langle {\bf k^{*}},\alpha\rangle -2\xi)  \|_{\mathbb{T}}, \|\langle {\bf k}',\alpha\rangle - 2\xi - (\langle {\bf k^{*}},\alpha\rangle - 2\xi)  \|_{\mathbb{T}} \}.
	\end{equation*}
	Since ${\bf k}'\in \Gamma_{r}$,  it follows from the structure of the integer cone $\Gamma_{r}$ that  
	\begin{equation*}
[\![{\bf k'+k^{*}}]\!] \geq r(|{\bf k'}|_{\eta}+|{\bf k^{*}}|_{\eta})>0,
	\end{equation*}
	which implies that ${\bf k}'+{\bf k}^{*} \neq {\bf 0}$. Moreover, by $\alpha\in {\rm DC}^{d}_{\gamma,\tau}$ and Lemma  \ref{dioestimate},
	\begin{equation*}
		2\varepsilon^{\frac{1}{10}}> \|\langle {\bf k}'\mp{\bf k}^{*},\alpha\rangle\|_{\mathbb{T}} \geq \gamma (1+|{\bf k}'\mp{\bf k}^{*}|_{\eta})^{-C_{1}|{\bf k}'\mp{\bf k}^{*}|_{\eta}^{\frac{1}{\eta+1}}}.
	\end{equation*}
	Same as \eqref{xiajie},  the inequality \eqref{C2} would imply that  
	\begin{equation*}
		|{\bf k}'\mp{\bf k}^{*}|_{\eta} > C_{2}|\log \varepsilon|^{1+\frac{\eta}{2}},
	\end{equation*}
and consequently
	\begin{equation*}
		|{\bf k}'|_{\eta}>C_{2} |\log\varepsilon|^{1+\frac{\eta}{2}}- N =N'.
	\end{equation*}
	This contradicts to ${\bf k}'\in \mathcal{T}_{N'}\Gamma_{r}$, and thus we finish the proof. 
\end{proof}

  Let $\sigma=\tilde{\varepsilon}^{\frac{1}{3}}$ and rewrite the ${\bf k}$-th Fourier coefficient of $\tilde{F}$ by $\widehat{F}_{\bf k} = \begin{pmatrix}
	a_{\bf k}&b_{\bf k}\\c_{\bf k}&-a_{\bf k}
\end{pmatrix}$ for any $\tilde{F}\in \mathcal{B}_{h,r}$. By \eqref{res-u} and Lemma \ref{unique}, the space decomposition with respect to $\tilde{A},\sigma$ takes the form as:
\begin{equation*}
	\begin{split}
		&\mathcal{B}^{\rm nre}_{h,r}(\sigma)= \bigg\{ \tilde{F}(x)=\mathcal{T}_{N'} \tilde{F}(x)-\begin{pmatrix}
			0&b_{\bf k^{*}}	\\0&0
		\end{pmatrix}{\rm e}^{{\rm i}\langle {\bf k}^{*}, x\rangle}  \bigg\},\\
		&\mathcal{B}^{\rm re}_{h,r}(\sigma)= \bigg\{ \tilde{F}(x)=\mathcal{R}_{N'} \tilde{F}(x)
		+\begin{pmatrix}
			0&b_{\bf k^{*}}	\\0&0
		\end{pmatrix}{\rm e}^{{\rm i}\langle {\bf k}^{*}, x\rangle} \bigg\}.
	\end{split}
\end{equation*}
It follows directly that 
$\mathcal{B}_{h,r}^{\rm nre}(\sigma)$ is a closed invariant subspace of $\mathcal{B}_{h,r}$. Moreover, we have the following:

\begin{lemma}\label{inverse2}
	The operator $L_{\tilde{A}}^{-1}: \mathcal{B}_{h,r}^{\rm nre}(\sigma)\rightarrow \mathcal{B}_{h,r}^{\rm nre}(\sigma)$ is bounded with  $\|L_{\tilde{A}}^{-1}\|\leq  \frac{1}{\sigma}$.
\end{lemma}
\begin{proof}
	For any $\tilde{F}\in \mathcal{B}_{h,r}^{\rm nre}(\sigma)$, in order to solve  $\tilde{F}(x)=L_{\tilde{A}}Y(x)$, we only need to 
	expand $Y(x)=\sum_{\bf k} \widehat{Y}_{\bf k}{\rm e}^{{\rm i}\langle {\bf k},x\rangle}$ and $\tilde{F}(x)=\sum_{\bf k} \widehat{F}_{\bf k} {\rm e}^{{\rm i}\langle {\bf k},x\rangle}$ respectively. 
	Direct calculation shows 
	\begin{equation*}
		\begin{split}
			&\widehat{Y}_{\bf k^*} = \begin{pmatrix}
				a_{\bf k^*}/ ({\rm e}^{ {\rm i}\langle {\bf k^*},\alpha\rangle}-1)&0\\c_{\bf k^*}/({\rm e}^{{\rm i}(\langle {\bf k^*},\alpha\rangle+2\xi)}-1)&-a_{\bf k^*}/({\rm e}^{{\rm i}\langle {\bf k^*},\alpha\rangle}-1)
			\end{pmatrix}, \\
			&\widehat{Y}_{\bf k} = \begin{pmatrix}
				a_{\bf k}/ ({\rm e}^{ {\rm i}\langle {\bf k},\alpha\rangle}-1)&b_{\bf k}/({\rm e}^{{\rm i}(\langle {\bf k},\alpha\rangle-2\xi)}-1)\\c_{\bf k}/({\rm e}^{{\rm i}(\langle {\bf k},\alpha\rangle+2\xi)}-1)&-a_{\bf k}/({\rm e}^{{\rm i}\langle {\bf k},\alpha\rangle}-1)
			\end{pmatrix}, \ \forall  \ {\bf k}\neq {\bf k}^{*}.
		\end{split}
	\end{equation*}
 Then the result follows from Lemma \ref{unique}.
\end{proof}

Once we have Lemma \ref{inverse2},  we then apply Lemma \ref{hy} to obtain $Y \in \mathcal{B}_{h,r}^{\rm nre}(\sigma)$ and $F^{\rm re}\in \mathcal{B}^{\rm re}_{h,r}(\sigma)$ such that 
\begin{equation*}
	{\rm e}^{-Y(x+\alpha)} \tilde{A}{\rm e}^{\tilde{F}(x)} {\rm e}^{Y(x)}= \tilde{A}{\rm e}^{F^{\rm re}(x)},
\end{equation*}
with the following estimates
\begin{equation*}
	\|Y\|_{h}\leq \tilde{\varepsilon}^{\frac{1}{2}}\leq\varepsilon^{\frac{9}{20}}, \quad \|F^{\rm re}\|_{h}\leq 2\tilde{\varepsilon}\leq 2\varepsilon^{\frac{9}{10}}.
\end{equation*}
Next, the resonant term $\begin{pmatrix}
	0&b_{\bf k^{*}}\\0&0
\end{pmatrix}{\rm e}^{{\rm i}\langle {\bf k}^{*}, x\rangle}$ in $F^{\rm re}(x)$ can be eliminated by the rotation conjugation $Q_{{\bf k}^{*}}(x)$, where
\begin{equation*}
	Q_{\bf k}(x):= R_{\frac{\langle {\bf k},x\rangle}{2}} =\begin{pmatrix}
		{\rm e}^{\frac{{\rm i}}{2}\langle {\bf k},x\rangle} &0\\
		0&{\rm e}^{-\frac{{\rm i}}{2}\langle {\bf k},x\rangle} 
	\end{pmatrix},
\end{equation*}
which is defined on $2\mathbb{T}^{d}$. Indeed,  direct calculation shows that
\begin{equation*}
 Q_{{\bf k}^{*}}(x+\alpha)^{-1}\tilde{A} Q_{{\bf k}^{*}}(x) =\begin{pmatrix}
			{\rm e}^{{\rm i}(\xi -\frac{\langle {\bf k}^{*},\alpha \rangle}{2})}&0\\0&{\rm e}^{-{\rm i} (\xi-\frac{\langle {\bf k}^{*},\alpha \rangle}{2})} 
		\end{pmatrix}=: \bar{A},
\end{equation*}
and
\begin{eqnarray*}		
  Q_{{\bf k}^{*}}(x)^{-1} F^{\rm re}(x) Q_{{\bf k}^{*}}(x) &=& \begin{pmatrix}
			0&b_{\bf k^{*}}\\0&0
		\end{pmatrix} + Q_{-{\bf k}^{*}}(x) \mathcal{R}_{N'}F^{\rm re}(x) Q_{{\bf k}^{*}}(x)\\
		&=:&L + G(x)
		=:\bar{F}(x).	
\end{eqnarray*}	
Let $B=P\cdot {\rm e}^{Y}\cdot Q_{{\bf k}^{*}} \in C^{\omega}(2\mathbb{T}^{d}_{h}, {\rm SL}(2,\mathbb{C}))$, then \begin{equation*}
	B(x+\alpha)^{-1}A{\rm e}^{F(x)} B(x)={\rm e}^{\bar{A}}{\rm e}^{\bar{F}(x)},
\end{equation*}
with  estimate $$\|B(x)\|_{0} \leq \|P\| \cdot  \|{\rm e}^{Y(x)}\|_{0} \leq {\rm e}^{|\log \varepsilon|^{\frac{2}{2+\eta}}}.$$
Rewrite the cocycle as
\begin{eqnarray*}
	\bar{A}{\rm e}^{\bar{F}(x)} = \bar{A}{\rm e}^{L}{\rm e}^{-L}{\rm e}^{\bar{F}(x)} 
	=\begin{pmatrix}
		{\rm e}^{{\rm i}\xi_{+}}&\zeta_{+}\\0&{\rm e}^{-{\rm i}\xi_{+}}
	\end{pmatrix} {\rm e}^{F_{+}(x)}
	=: A_{+} {\rm e}^{F_{+}(x)},
\end{eqnarray*}
where $\xi_{+}= \xi-\frac{\langle {\bf k}^{*},\alpha\rangle}{2}$, $\zeta_{+}=b_{\bf k^{*}}{\rm e}^{{\rm i}\xi_{+}}$. By the decay of Fourier coefficient $|b_{\bf k^{*}}|\leq \|F^{\rm re}\|_{h} {\rm e}^{-h|{\bf k}^{*}|_{\eta}}$ and \eqref{newerror}, it follows that
\begin{equation*}
	|\zeta_{+}|\leq \|F^{\rm re}\|_{h}  {\rm e}^{-h|{\bf k}^{*}|_{\eta}}{\rm e}^{\varepsilon^{\frac{1}{10}}}\leq \varepsilon^{\frac{9}{10}}.
\end{equation*}

Furthermore, by  Baker-Campbell-Hausdorff Formula,  we have
\begin{equation}\label{F+}
	F_{+}(x) = G(x) +\frac{1}{2} \left[-L,G(x)\right] + \frac{1}{12}    \left[-L,\left[-L,G(x)\right] \right] +  \cdots.
\end{equation}
The following result is important for us, which says that the rotation $Q_{{\bf k}^{*}}(x)$ preserves the cone structure, at the cost of shrinking $r$ a little, as shown in Figure \ref{iteratecone}. Consequently,  $F_{+}$ also has the cone structure. This is the key step why this modified KAM iteration can be iterated. 

\begin{figure}[htbp]
	\centering
	\begin{tikzpicture}[tdplot_main_coords]	
		\coordinate (O) at (0,0,0);
		\draw (0,0,-0.5) -- (O);
		\foreach \x in {-4,-3,...,4}
		\foreach \y in {-4,-3,...,4}
		{
			\draw[gray,very thin]  (\x,-4) -- (\x,4);
			\draw[gray,very thin]  (-4,\y) -- (4,\y);
		}
		\draw[->] (-6,0,0) -- (6,0,0) node[right]{};
		\draw[->] (0,6,0) -- (0,-6,0) node[right]{};
		\coneback[surface]{2}{3}{25}
		\coneback[surface]{2.5}{2.5}{15}
		\coneback[surface]{3}{2}{10}
		\draw[->] (O) -- (0,0,4) node[above]{};
		\conefront[surface]{3}{2}{10}
		\conefront[surface]{2.5}{2.5}{15}
		\conefront[surface]{2}{3}{25}
		\draw[->] (2.5,2.5,2)node[right]{$\Gamma_{r_{n+1}}$} -- (2.2,2.2,2)node{};
		\draw[->] (2.15,2.15,2.5)node[right]{$\Gamma_{r_{n}}$} -- (1.85,1.85,2.5)node{};
		\draw[->] (1.8,1.8,3)node[right]{$\Gamma_{r_{n-1}}$} -- (1.5,1.5,3)node{};
		\draw (2.6,2.6,1.5) -- (2.6,2.6,1.5) node[below] {$r_{n+1}<r_{n}<r_{n-1}$};
	\end{tikzpicture}
	\caption{Integer cones in the KAM iteration}
	\label{iteratecone}
\end{figure}
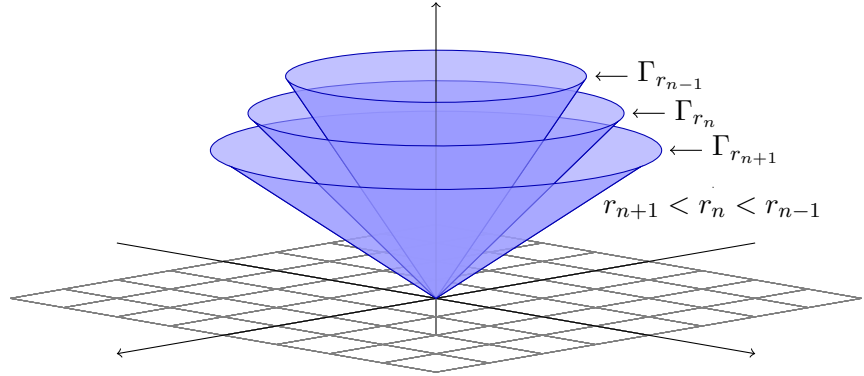

\begin{lemma}\label{keep}
	For any $F^{\rm re}\in \mathcal{B}^{\rm re}_{h_{+},r}(\sigma)$ and ${\bf k}^{*}\in \mathcal{T}_{N}\Gamma_{r}$, we have
	\begin{equation*}
		G(x)=Q_{-{\bf k}^{*}}(x)\cdot ( \mathcal{R}_{N'} F^{\rm re}(x)) \cdot Q_{{\bf k}^{*}}(x) \in \mathcal{B}_{h_{+},r_{+}}.
	\end{equation*}	
	Consequently, we have $F_{+}\in \mathcal{B}_{h_{+},r_{+}}$.
\end{lemma}
\begin{proof}
	Since $\mathcal{R}_{N'}F^{\rm re}\in \mathcal{B}^{\rm re}_{h_{+},r}$, then the $\bf k$-th term in its Fourier series is 
	\begin{equation*}
		\widehat{F}_{\bf k}{\rm e}^{{\rm i} \langle {\bf k}, x\rangle} = \begin{pmatrix}
			a_{\bf k}&b_{\bf k}\\
			c_{\bf k}&-a_{\bf k}
		\end{pmatrix}{\rm e}^{{\rm i}\langle {\bf k}, x\rangle}, \quad \forall \ {\bf k}\in \mathcal{R}_{N'}\Gamma_{r}.
	\end{equation*}
	Let $D_{\bf k}(x) = Q_{-\bf k^{*}}(x)\widehat{F}_{\bf k}{\rm e}^{{\rm i}\langle {\bf k}, x\rangle} Q_{\bf k^{*}}(x) $. By direct calculation we have
	\begin{equation*}
		D_{\bf k}(x) = \begin{pmatrix}
			a_{\bf k}&0\\0&-a_{\bf k}
		\end{pmatrix} {\rm e}^{{\rm i} \langle {\bf k}, x\rangle} + \begin{pmatrix}
			0&b_{\bf k}\\0&0
		\end{pmatrix}{\rm e}^{{\rm i}\langle {\bf k-k^{*}}, x\rangle}+ \begin{pmatrix}
			0&0\\c_{\bf k}&0
		\end{pmatrix} {\rm e}^{{\rm i}\langle {\bf k+k^{*}}, x\rangle}.
	\end{equation*}
	On the one hand, since  ${\bf k^{*}} \in \mathcal{T}_{N}\Gamma_{r}$ and ${\bf k}\in  \mathcal{R}_{N'}\Gamma_{r}$, we conclude that 
	\begin{equation*}
		\begin{split}
			[\![\bf k-k^{*}]\!] &>r|{\bf k}|_{\eta} -[\![\bf k^{*}]\!]\\
			&\geq r|{\bf k}|_{\eta} -N\\
			&= r_{+}|{\bf k}|_{\eta} +(r-r_{+})|{\bf k}|_{\eta} -N\\
			&\geq  r_{+}|{\bf k}|_{\eta} +(r-r_{+})\Big(C_{2}|\log \varepsilon|^{1+\frac{\eta}{2}}-N\Big) -N\\
			&\geq r_{+}|{\bf k}|_{\eta}  + C_{2}|\log \varepsilon|^{-\frac{\eta}{10}} |\log \varepsilon|^{1+\frac{\eta}{2}}-2N\\
			&\geq r_{+}|{\bf k}|_{\eta} +N\\
			&\geq r_{+}|{\bf k}|_{\eta} + r_{+} |{\bf k^{*}}|_{\eta}\\
			&\geq r_{+}|{\bf k-k^{*}}|_{\eta},
		\end{split}
	\end{equation*}
	where we use the fact $N\leq |\log \varepsilon|^{1+\frac{\eta}{8}}$ and $|\log\varepsilon| \geq (r-r_{+})^{-\frac{10}{\eta}}$. This just means ${\bf k-k^{*}} \in \Gamma_{r_{+}}$. On the other hand, $[\![{\bf k+k^{*}}]\!]\geq r(|{\bf k}|_{\eta}+|{\bf k^{*}}|_{\eta}) > r_{+} |{\bf k+k^{*}}|_{\eta}$ means ${\bf k+k^{*}}\in \Gamma_{r_{+}}$. We conclude that ${\bf k},{\bf k-k^{*}}, {\bf k+k^{*}}\in \Gamma_{r_{+}}$ and thus $D_{\bf k}(x)\in \mathcal{B}_{h_{+},r_{+}}$.

By Proposition \ref{banach}, we have $
		G(x)=\sum_{{\bf k}\in \mathcal{R}_{N'}\Gamma_{r_{+}}} D_{\bf k}(x)\in\mathcal{B}_{h_{+},r_{+}}.$
	Rewrite $G=\begin{pmatrix}
	G_{11}&G_{12}\\G_{21}&-G_{11}
\end{pmatrix}$, then
\begin{equation*}
[-L,G(x)]=\left[\begin{pmatrix}
		0&-b_{\bf k^{*}}\\0&0
	\end{pmatrix},G\right] = \begin{pmatrix}
	-b_{\bf k^{*}} G_{21} & 2b_{\bf k^{*}} G_{11}\\0&b_{\bf k^{*}} G_{21}
\end{pmatrix} \in \mathcal{B}_{h_{+},r_{+}},
\end{equation*}
which implies that the R.H.S. in \eqref{F+} belongs to $\mathcal{B}_{h_{+},r_{+}}$. Therefore, we finish the proof again by Proposition \ref{banach}.
\end{proof}

By \eqref{F+} and Lemma \ref{keep}, we have 
\begin{equation*}
	\|F_{+}\|_{h_{+}} \leq 2\|G\|_{h_{+}} \leq 2\| \mathcal{R}_{N'}F^{\rm re}(x)\|_{h_{+}} \|Q_{{\bf k}^{*}}\|_{h_{+}}^{2} \leq  4\tilde{\varepsilon}{\rm e}^{-N'(h-h_{+})} {\rm e}^{ h_{+}|{\bf k}^{*}|_{\eta}}.
\end{equation*}
Since $N'=C_{2} |\log \varepsilon|^{1+\frac{\eta}{2}}-N \gg |\log \varepsilon|^{\frac{\eta}{10}} N$ and ${\bf k}^{*}\in \mathcal{T}_{N}\Gamma_{r}$, one can get that
\begin{equation*}
	\|F_{+}\|_{h_{+}} \leq   \varepsilon^{\frac{9}{10}} {\rm e}^{-100N(h-h_{+})}  \varepsilon^{\frac{-h_{+}}{h-h_{+}}} \leq \varepsilon^{100}.
\end{equation*}
This finishes the proof.
\end{proof}

\section{Reducibility of almost-periodic cocycles}

By the KAM iteration developed in the last section, we now prove the reducibility results of the almost-periodic cocycle $(\alpha, A\mathrm{e}^{F})$ with perturbation $F\in\mathcal{B}_{h,r}$. We always choose 
\begin{equation*}
	A_{0}=A, \ \  F_{0}=F, \ \  \varepsilon_{0}=\varepsilon, \ \ h_{0}=h, \ \  r_{0}=r.
\end{equation*}
For $n\geq 0$, we define the sequences
\begin{equation}\label{star}
	\varepsilon_{n+1}=2\varepsilon_{n}^{3},\
	h_{n+1}= h_{n}- \frac{h-h'}{(n+2)^{2}},\
	r_{n+1}= r_{n}- \frac{r-r'}{(n+2)^{2}},\
	N_{n} = \frac{2|\log \varepsilon_{n}| }{h_{n}-h_{n+1}}.\tag{$*$}
\end{equation}
Two situations need to be treated separately for $A\in \mathcal{M}$, i.e.,  the eigenvalues of $A$ are $\mathrm{e}^{\pm\mathrm{i}\rho}$ with $\rho\in \mathbb{R}$ (elliptic case and parabolic case) or $\mathrm{e}^{\pm \mathrm{i}\xi} $ with $\xi\notin \mathbb{R}$ (hyperbolic case).

\subsection{Elliptic case and parabolic case}
Suppose that $A=\begin{pmatrix}
	{\rm e}^{{\rm i}\rho}&\zeta\\0&{\rm e}^{-{\rm i}\rho}
\end{pmatrix}$ with $\rho\in \mathbb{R}$, the following Proposition \ref{elliparabolic} shows that $(\alpha, A\mathrm{e}^{F})$ is almost reducible.

\begin{proposition}\label{elliparabolic}
Let $d\in \mathbb{N}^{+} \cup \{\infty\}$, $\eta>0$, $h>0$, $h'\in (0,h)$, $r\in (0,1]$, $r'\in (0,r)$, $\gamma\in(0,1)$, $\tau>1$, $\alpha\in {\rm DC}^{d}_{\gamma,\tau}$. Suppose that $F\in \mathcal{B}_{h,r}$.  There exists $\varepsilon=\varepsilon(\eta,h,h',r,r',\gamma,\tau,|\zeta|)$, $c=c(\eta,\gamma,\tau)$ such that if
	\begin{equation*}
		\|F\|_{h} < \varepsilon< \frac{c}{(1+|\zeta|)^{10}} \min \bigg\{{\rm e}^{-(\frac{1}{h-h'})^{\frac{10}{\eta}}}, {\rm e}^{-(\frac{1}{r-r'})^{\frac{10}{\eta}}} \bigg\},
	\end{equation*} 
then there exist $\Phi_{n}\in C^{\omega}(2\mathbb{T}^{d}_{h_{n}}, \mathrm{SL}(2,\mathbb{C}))$ with $\|\Phi_{n}\|_{0}\leq{\rm e}^{C_{\eta}|\log \varepsilon_{n}|^{\frac{2}{2+\eta}}}$,   $C_{\eta}:=(2^{\frac{2}{2+\eta}}-1)^{-1}$, and $F_{n}\in \mathcal{B}_{h_{n},r_{n}}$ with $\|F_{n}\|_{h_{n}} \leq \varepsilon_{n}$ such that
\begin{equation*}
	\Phi_{n}(x+\alpha)^{-1} A\mathrm{e}^{F(x)} \Phi_{n}(x) = A_{n}\mathrm{e}^{F_{n}(x)},
\end{equation*}
where $A_{n}=\begin{pmatrix}
	\mathrm{e}^{\mathrm{i}\rho_{n}} & \zeta_{n}\\
	0&\mathrm{e}^{-\mathrm{i}\rho_{n}}
\end{pmatrix} \ \text{or} \ A_{n}=\begin{pmatrix}
	\mathrm{e}^{\mathrm{i}\rho_{n}} &0 \\
	\zeta_{n}&\mathrm{e}^{-\mathrm{i}\rho_{n}}
\end{pmatrix}$ with $\rho_{n}\in \mathbb{R}$ and $|\zeta_{n}|<|\zeta|$. 

Moreover, if we denote $\Theta_{n} = \cup_{\mathbf{k}\in \mathcal{T}_{N_{n}} \Gamma_{r_{n}}}\Theta_{n}(\mathbf{k})$, where 
\begin{equation*}
	\Theta_{n}(\mathbf{k}) = \Big\{ \rho\in \mathbb{R}: |{\rm e}^{{\rm i} (\langle{\bf k},\alpha \rangle+2\rho_{n})}-1| \leq \varepsilon_{n}^{\frac{1}{10}}\Big\} \cup \Big\{ \rho\in \mathbb{R}: |{\rm e}^{{\rm i} (\langle{\bf k},\alpha \rangle-2\rho_{n})}-1| \leq \varepsilon_{n}^{\frac{1}{10}} \Big\},
\end{equation*}
then we have the following:
	\begin{enumerate}[font=\normalfont, label={(\arabic*)}]
		\item \label{item:nonres}If $\rho\notin \Theta_{n}$, then $ \Phi_{n+1} = \Phi_{n} \cdot \mathrm{e}^{Y_{n}}$ with 
		\begin{equation*} \|Y_{n}\|_{h}\leq \varepsilon_{n}^{\frac{1}{2}}, \quad \rho_{n+1}=\rho_{n}, \quad \zeta_{n+1}=\zeta_{n}.	
		\end{equation*}	
		
	\item \label{item:reson}If  $\rho\in \Theta_{n}(\mathbf{k}^{*}_{n})$, then  $\Phi_{n+1} = \Phi_{n} \cdot B_{n} $ with 
	\begin{equation*}
		\|B_{n}\|_{0}\leq  {\rm e}^{|\log \varepsilon_{n}|^{\frac{2}{2+\eta}}}, \quad \rho_{n+1} =\rho_{n}-\frac{\langle \mathbf{k}^{*}_{n},\alpha \rangle}{2}, \quad |\rho_{n+1}|\leq \varepsilon_{n}^{\frac{1}{10}}, \quad |\zeta_{n+1}|\leq \varepsilon_{n}^{\frac{9}{10}}.
	\end{equation*}

	\item \label{item:spe}If $\rho\in \Theta_{n_{j}}(\mathbf{k}^{*}_{n_{j}})\cap \Theta_{n_{j+1}}(\mathbf{k}^{*}_{n_{j+1}})$, then 
	\begin{equation*}
		|\mathbf{k}^{*}_{n_{j+1}}|_{\eta} \geq |{\bf k}^{*}_{n_{j}}|_{\eta}^{1+\frac{\eta}{4+\eta}}.
	\end{equation*}
\end{enumerate}
\end{proposition}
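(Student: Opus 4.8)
The plan is to apply the one-step result, Proposition \ref{step}, repeatedly along the parameter sequences of \eqref{star}, starting from $A_0=A$, $F_0=F$, $\varepsilon_0=\varepsilon$, $h_0=h$, $r_0=r$ and $\Phi_0=\mathrm{id}$. Since $\sum_{n\geq 0}(n+2)^{-2}=\tfrac{\pi^2}{6}-1<1$, the sequences $h_n,r_n$ decrease monotonically but stay above $h'$ and $r'$, while $\varepsilon_{n+1}=2\varepsilon_n^3$ gives $|\log\varepsilon_{n+1}|=3|\log\varepsilon_n|-\log 2\geq 2|\log\varepsilon_n|$ once $\varepsilon_0<\tfrac12$, so $\varepsilon_n\to 0$ super-exponentially and $|\log\varepsilon_n|\geq 2^{n}|\log\varepsilon_0|$. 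I run an induction on $n$ with hypotheses: $A_n\in\mathcal{M}$ of one of the two triangular shapes with $\rho_n\in\R$ and $|\zeta_n|$ bounded; $F_n\in\mathcal{B}_{h_n,r_n}$ with $\|F_n\|_{h_n}\leq\varepsilon_n$; and $\|\Phi_n\|_0\leq \mathrm{e}^{C_\eta|\log\varepsilon_n|^{2/(2+\eta)}}$. The first point to check is that these hypotheses guarantee the smallness condition \eqref{small} of Proposition \ref{step} applied with $h_n,h_{n+1},r_n,r_{n+1},|\zeta_n|$ in place of $h,h_+,r,r_+,|\zeta|$: because $h_n-h_{n+1}=(h-h')(n+2)^{-2}$ and $r_n-r_{n+1}=(r-r')(n+2)^{-2}$, the right-hand side of \eqref{small} degrades only like $\mathrm{e}^{-C(\eta,h-h',r-r')\,(n+2)^{20/\eta}}$, which is dominated for every $n$ by the super-exponential decay of $\varepsilon_n$ provided $\varepsilon_0$ is as small as in the hypothesis (for $n=0$ this is the stated smallness, and the inductive step uses $\varepsilon_{n+1}\leq\varepsilon_n^2$ against a sub-exponential requirement). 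Hence Proposition \ref{step} applies at every step and the scheme is well defined.

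\textbf{Propagating the structure; items \ref{item:nonres}--\ref{item:reson}.} Apply Proposition \ref{step} at step $n$; the hypothesis $|\mathrm{Im}\,\xi|\leq\tfrac12$ holds since $\xi=\rho_n\in\R$. In the non-resonant alternative one has $A_{n+1}=A_n$, $\Phi_{n+1}=\Phi_n\mathrm{e}^{Y_n}$ with $\|Y_n\|_h\leq\varepsilon_n^{1/2}$, $\rho_{n+1}=\rho_n$, $\zeta_{n+1}=\zeta_n$, and $\|F_{n+1}\|_{h_{n+1}}\leq 2\varepsilon_n^3=\varepsilon_{n+1}$; this is item \ref{item:nonres}. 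In the resonant alternative there is $\mathbf{k}^{*}_n\in\mathcal{T}_{N_n}\Gamma_{r_n}$ and one reads off $\Phi_{n+1}=\Phi_n B_n$ with $\|B_n\|_0\leq\mathrm{e}^{|\log\varepsilon_n|^{2/(2+\eta)}}$, $\rho_{n+1}=\rho_n-\tfrac{\langle\mathbf{k}^{*}_n,\alpha\rangle}{2}\in\R$, $|\rho_{n+1}|\leq\varepsilon_n^{1/10}$, $|\zeta_{n+1}|\leq\varepsilon_n^{9/10}$, and $\|F_{n+1}\|_{h_{n+1}}\leq\varepsilon_n^{100}\leq\varepsilon_{n+1}$; this is item \ref{item:reson}. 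In particular $|\zeta_n|$ stays bounded (unchanged at non-resonant steps, collapsed to $\leq\varepsilon_n^{9/10}$ at resonant ones), which is all that is needed to keep $(1+|\zeta_n|)^{-10}$ in \eqref{small} uniformly bounded below; and $F_{n+1}\in\mathcal{B}_{h_{n+1},r_{n+1}}$ by Proposition \ref{step}. This closes the matrix-and-error part of the induction.

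\textbf{The conjugacy bound.} With $\Phi_{n+1}=\Phi_n B_n$ (where $B_n=\mathrm{e}^{Y_n}$ in the non-resonant case), submultiplicativity of $\|\cdot\|_0$ and the worst case (a resonant step) give $\|\Phi_{n+1}\|_0\leq\|\Phi_n\|_0\,\mathrm{e}^{|\log\varepsilon_n|^{2/(2+\eta)}}$. Feeding in $\|\Phi_n\|_0\leq\mathrm{e}^{C_\eta|\log\varepsilon_n|^{2/(2+\eta)}}$ yields $\|\Phi_{n+1}\|_0\leq\mathrm{e}^{(C_\eta+1)|\log\varepsilon_n|^{2/(2+\eta)}}$, and $(C_\eta+1)|\log\varepsilon_n|^{2/(2+\eta)}\leq C_\eta|\log\varepsilon_{n+1}|^{2/(2+\eta)}$ because $\tfrac{C_\eta+1}{C_\eta}=2^{2/(2+\eta)}\leq\bigl(\tfrac{|\log\varepsilon_{n+1}|}{|\log\varepsilon_n|}\bigr)^{2/(2+\eta)}$ by the choice $C_\eta=(2^{2/(2+\eta)}-1)^{-1}$ together with $|\log\varepsilon_{n+1}|\geq 2|\log\varepsilon_n|$. (Non-resonant steps only add $\varepsilon_n^{1/2}$ to the exponent, which is harmless.) This closes the induction and gives the asserted bound on $\|\Phi_n\|_0$.

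\textbf{Separation of resonances; item \ref{item:spe}, the main difficulty.} Let $n_j<n_{j+1}$ be two consecutive steps at which the resonant alternative occurs. All steps strictly in between are non-resonant, so $\rho$ is frozen there and $\rho_{n_{j+1}}=\rho_{n_j+1}$; by item \ref{item:reson}, $|\rho_{n_{j+1}}|\leq\varepsilon_{n_j}^{1/10}$. At step $n_{j+1}$ the resonance condition $|\mathrm{e}^{\mathrm{i}(\langle\mathbf{k}^{*}_{n_{j+1}},\alpha\rangle\pm 2\rho_{n_{j+1}})}-1|\leq\varepsilon_{n_{j+1}}^{1/10}$ forces, after the triangle inequality and $\varepsilon_{n_{j+1}}\leq\varepsilon_{n_j}$,
\[
\|\langle\mathbf{k}^{*}_{n_{j+1}},\alpha\rangle\|_{\mathbb{T}}\ \lesssim\ \varepsilon_{n_j}^{1/10}.
\]
Combining this with $\alpha\in\mathrm{DC}^d_{\gamma,\tau}$ and Lemma \ref{dioestimate} --- exactly the computation leading to \eqref{xiajie} in the proof of Lemma \ref{unique} --- yields $|\mathbf{k}^{*}_{n_{j+1}}|_\eta\geq c(\eta,\gamma,\tau)\,|\log\varepsilon_{n_j}|^{1+\eta/2}$. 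On the other hand $\mathbf{k}^{*}_{n_j}\in\mathcal{T}_{N_{n_j}}\Gamma_{r_{n_j}}$, so $|\mathbf{k}^{*}_{n_j}|_\eta\leq N_{n_j}=\tfrac{2(n_j+2)^2}{h-h'}|\log\varepsilon_{n_j}|$. Hence the desired inequality $|\mathbf{k}^{*}_{n_{j+1}}|_\eta\geq|\mathbf{k}^{*}_{n_j}|_\eta^{1+\eta/(4+\eta)}$ reduces to
\[
c(\eta,\gamma,\tau)\,|\log\varepsilon_{n_j}|^{\,\frac{\eta}{2}-\frac{\eta}{4+\eta}}\ \geq\ \Bigl(\tfrac{2(n_j+2)^2}{h-h'}\Bigr)^{1+\frac{\eta}{4+\eta}},
\]
and since $\tfrac{\eta}{2}-\tfrac{\eta}{4+\eta}=\tfrac{\eta(2+\eta)}{2(4+\eta)}>0$ while $|\log\varepsilon_{n_j}|\geq 2^{n_j}|\log\varepsilon_0|$, the left side grows exponentially in $n_j$ and dominates the polynomial right side for all $j$ once $\varepsilon_0$ is small enough, which it is. The delicate point --- and the reason the exponent in \ref{item:spe} is $1+\tfrac{\eta}{4+\eta}$ rather than the ``bare'' $1+\tfrac{\eta}{2}$ coming from the Diophantine estimate --- is precisely that the analyticity loss $h_n-h_{n+1}$ shrinks like $(n+2)^{-2}$, so one must give back part of the Diophantine gain to absorb the factor $(n_j+2)^2$ hidden in $N_{n_j}$; calibrating this loss to $\tfrac{\eta}{4+\eta}$ is what makes \ref{item:spe} uniform in $j$. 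This estimate is the main obstacle; the remainder of the proof is bookkeeping on the sequences in \eqref{star}.
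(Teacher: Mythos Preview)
Your proposal is correct and follows the same approach as the paper: an induction that feeds Proposition~\ref{step} the parameter sequences \eqref{star}, verifies \eqref{small} at every step via the super-exponential decay of $\varepsilon_n$ against the merely sub-exponential degradation of the right-hand side, reads off items \ref{item:nonres}--\ref{item:reson} directly from the two alternatives of Proposition~\ref{step}, and closes the $\|\Phi_n\|_0$ bound using $|\log\varepsilon_{n+1}|\geq 2|\log\varepsilon_n|$ together with the definition of $C_\eta$.

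The only noticeable difference is bookkeeping in item~\ref{item:spe}. The paper bounds $|\rho_{n_{j+1}}|$ from below via the Diophantine condition on $\mathbf{k}^*_{n_{j+1}}$ and from above by $\varepsilon_{n_j}^{1/10}$, then absorbs the factor $(n_j+2)^2$ hidden in $N_{n_j}$ into a small power of $|\log\varepsilon_{n_j}|$ (writing $N_{n_j}\leq|\log\varepsilon_{n_j}|^{1+\eta/8}$) before comparing exponents. You instead bound $\|\langle\mathbf{k}^*_{n_{j+1}},\alpha\rangle\|_{\mathbb T}$ directly by the triangle inequality and keep the $(n_j+2)^2$ explicit, then use the exponential growth of $|\log\varepsilon_{n_j}|$ in $n_j$ to swallow the polynomial. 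Both routes are equivalent and lead to the same exponent $1+\tfrac{\eta}{4+\eta}$; your framing makes the source of the loss (the shrinking analyticity window) more visible, while the paper's is slightly more compact.
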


\begin{proof}
We are going to prove Proposition \ref{elliparabolic} inductively. Suppose that we are at $n$-th step, i.e., we already construct $\Phi_{n}$ such that
\begin{equation*}
	\Phi_{n}(x+\alpha)^{-1} A \mathrm{e}^{F(x)} \Phi_{n}(x) = A_{n} \mathrm{e}^{F_{n}(x)},
\end{equation*}
with following estimates
\begin{equation*}
	\|\Phi_{n}\|_{0}\leq {\rm e}^{C_{\eta}|\log \varepsilon_{n}|^{\frac{2}{2+\eta}}},  \quad \|F_{n}\|_{h_{n}} \leq \varepsilon_{n}, \quad \rho_{n}\in \mathbb{R}, \quad |\zeta_{n}|\leq |\zeta|.
\end{equation*}
By the selection of \eqref{star} and $|\zeta_{n}|\leq |\zeta|$, for any $n\geq 0$ we have
	\begin{equation*}
		\varepsilon_{n} < \frac{c}{(1+|\zeta_{n}|)^{10}} \min \bigg\{ \mathrm{e}^{-(\frac{1}{h_{n}-h_{n+1}})^{\frac{10}{\eta}}},  \mathrm{e}^{-(\frac{1}{r_{n}-r_{n+1}})^{\frac{10}{\eta}}}\bigg\}.
	\end{equation*}
In fact, by Proposition \ref{step} there exist $B_{n}\in C^{\omega}(2\mathbb{T}^{d}_{h_{n+1}}, \mathrm{SL}(2,\mathbb{C}))$, $F_{n+1}\in \mathcal{B}_{h_{n+1},r_{n+1}}$, $A_{n+1}\in \mathcal{M}$ such that
\begin{equation*}
	B_{n}(x+\alpha)^{-1} A_{n}\mathrm{e}^{F_{n}(x)} B_{n}(x) = A_{n+1}\mathrm{e}^{F_{n+1}(x)}.
\end{equation*}
Let $\Phi_{n+1}=\Phi_{n} \cdot B_{n}$. Then 
\begin{equation*}
	\Phi_{n+1}(x+\alpha)^{-1} A{\rm e}^{F(x)}\Phi_{n+1}(x) = A_{n+1}{\rm e}^{F_{n+1}(x)}.
\end{equation*}
To obtain the estimates of $\Phi_{n+1}$, $F_{n+1}$, $A_{n+1}$,  we distinguish two cases:

{\bf Non-resonant case:} If $\rho\notin \Theta_{n}$,  which means $A_{n}\in \mathcal{NR}(N_{n}, \varepsilon_{n}^{\frac{1}{10}})$.
Then by Proposition \ref{step}, we have  $B_{n}=\mathrm{e}^{Y_{n}}$ with estimates
\begin{equation*}
	\|Y_{n}\|_{h_{n}} \leq \varepsilon_{n}^{\frac{1}{2}}, \quad \|F_{n+1}\|_{h_{n+1}} \leq 2\varepsilon_{n}^{3}= \varepsilon_{n+1}, \quad A_{n+1}=A_{n}.
\end{equation*}
Hence $\rho_{n+1}=\rho_{n}\in \mathbb{R}$ and $|\zeta_{n+1}|=|\zeta_{n}|\leq |\zeta|$. It is obvious that 
\begin{equation*}
	\|\Phi_{n+1}\|_{0} = \|\Phi_{n}\cdot B_{n}\|_{0} \leq {\rm e}^{C_{\eta}|\log \varepsilon_{n+1}|^{\frac{2}{2+\eta}}}.
\end{equation*}
	This proves Proposition \ref{elliparabolic}\ref{item:nonres}.

{\bf Resonant case:} If $\rho\in \Theta_{n}(\mathbf{k}_{n}^{*})$, which means $A_{n}\in \mathcal{RS}(N_{n}, \varepsilon_{n}^{\frac{1}{10}})$.
Then by Proposition \ref{step}, we have following estimates
\begin{equation*}
  \|B_{n}\|_{0} \leq {\rm e}^{|\log \varepsilon_{n}|^{\frac{2}{2+\eta}}}, \quad \|F_{n+1}\|_{h_{n+1}} \leq \varepsilon_{n}^{100}<\varepsilon_{n+1}.
\end{equation*}
Moreover, $A_{n+1}$ takes the form
\begin{equation*}
A_{n+1}=\begin{pmatrix}
		\mathrm{e}^{\mathrm{i}\rho_{n+1}} & \zeta_{n+1}\\
		0&\mathrm{e}^{-\mathrm{i}\rho_{n+1}}
	\end{pmatrix} \ \ \text{or} \ \ A_{n+1}=\begin{pmatrix}
		\mathrm{e}^{\mathrm{i}\rho_{n+1}} &0 \\
		\zeta_{n+1}&\mathrm{e}^{-\mathrm{i}\rho_{n+1}}
	\end{pmatrix},
\end{equation*}
where $\rho_{n+1}=\rho_{n}-\frac{\langle \mathbf{k},\alpha\rangle}{2}\in \mathbb{R}$ with $|\rho_{n+1}|\leq \varepsilon_{n}^{\frac{1}{10}}$ and $|\zeta_{n+1}|\leq \varepsilon_{n}^{\frac{9}{10}}$. This proves Proposition \ref{elliparabolic}\ref{item:reson}. It is easy to see that
\begin{equation*}
	\|\Phi_{n+1}\|_{0}=\|\Phi_{n}\cdot B_{n}\|_{0} \leq {\rm e}^{C_{\eta}|\log \varepsilon_{n}|^{\frac{2}{2+\eta}}}  {\rm e}^{|\log \varepsilon_{n}|^{\frac{2}{2+\eta}}} \leq {\rm e}^{C_{\eta}|\log \varepsilon_{n+1}|^{\frac{2}{2+\eta}}}.
\end{equation*}

When $\rho\in \Theta_{n_{j}}(\mathbf{k}^{*}_{n_{j}})\cap \Theta_{n_{j+1}}(\mathbf{k}^{*}_{n_{j+1}})$,  on the one hand, it follows from $$\|2\rho_{n_{j+1}}-\langle {\bf k}^{*}_{n_{j+1}},\alpha\rangle\|_{\mathbb{T}}\leq \varepsilon_{n_{j+1}}^{\frac{1}{10}}$$
and  Lemma  \ref{dioestimate} that
\begin{equation*}
	2|\rho_{n_{j+1}}| \geq \gamma (1+|\mathbf{k}^{*}_{n_{j+1}}|_{\eta})^{-C_{1} |\mathbf{k}^{*}_{n_{j+1}}|_{\eta}^{\frac{1}{\eta+1}}} - \varepsilon_{n_{j+1}}^{\frac{1}{10}} \geq \mathrm{e}^{-C_{3}|\mathbf{k}_{n_{j+1}}|_{\eta}^{\frac{2}{2+\eta}} }.
\end{equation*}
  On the other hand, there is no resonance between $n_j$-th step and $n_{j+1}$-th step, and according to Proposition \ref{step}, we have $\rho_{1+n_{j}} = \rho_{n_{j+1}}$ and $|\rho_{1+n_{j}}| \leq \varepsilon_{n_{j}}^{\frac{1}{10}}$. To sum up, we obtain that
\begin{equation*}
	\frac{1}{2}\exp (-C_{3} |{\bf k}_{n_{j+1}}^{*}|_{\eta}^{\frac{2}{\eta+2}}) \leq \varepsilon_{n_{j}}^{\frac{1}{10}} \leq \exp (-\frac{1}{10}|{\bf k}^{*}_{n_{j}}|_{\eta}^{\frac{8}{8+\eta}} ),
\end{equation*}
where the second inequality uses $|{\bf k}^{*}_{n_{j}}|_{\eta}\leq N_{n_{j}}\leq |\log \varepsilon_{n_{j}}|^{1+\frac{\eta}{8}}$, which  shows that 
\begin{equation*}
	|{\bf k}^{*}_{n_{j+1}}|_{\eta} \geq |{\bf k}^{*}_{n_{j}}|_{\eta}^{1+\frac{\eta}{4+\eta}}.
\end{equation*}
Hence we finish the whole proof.
\end{proof}

\subsubsection{Reducibility of almost-periodic cocycle}

The following Corollary \ref{finite} shows that $(\alpha,A\mathrm{e}^{F})$ is reducible provided that $\rho$ belongs to at most finitely many sets $\Theta_{n}$. Let $\bar{\Theta}= \limsup_{n\rightarrow \infty}\Theta_{n}$.
\begin{corollary}\label{finite}
 If $\rho\notin \bar{\Theta}$, then there exists  $\Psi'\in C^{\omega}(2\mathbb{T}^{d}, {\rm SL}(2,\mathbb{C}))$ such that
	\begin{equation*}
		\Psi'(x+\alpha)^{-1}A{\rm e}^{F(x)}\Psi'(x) =A'.
	\end{equation*}
Indeed, let $\tilde{n}$ such that $\rho\notin \Theta_{n}$ for any $n\geq \tilde{n}$. Then $A'$ takes the precise form:
\begin{enumerate}[font=\normalfont, label={(\arabic*)}]
	\item \label{item:elli} If $\rho_{\tilde{n}}\neq 0$, then  $A'=\begin{pmatrix}
		\mathrm{e}^{\mathrm{i}\rho_{\tilde{n}}}&0\\
		0&\mathrm{e}^{-\mathrm{i}\rho_{\tilde{n}}}
	\end{pmatrix}$.

\item \label{item:para} If $\rho_{\tilde{n}}=0,$  then $A'=\begin{pmatrix}
	1&\zeta_{\tilde{n}}\\0&1
\end{pmatrix}$.
\end{enumerate}
\end{corollary}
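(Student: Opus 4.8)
The plan is a standard ``pass to the limit once the resonances stop'' argument on top of Proposition \ref{elliparabolic}. Since $\rho\notin\bar\Theta=\limsup_{n}\Theta_n$, the index set $\{n:\rho\in\Theta_n\}$ is finite, so we may fix $\tilde n$ with $\rho\notin\Theta_n$ for all $n\geq\tilde n$. Then every step with index $n\geq\tilde n$ falls into the non-resonant alternative of Proposition \ref{elliparabolic}\ref{item:nonres}: the constant part freezes, $A_n\equiv A_{\tilde n}=:\hat A$ (since there $\rho_{n+1}=\rho_n$ and $\zeta_{n+1}=\zeta_n$), the conjugation increments are $\Phi_{n+1}=\Phi_n\,\mathrm e^{Y_n}$ with $\|Y_n\|_{h_n}\leq\varepsilon_n^{1/2}$, and $\|F_{n+1}\|_{h_{n+1}}\leq\varepsilon_{n+1}$.

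First I would verify convergence. By \eqref{star} one has $\varepsilon_{n+1}=2\varepsilon_n^{3}$, so $\varepsilon_n\to0$ super-exponentially and $\sum_{n\geq\tilde n}\varepsilon_n^{1/2}<\infty$; moreover $h_n\downarrow h_\infty$ and $r_n\downarrow r_\infty$ with $h_\infty\geq h'>0$ and $r_\infty\geq r'>0$, because $\sum_{j\geq 2}j^{-2}<1$. Since $\|Y_n\|_{h_\infty}\leq\|Y_n\|_{h_n}\leq\varepsilon_n^{1/2}$ is summable, the infinite product $\prod_{n\geq\tilde n}\mathrm e^{Y_n}$ converges in $C^\omega(\mathbb T^d_{h_\infty},\mathrm{SL}(2,\mathbb C))$, whence $\Phi_n\to\Psi_0:=\Phi_{\tilde n}\prod_{n\geq\tilde n}\mathrm e^{Y_n}$ uniformly on $2\mathbb T^d_{h_\infty}$; the limit lies in $C^\omega(2\mathbb T^d_{h_\infty},\mathrm{SL}(2,\mathbb C))$ by the Weierstrass theorem and continuity of $\det$ (the $2\mathbb T^d$ is inherited from the finite prefix $\Phi_{\tilde n}$, which may contain the $2\mathbb T^d$-valued rotations $Q_{\mathbf k^*}$ from the resonant steps before $\tilde n$). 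At the same time $\|F_n\|_{h_\infty}\leq\varepsilon_n\to0$, so $\mathrm e^{F_n}\to\mathrm{id}$. Letting $n\to\infty$ in $\Phi_n(x+\alpha)^{-1}A\mathrm e^{F(x)}\Phi_n(x)=A_n\mathrm e^{F_n(x)}$ (using $\Phi_n^{-1}\to\Psi_0^{-1}$, valid since $\det\equiv1$) gives
\[
\Psi_0(x+\alpha)^{-1}A\mathrm e^{F(x)}\Psi_0(x)=\hat A .
\]

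It remains to put $\hat A=A_{\tilde n}$ into the asserted normal form by a constant conjugation; I treat the upper-triangular case, the lower-triangular one being symmetric under a fixed swap of coordinates. If $\rho_{\tilde n}\neq0$, then $\mathrm e^{\mathrm i\rho_{\tilde n}}\neq\mathrm e^{-\mathrm i\rho_{\tilde n}}$ (here $|\rho_{\tilde n}|<\pi$: it equals the initial $\rho$ if no resonance ever occurred, and otherwise, after the last resonance, $|\rho_{\tilde n}|\leq\varepsilon^{1/10}$), so exactly as in the resonant step of Proposition \ref{step} there is a constant $P\in\mathrm{SL}(2,\mathbb C)$ with $P^{-1}\hat AP=\bigl(\begin{smallmatrix}\mathrm e^{\mathrm i\rho_{\tilde n}}&0\\0&\mathrm e^{-\mathrm i\rho_{\tilde n}}\end{smallmatrix}\bigr)$; then $\Psi'=\Psi_0P$ and $A'=P^{-1}\hat AP$ work. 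If $\rho_{\tilde n}=0$, then $\hat A=\bigl(\begin{smallmatrix}1&\zeta_{\tilde n}\\0&1\end{smallmatrix}\bigr)$ already, so one takes $\Psi'=\Psi_0$ and $A'=\hat A$.

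I expect the only delicate point to be the convergence bookkeeping: ensuring that no analyticity or domain is lost in passing to the limit — i.e. that $h_\infty>0$, that the infinite product genuinely converges (which is exactly where one uses that after $\tilde n$ only the exponentially small $\mathrm e^{Y_n}$ appear, so the a priori estimate $\|\Phi_n\|_0\leq\mathrm e^{C_\eta|\log\varepsilon_n|^{2/(2+\eta)}}$ is superseded by the uniform bound $\|\Phi_n\|_0\leq\|\Phi_{\tilde n}\|_0\exp\bigl(\sum_{m\geq\tilde n}\|Y_m\|_0\bigr)$), and that the residual cocycle error $\mathrm e^{F_n}$ converges to the identity on a fixed strip. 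Granting Proposition \ref{elliparabolic}, everything else is routine.
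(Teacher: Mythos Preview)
Your proposal is correct and follows essentially the same approach as the paper's proof: freeze the constant $A_n$ at step $\tilde n$, form the convergent tail product $\prod_{n\geq\tilde n}\mathrm e^{Y_n}$ using the summability of $\varepsilon_n^{1/2}$, and then apply a final constant conjugation (diagonalization when $\rho_{\tilde n}\neq0$, or the swap $H=\bigl(\begin{smallmatrix}0&\mathrm i\\\mathrm i&0\end{smallmatrix}\bigr)$ in the lower-triangular parabolic case). Your convergence bookkeeping is a bit more explicit than the paper's, but the argument is the same.
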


\begin{proof}
	By Proposition \ref{elliparabolic}, there exist $\Phi_{\tilde{n}}$, $F_{\tilde{n}}$, $A_{\tilde{n}}$ such that
	\begin{equation*}
		\Phi_{\tilde{n}}(x+\alpha)^{-1} A{\rm e}^{F(x)} \Phi_{\tilde{n}}(x) = A_{\tilde{n}} {\rm e}^{F_{\tilde{n}}(x)}.
	\end{equation*}
 Since no resonance occurs for any $n\geq \tilde{n}$ by the definition of $\rho$, we use Proposition \ref{elliparabolic}\ref{item:nonres} iteratively to obtain $Y_{n}$ and $F_{n}$ for $n\geq \tilde{n}$ such that 
	\begin{equation*}
		\mathrm{e}^{-Y_{n}(x+\alpha)} A_{\tilde{n}}\mathrm{e}^{F_{n}(x)} \mathrm{e}^{Y_{n}(x)} = A_{\tilde{n}} \mathrm{e}^{F_{n+1}(x)},
	\end{equation*}
	with $\|Y_{n}\|_{h_{n}} \leq \varepsilon_{n}^{\frac{1}{2}}$ and $\|F_{n}\|_{h_{n}} \leq \varepsilon_{n}$. 

If $\rho_{\tilde{n}}\neq 0$, then there exists $P\in \mathcal{M}$ such that 
\begin{equation*}
	P^{-1} A_{\tilde{n}}P = \begin{pmatrix}
		\mathrm{e}^{\mathrm{i}\rho_{\tilde{n}}}& 0\\
		0&\mathrm{e}^{-\mathrm{i}\rho_{\tilde{n}}}
	\end{pmatrix}=:A'.
\end{equation*}
We let $\Psi' = \Phi_{\tilde{n}} \cdot \prod_{n=\tilde{n}}^{\infty}{\rm e}^{Y_{n}} \cdot P$. 

If  $\rho_{\tilde{n}}= 0$, then we let $\Psi'=\Phi_{\tilde{n}} \cdot \prod_{n=\tilde{n}}^{\infty}{\rm e}^{Y_{n}}$ for the case $A_{\tilde{n}}=\begin{pmatrix}
	1&\zeta_{\tilde{n}}\\0&1
\end{pmatrix}$. Otherwise we choose $H=\begin{pmatrix}
	0&\mathrm{i}\\ \mathrm{i}&0
\end{pmatrix}$ so that
\begin{equation*}
	H^{-1} \begin{pmatrix}
		1&0\\\zeta_{\tilde{n}}&1
	\end{pmatrix} H= \begin{pmatrix}
	1&\zeta_{\tilde{n}}\\0&1
\end{pmatrix}=:A',
\end{equation*}
which finishes the proof by letting $\Psi'=\Phi_{\tilde{n}} \cdot \prod_{n=\tilde{n}}^{\infty}{\rm e}^{Y_{n}} \cdot H$.
\end{proof}

\subsubsection{Growth of the cocycles:}

Corollary \ref{finite} shows that  the cocycle is reducible if $\rho\notin \bar{\Theta}$. In the following, we will show the cocycle has sublinear growth if $\rho\in \bar{\Theta}$.

\begin{corollary}\label{upper}
	If $\rho\in \bar{\Theta}$, then 
		\begin{equation*}
		\|\mathcal{A}_{j}\|_{0}\leq o(1+j).
	\end{equation*}
where $(j\alpha,\mathcal{A}_{j}(x) ):=(\alpha, A\mathrm{e}^{F(x)})^j$. 
\end{corollary}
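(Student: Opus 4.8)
The plan is to exploit the dichotomy provided by Proposition \ref{elliparabolic}: if $\rho\in\bar\Theta$, then resonances occur infinitely often, along a subsequence $n_1<n_2<\cdots$, with $\rho\in\Theta_{n_j}(\mathbf{k}^*_{n_j})$ for each $j$. First I would use part \ref{item:spe} of Proposition \ref{elliparabolic}, which says $|\mathbf{k}^*_{n_{j+1}}|_\eta\geq|\mathbf{k}^*_{n_j}|_\eta^{1+\frac{\eta}{4+\eta}}$; since also $|\mathbf{k}^*_{n_j}|_\eta\leq N_{n_j}\leq|\log\varepsilon_{n_j}|^{1+\frac\eta8}$, the resonant sites grow at least doubly-exponentially, hence the resonant steps $n_j$ are extremely sparse and $\varepsilon_{n_j}$ decays super-exponentially along $j$. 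Between two consecutive resonant times, Proposition \ref{elliparabolic}\ref{item:nonres} applies at every step, so the conjugations are $\mathrm{e}^{Y_n}$ with $\|Y_n\|_{h_n}\leq\varepsilon_n^{1/2}$, which only inflates norms by a factor $\mathrm{e}^{2\sum\varepsilon_n^{1/2}}=1+o(1)$ and contributes negligibly.

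The core estimate is the growth of $\|\Phi_n\|_0$. From the iteration, $\|\Phi_n\|_0\leq\mathrm{e}^{C_\eta|\log\varepsilon_n|^{2/(2+\eta)}}$. Writing $\mathcal A_j(x)=\Phi_n(x+j\alpha)\,A_n\mathrm{e}^{F_n(\cdot)}{}_j(x)\,\Phi_n(x)^{-1}$ for an appropriate $n=n(j)$, I would choose $n$ to be the largest index with (say) $N_{n-1}\leq j$, so that the "inner" cocycle $(\alpha,A_n\mathrm e^{F_n})$ has had its diameter controlled: $A_n$ is either elliptic with $|\zeta_n|\leq\varepsilon_{n-1}^{9/10}$ (if the last step before $n$ was resonant) giving $\|A_n^j\|\leq C(1+j)$ by a direct Jordan-form/parabolic computation, or still has $|\zeta_n|\leq|\zeta|$ but then no resonance has been detected up to scale $N_{n-1}\geq j$, forcing $\|A_n^j\|$ to stay bounded by a constant times $1+j$ as well (the $2\xi$-resonance controlling the growth of powers of a matrix in $\mathcal M$ is exactly the $\mathcal{NR}$ condition). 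The perturbation contributes $\mathrm e^{j\|F_n\|_{h_n}}\leq\mathrm e^{j\varepsilon_n}\leq\mathrm e^{N_{n-1}\varepsilon_n}$, and since $N_{n-1}\sim|\log\varepsilon_{n-1}|$ while $\varepsilon_n\leq2\varepsilon_{n-1}^3$, this is $1+o(1)$. Hence
\[
\|\mathcal A_j\|_0\ \leq\ \|\Phi_n\|_0^2\,\cdot\,C(1+j)\,\cdot\,(1+o(1))\ \leq\ \mathrm e^{2C_\eta|\log\varepsilon_n|^{2/(2+\eta)}}\cdot C(1+j).
\]
Finally I would check $\mathrm e^{2C_\eta|\log\varepsilon_n|^{2/(2+\eta)}}=o(1+j)$: by the choice of $n$, $j\geq N_{n-1}\sim|\log\varepsilon_{n-1}|$, so $|\log\varepsilon_n|\leq 3|\log\varepsilon_{n-1}|+\log 2\lesssim|\log j|\cdot(\text{const})$ — more precisely one tracks $\varepsilon_{n}\sim\varepsilon^{3^{n}}$ against $N_{n}\sim 3^n|\log\varepsilon|$ to get $|\log\varepsilon_{n(j)}|=O(\log j)$, hence $\mathrm e^{2C_\eta|\log\varepsilon_n|^{2/(2+\eta)}}=\mathrm e^{O((\log j)^{2/(2+\eta)})}=o(j)$ as $2/(2+\eta)<1$.

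I expect the main obstacle to be the bookkeeping that makes the subadditive/telescoping argument for $\|\mathcal A_j\|_0$ rigorous and uniform in $x$: one must split $\mathcal A_j$ using the correct scale $n(j)$, verify that the inner constant matrix $A_{n(j)}$ genuinely has at most linear growth of its powers (this is where the non-resonance $A_{n(j)-1}\notin\mathcal{RS}(N_{n(j)-1},\varepsilon^{1/10})$ up to the relevant scale is used, combined with $|\zeta_{n(j)}|$ small), and absorb the analytic-norm losses $h_n\downarrow h'$ without damage since we only need the $\|\cdot\|_0$ bound. The inequality $\mathrm e^{C_\eta|\log\varepsilon_{n(j)}|^{2/(2+\eta)}}=o(1+j)$ is then a soft consequence of $2/(2+\eta)<1$ together with $|\log\varepsilon_{n(j)}|=O(\log j)$, so no delicate constant-chasing is needed there.
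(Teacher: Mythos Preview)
Your scale choice and final numerology break down. You pick $n=n(j)$ as the largest index with $N_{n-1}\le j$, so $N_{n-1}\le j<N_n$. But $N_{n-1}\sim\frac{2(n+1)^2}{h-h'}|\log\varepsilon_{n-1}|$ and $|\log\varepsilon_{n-1}|\sim 3^{n-1}|\log\varepsilon|$, so $j$ is of the same order as $|\log\varepsilon_{n-1}|$, not as its logarithm. Your step ``$|\log\varepsilon_n|\le 3|\log\varepsilon_{n-1}|+\log 2\lesssim|\log j|$'' conflates $|\log\varepsilon_{n-1}|\lesssim j$ with $|\log\varepsilon_{n-1}|\lesssim\log j$; in fact $|\log\varepsilon_{n(j)}|$ is comparable to $j$ (up to a polylogarithmic factor), and hence $\|\Phi_{n(j)}\|_0^2\le \mathrm{e}^{2C_\eta|\log\varepsilon_{n(j)}|^{2/(2+\eta)}}$ behaves like $\exp\big(c\,j^{2/(2+\eta)}\big)$, which is certainly not $o(j)$. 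Even if this were repaired, your displayed bound $\|\mathcal A_j\|_0\le \|\Phi_n\|_0^2\cdot C(1+j)$ together with $\|\Phi_n\|_0^2=o(j)$ would only give $o(j^2)$, not $o(j)$.

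The missing ingredient is the smallness of $|\zeta_n|$, which you note in passing but never feed into the final estimate. The paper uses the sharper $\|A_n^{\,j}\|\le 1+j|\zeta_n|$, chooses the scale $n$ so that $j\in(\varepsilon_n^{-1/8},\varepsilon_n^{-1/2})$ (this is what makes $|\log\varepsilon_n|\sim\log j$ and keeps the perturbative exponential $\mathrm{e}^{10\varepsilon_n(j+j^2|\zeta_n|)}$ bounded), and then reduces the question to showing that the product $\|\Phi_n\|_0^2\,|\zeta_n|\to 0$. This last point is precisely where $\rho\in\bar\Theta$ enters: after each resonant step $n_k$ one has $|\zeta_{n_k+1}|\le\varepsilon_{n_k}^{9/10}$, which overwhelms $\|\Phi_{n_k+1}\|_0^2\le \mathrm{e}^{2C_\eta|\log\varepsilon_{n_k+1}|^{2/(2+\eta)}}$, and between resonances $\|\Phi_n\|_0$ changes only by the harmless factors $\mathrm{e}^{Y_m}$. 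Without tracking $\|\Phi_n\|_0^2|\zeta_n|$ (rather than $\|\Phi_n\|_0^2$ alone), the sublinear bound cannot be reached.
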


\begin{proof}
To control the growth of the cocycles, we need the following
\begin{lemma}[\cite{AFK, ZW}]\label{afk}
We have that
\begin{equation*}
	M_l(\id+y_l)\cdots M_0(\id+y_0)=M^{(l)}(\id+y^{(l)}),
\end{equation*}
where $M^{(l)}=M_l\cdots M_0$ and
\begin{equation*}
	\| y^{(l)}\| \leq \mathrm{e}^{\sum_{k=0}^{l}\| M^{(k)}\|^2\| y_k\|}-1.
\end{equation*}
\end{lemma}

By Proposition \ref{elliparabolic}, $(\alpha, A\mathrm{e}^{F(x)})$ is almost reducible. Thus, we have 
\begin{equation*}
\mathcal{A}_{j}(x) =	\Phi_{n}(x+j\alpha) \Big(\prod_{s=j-1}^{0} A_{n}{\rm e}^{F_{n}(x+s\alpha)} \Big) \Phi_{n}(x)^{-1}.
\end{equation*}
Then by Lemma \ref{afk} and $\|A_{n}^{j}\| \leq 1+j|\zeta_{n}|$, it follows that
	\begin{equation*}
		\begin{split}
			\|\mathcal{A}_{j}\|_{0}
			&\leq \|\Phi_{n}\|_{0}^{2} \cdot \|A_{n}\| \cdot \|A_{n}^{j-1}\| \cdot{\rm e}^{\|F_{n}\|_{0} \|A_{n}\| \sum_{l=1}^{j} (1+|\zeta_{n}|(j-l))}\\
			&\leq (1+2|\zeta|)\cdot (1+j |\zeta_{n}|)\cdot\|\Phi_{n}\|_{0}^{2} \cdot{\rm e}^{10\varepsilon_{n} (j+j^{2}|\zeta_{n}|)}.
		\end{split}
	\end{equation*}
	For any $j\in \mathbb{N}$, one can construct an interval $\mathbb{I}_{n}$ such that 
	\begin{equation*}
		j \in \mathbb{I}_{n}:= (\varepsilon_{n}^{-\frac{1}{8}}, \varepsilon_{n}^{-\frac{1}{2}}).
	\end{equation*}
	Since $\mathbb{I}_{n}\cap \mathbb{I}_{n+1}\neq \varnothing$, we conclude that $\{\mathbb{I}_{n}\}_{n\in\mathbb{N}}$ cover all the $j$ tending to $\infty$, and
	\begin{equation*}
		\|\mathcal{A}_{j}\|_{0} \leq 2j(1+2|\zeta|) \cdot \|\Phi_{n}\|_{0}^{2} |\zeta_{n}| .
	\end{equation*}
Note that if $\rho\in \Theta_{n}$, by Proposition \ref{elliparabolic}\ref{item:reson} we have
\begin{equation*}
	\|\Phi_{n+1}\|_{0}^{2} \cdot |\zeta_{n+1}|  \leq \varepsilon_{n+1}^{\frac{1}{4}},
\end{equation*} 
then the result follows from the assumption.
\end{proof}

\subsection{Hyperbolic case}

Recall that
\begin{equation*}
	\mathcal{M} := \bigg\{\begin{pmatrix}
		{\rm e}^{{\rm i}\xi}&\zeta\\
		0&{\rm e}^{-{\rm i}\xi}
	\end{pmatrix}:\xi,\zeta \in \mathbb{C}\bigg\} \cup \bigg\{\begin{pmatrix}
		{\rm e}^{{\rm i}\xi}&0\\
		\zeta&{\rm e}^{-{\rm i}\xi}
	\end{pmatrix}:\xi,\zeta \in \mathbb{C} \bigg\}.
\end{equation*}
To obtain the reducibility result for hyperbolic $A\in\mathcal{M}$, first we need the following simple observation:

\begin{lemma}\label{diaghyper}
	Let $d\in \mathbb{N}^{+} \cup \{\infty\}$, $\eta>0$, $h>0$, $r\in (0,1]$, $\gamma>0$, $\tau>1$, $\alpha\in {\rm DC}^{d}_{\gamma,\tau}$. Suppose that $A\in \mathcal{M}$  with $\mathrm{Im}\xi\neq 0$ and  $\zeta=0$, $F\in \mathcal{B}_{h,r}$ with 
	\begin{equation}\label{hyass}
		\|F\|_{h} < \varepsilon< 
		\min \{ 10^{-8},  |\mathrm{Im}\xi|^{3} \}, 
	\end{equation} 
	then $(\alpha, A\mathrm{e}^{F})$ is reducible to $(\alpha,A)$.
\end{lemma}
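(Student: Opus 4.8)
The plan is to run a KAM scheme in which the \emph{whole} perturbation is removed at every step, so that the constant part is never modified and the limiting conjugation takes $(\alpha,A\mathrm{e}^{F})$ to $(\alpha,A)$ itself. Since $\zeta=0$ the matrix $A=\operatorname{diag}(\mathrm{e}^{\mathrm{i}\xi},\mathrm{e}^{-\mathrm{i}\xi})$ is diagonal; this is the crucial simplification, because the homological equation $A^{-1}Y(x+\alpha)A-Y(x)=-\mathcal{T}_{N}F(x)$ then decouples completely into its four matrix entries, with no $\zeta$-cross-terms, so that no hypothesis on the size of $\mathrm{Im}\,\xi$ is needed. Comparing Fourier coefficients, the off-diagonal entries of $Y$ carry the denominators $\mathrm{e}^{\mathrm{i}(\langle\mathbf{k},\alpha\rangle\pm2\xi)}-1$, and since
\begin{equation*}
	\bigl|\mathrm{e}^{\mathrm{i}(\langle\mathbf{k},\alpha\rangle\pm2\xi)}-1\bigr|\ \ge\ \bigl|\,\mathrm{e}^{\mp 2\mathrm{Im}\,\xi}-1\,\bigr|\ \ge\ 1-\mathrm{e}^{-2|\mathrm{Im}\,\xi|}\ \ge\ c\,\min\{|\mathrm{Im}\,\xi|,1\}
\end{equation*}
for an absolute constant $c>0$, these are bounded below uniformly in $\mathbf{k}$ and with \emph{no} loss of analyticity; the diagonal entries carry $\mathrm{e}^{\mathrm{i}\langle\mathbf{k},\alpha\rangle}-1$, which for $|\mathbf{k}|_{\eta}\le N$ are controlled by $\alpha\in{\rm DC}^{d}_{\gamma,\tau}$ and Lemma~\ref{dioestimate} exactly as in Case~1 of the proof of Proposition~\ref{step}. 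In particular $\mathrm{Im}\,\xi\neq0$ forbids any resonance, so there is never a resonant term that would have to be absorbed into the constant part.

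Concretely, I would set $A_{0}=A$, $F_{0}=F$, $\varepsilon_{0}=\varepsilon$, $h_{0}=h$, and, for a fixed $h'\in(0,h)$, $h_{n+1}=h_{n}-(h-h')/(n+2)^{2}$, $N_{n}=2|\log\varepsilon_{n}|/(h_{n}-h_{n+1})$; the cone radius $r$ may be kept fixed throughout since no rotation $Q_{\mathbf{k}^{*}}$ is ever performed. Assuming inductively $F_{n}\in\mathcal{B}_{h_{n},r}$ with $\|F_{n}\|_{h_{n}}\le\varepsilon_{n}$, I decompose $\mathcal{B}_{h_{n},r}=\mathcal{B}^{\rm nre}_{h_{n},r}(\sigma)\oplus\mathcal{B}^{\rm re}_{h_{n},r}(\sigma)$ with $\mathcal{B}^{\rm nre}_{h_{n},r}(\sigma)=\{G:G=\mathcal{T}_{N_{n}}G\}$ and $\sigma:=\tfrac{c}{2}\min\{|\mathrm{Im}\,\xi|,1\}$; the estimates above show that $L_{A}$ is invertible on $\mathcal{B}^{\rm nre}_{h_{n},r}(\sigma)$ with $\|L_{A}^{-1}\|\le\sigma^{-1}$ (division preserves Fourier support, so the solution stays in $\mathcal{B}_{h_{n},r}$). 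Because $\varepsilon_{n}\le\varepsilon<\min\{10^{-8},|\mathrm{Im}\,\xi|^{3}\}<\sigma^{2}$, Lemma~\ref{hy} produces $Y_{n}\in\mathcal{B}^{\rm nre}_{h_{n},r}(\sigma)$ with $\|Y_{n}\|_{h_{n}}\le\varepsilon_{n}^{1/2}$ and $F_{n+1}\in\mathcal{B}^{\rm re}_{h_{n},r}(\sigma)$ such that $\mathrm{e}^{-Y_{n}(x+\alpha)}A\mathrm{e}^{F_{n}(x)}\mathrm{e}^{Y_{n}(x)}=A\mathrm{e}^{F_{n+1}(x)}$ with $A$ unchanged; exactly as in Case~1 of Proposition~\ref{step}, $F_{n+1}$ consists of the Fourier tail of $F_{n}$ together with the quadratic Baker--Campbell--Hausdorff corrections, so $F_{n+1}\in\mathcal{B}_{h_{n+1},r}$ and $\|F_{n+1}\|_{h_{n+1}}\le 2\mathrm{e}^{-(h_{n}-h_{n+1})N_{n}}\|F_{n}\|_{h_{n}}+2\varepsilon_{n}^{4/3}\le\varepsilon_{n}^{5/4}=:\varepsilon_{n+1}$.

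Finally, with $\Phi_{n}:=\prod_{j=0}^{n-1}\mathrm{e}^{Y_{j}}$ one has $\Phi_{n}(x+\alpha)^{-1}A\mathrm{e}^{F(x)}\Phi_{n}(x)=A\mathrm{e}^{F_{n}(x)}$, and since $\sum_{j\ge0}\|Y_{j}\|_{h_{j}}\le\sum_{j\ge0}\varepsilon_{j}^{1/2}<\infty$ the partial products converge in $C^{\omega}(\mathbb{T}^{d}_{h'},{\rm SL}(2,\mathbb{C}))$ to some $\Psi\in C^{\omega}(2\mathbb{T}^{d},{\rm SL}(2,\mathbb{C}))$, while $\varepsilon_{n}\to0$; passing to the limit gives $\Psi(x+\alpha)^{-1}A\mathrm{e}^{F(x)}\Psi(x)=A$, i.e.\ $(\alpha,A\mathrm{e}^{F})$ is reducible to $(\alpha,A)$. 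The only point that needs care is the uniform-in-$\mathrm{Im}\,\xi$ control of the conjugation: thanks to the exact cancellation in the homological equation, the term $A^{-1}Y_{n}(x+\alpha)A$ entering the new perturbation is of size $\lesssim\|Y_{n}\|_{h_{n}}+\|F_{n}\|_{h_{n}}$ rather than $\mathrm{e}^{2|\mathrm{Im}\,\xi|}\|Y_{n}\|_{h_{n}}$ (the off-diagonal entry of $Y_{n}$ that gets amplified by $\mathrm{e}^{2|\mathrm{Im}\,\xi|}$ is itself suppressed by $\mathrm{e}^{-2|\mathrm{Im}\,\xi|}$ through its small divisor), so no spurious dependence on $|\mathrm{Im}\,\xi|$ appears and the scheme closes under the stated hypothesis $\varepsilon<\min\{10^{-8},|\mathrm{Im}\,\xi|^{3}\}$ alone. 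This last cancellation --- rather than any deep fact --- is what I expect to be the only genuinely delicate part of the argument.
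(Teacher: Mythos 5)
Your basic observation --- that $\mathrm{Im}\,\xi\neq0$ bounds the off-diagonal small divisors $|\mathrm{e}^{\mathrm{i}(\langle\mathbf{k},\alpha\rangle\pm2\xi)}-1|$ uniformly away from zero --- is correct, and it is indeed the key point. However, the iteration you set up has a concrete defect. You take $\mathcal{B}^{\rm nre}_{h_n,r}(\sigma)=\{G:G=\mathcal{T}_{N_n}G\}$, a Fourier truncation at level $N_n$, and claim $\|L_A^{-1}\|\le\sigma^{-1}$ there with the \emph{fixed} constant $\sigma=\tfrac{c}{2}\min\{|\mathrm{Im}\,\xi|,1\}$. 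But that block also contains the \emph{diagonal} entries with $0<|\mathbf{k}|_\eta\le N_n$, whose small divisors are $|\mathrm{e}^{\mathrm{i}\langle\mathbf{k},\alpha\rangle}-1|$; from $\alpha\in\mathrm{DC}^d_{\gamma,\tau}$ and Lemma~\ref{dioestimate} the only available lower bound is of order $\gamma(1+N_n)^{-C_1N_n^{1/(\eta+1)}}$, which tends to $0$ as $N_n\to\infty$. Hence for $n$ large this drops below your fixed $\sigma$ and the claimed bound on $L_A^{-1}$ is false. The natural repair is to let $\sigma_n\to0$ (say $\sigma_n=\varepsilon_n^{1/3}$), but then one must enforce $\gamma(1+N_n)^{-C_1N_n^{1/(\eta+1)}}\ge\sigma_n$ at every step, which needs a KAM-type smallness of $\varepsilon$ as in \eqref{small}, a threshold depending on $\eta,h,h',\gamma,\tau$ that is \emph{not} implied by the lemma's hypothesis $\varepsilon<\min\{10^{-8},|\mathrm{Im}\,\xi|^3\}$. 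So as written the argument has a gap, and even after repair it proves the lemma only under strictly stronger smallness.

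The paper sidesteps all of this by not truncating at all. It uses the small-divisor decomposition with $\Lambda_1=\{\mathbf{k}:|\mathrm{e}^{\mathrm{i}\langle\mathbf{k},\alpha\rangle}-1|\ge\sigma\}$ and $\Lambda_2=\{\mathbf{k}:|\mathrm{e}^{\mathrm{i}(\langle\mathbf{k},\alpha\rangle\pm2\xi)}-1|\ge\sigma\}$, putting diagonal modes with $\mathbf{k}\in\Lambda_1$ and off-diagonal modes with $\mathbf{k}\in\Lambda_2$ into $\mathcal{B}^{\rm nre}(\sigma)$, with $\sigma=\varepsilon^{1/3}$. Your uniform estimate shows precisely that $\Lambda_2=\Gamma_r$ once $2|\mathrm{Im}\,\xi|\ge\sigma$, which the hypothesis $\varepsilon<|\mathrm{Im}\,\xi|^3$ guarantees. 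Therefore the off-diagonal part of $\mathcal{B}^{\rm re}(\sigma)$ is empty, and a \emph{single} application of Lemma~\ref{hy} conjugates $(\alpha,A\mathrm{e}^F)$ to a diagonal cocycle with entries $\mathrm{e}^{\pm\mathrm{i}\xi}\mathrm{e}^{\pm f(x)}$, where $f\in\mathcal{B}_{h,r}[\mathbb{C}]$ so that $\widehat{f}_{\mathbf{0}}=0$. The scalar cohomological equation $\varphi(x+\alpha)-\varphi(x)=f$ is then solvable (with a loss of analyticity, but with no smallness condition on $f$) for any $\alpha\in\mathrm{DC}^d_{\gamma,\tau}$. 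One step, no iteration, no shrinking of $r$, and exactly the stated hypothesis.
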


\begin{proof}

Let $\sigma=\varepsilon^{\frac{1}{3}}$ and 
\begin{equation*}
	\begin{split}
		&\Lambda_{1}=\{{\bf k}\in \Gamma_{r}: |{\rm e}^{{\rm i}\langle {\bf k},\alpha \rangle}-1|\geq \sigma\},\\
		&\Lambda_{2}=\{{\bf k}\in \Gamma_{r}: |{\rm e}^{{\rm i}(\langle {\bf k},\alpha \rangle\pm 2\xi)}-1|\geq \sigma\}.
	\end{split}
\end{equation*}
Then we define the decomposition $\mathcal{B}_{h,r} = \mathcal{B}_{h,r}^{\rm nre}(\sigma) \oplus \mathcal{B}_{h,r}^{\rm re}(\sigma)$ with respect to $A$, $\sigma$, where $\mathcal{B}_{h,r}^{\rm nre}(\sigma)$ is defined to be the space of all $F\in \mathcal{B}_{h,r}$ of the form
	\begin{equation}\label{nrspace}
		F(x)=\sum_{{\bf k}\in \Lambda_{1}} \begin{pmatrix}
			a_{\bf k}&0\\0&-a_{\bf k}
		\end{pmatrix}{\rm e}^{{\rm i}\langle{\bf k},x\rangle} + \sum_{{\bf k}\in \Lambda_{2}} \begin{pmatrix}
			0&b_{\bf k}\\c_{\bf k}&0
		\end{pmatrix}{\rm e}^{{\rm i}\langle{\bf k},x\rangle},
	\end{equation}
	and $\mathcal{B}_{h,r}^{\rm re}(\sigma)$ is defined to be the space of all $F\in \mathcal{B}_{h,r}$ of the form
	\begin{equation}\label{respace}
		F(x)=\sum_{{\bf k}\in \Gamma_{r}\backslash \Lambda_{1}} \begin{pmatrix}
			a_{\bf k}&0\\0&-a_{\bf k}
		\end{pmatrix}{\rm e}^{{\rm i}\langle{\bf k},x\rangle} + \sum_{{\bf k}\in \Gamma_{r}\backslash \Lambda_{2}} \begin{pmatrix}
			0&b_{\bf k}\\c_{\bf k}&0
		\end{pmatrix}{\rm e}^{{\rm i}\langle{\bf k},x\rangle}.
	\end{equation}
	
 For any $Y\in \mathcal{B}_{h,r}^{\rm nre}(\sigma)$, we have
	\begin{equation*}
		\begin{split}
			(L_{A}Y)(x)&= \sum_{{\bf k}\in \Lambda_{1}}\begin{pmatrix}
				a_{\bf k} ({\rm e}^{ {\rm i}\langle {\bf k},\alpha\rangle}-1)&0\\0&-a_{\bf k}({\rm e}^{{\rm i}\langle {\bf k},\alpha\rangle}-1)
			\end{pmatrix} {\rm e}^{{\rm i}\langle{\bf k},x\rangle} \\
			&\quad +\sum_{{\bf k}\in \Lambda_{2}}
			\begin{pmatrix}
				0&b_{\bf k}({\rm e}^{{\rm i}\langle {\bf k},\alpha\rangle-2\xi}-1)\\c_{\bf k}({\rm e}^{{\rm i}\langle {\bf k},\alpha\rangle+2\xi}-1)&0
			\end{pmatrix}{\rm e}^{{\rm i}\langle{\bf k},x\rangle}. 
		\end{split}
	\end{equation*}
	Thus $L_{A}$ is invertible on $\mathcal{B}_{h,r}^{\rm nre}(\sigma)$ and $\|L_{A}^{-1}\|\leq \frac{1}{\sigma}$, which means the decomposition for \eqref{nrspace} and \eqref{respace} is well-defined. 
	
Just note by the assumption \eqref{hyass}, we have
$$ |{\rm e}^{{\rm i}(\langle {\bf k},\alpha \rangle\pm 2\xi)}-1|\geq  2|{\rm Im} \xi| \geq \sigma, \qquad \forall \ \mathbf{k}\in \mathbb{Z}^{d}_{*},$$
which implies $\Gamma_{r}\backslash\Lambda_{2}=\varnothing$. Thus  by Lemma \ref{hy}, there exist $Y\in  \mathcal{B}_{h,r}^{\rm nre}(\sigma)$ and $F^{\rm re}\in \mathcal{B}_{h,r}^{\rm re}(\sigma)$ such that
	\begin{equation*}
		{\rm e}^{-Y(x+\alpha)}A{\rm e}^{F(x)} {\rm e}^{Y(x)} = A{\rm e}^{F'^{\rm re}(x)}=: \begin{pmatrix}
			{\rm e}^{\mathrm{i}\xi}{\rm e}^{f(x)}&0\\
			0&{\rm e}^{-\mathrm{i}\xi}{\rm e}^{-f(x)} 
		\end{pmatrix}.
	\end{equation*}
	
Since $\alpha\in {\rm DC}^{d}_{\gamma,\tau}$, and $\widehat{f}_{\bf 0}=0$  by  $f\in \mathcal{B}_{h,r}[\mathbb{C}],$ then 
	\begin{equation*}
		\varphi(x+\alpha)-\varphi(x)=f(x), \quad f\in \mathcal{B}_{h,r}[\mathbb{C}],
	\end{equation*} 	
	always has a solution $\varphi\in C^{\omega}(\mathbb{T}^{d}_{h'},\C)$ with $h'\in (0,h)$. Let 
$\Psi= \mathrm{e}^{Y} \cdot \begin{pmatrix}
		{\rm e}^{\varphi(x)}&0\\0&{\rm e}^{-\varphi(x)}
	\end{pmatrix}\in C^{\omega}(\mathbb{T}^{d}_{h'},{\rm SL}(2,\mathbb{C}))$. It follows that
	\begin{equation*}
		\Psi(x+\alpha)^{-1}A{\rm e}^{F(x)}\Psi(x)=A.
	\end{equation*}
The proof is finished.
\end{proof}

As a consequence, we have the following:

\begin{proposition}\label{hyperbolic}
Let  $d\in \mathbb{N}^{+} \cup \{\infty\}$, $\eta>0$, $h>0$, $h'\in (0,h)$, $r\in (0,1]$,   $r'\in (0,r)$, $\gamma>0$, $\tau>1$, $\alpha\in {\rm DC}^{d}_{\gamma,\tau}$. Suppose that  $A=\begin{pmatrix}
	{\rm e}^{\mathrm{i}\xi} &\zeta\\
	0&{\rm e}^{-\mathrm{i}\xi} 
\end{pmatrix}$ with $\mathrm{Im}\xi \neq 0$ and $F\in \mathcal{B}_{h,r}$. 
	There exist $\varepsilon=\varepsilon(\eta,h,h',r,r', \gamma,\tau,|\zeta|)$ and $c=c(\eta,\gamma,\tau)$ such that if
	\begin{equation*}
		\|F\|_{h} < \varepsilon< \frac{c}{(1+|\zeta|)^{10}} \min \bigg\{{\rm e}^{-(\frac{1}{h-h'})^{\frac{10}{\eta}}}, {\rm e}^{-(\frac{1}{r-r'})^{\frac{10}{\eta}}} \bigg\},
	\end{equation*} 
then $(\alpha,A{\rm e}^{F})$ is reducible to $(\alpha, A')$,  where $A'=\begin{pmatrix}
	\mathrm{e}^{\mathrm{i}\xi'} &0\\
	0&\mathrm{e}^{-\mathrm{i}\xi'}
\end{pmatrix}$ with $\mathrm{Im}\xi' = \mathrm{Im}\xi$.
\end{proposition}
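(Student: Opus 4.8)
The strategy is to conjugate $(\alpha, A\mathrm{e}^F)$, after finitely many steps of the KAM scheme of Proposition \ref{step}, into a form covered by the diagonal hyperbolic Lemma \ref{diaghyper}. The subtle point is that $\varepsilon$ must be independent of $\xi$, whereas Lemma \ref{diaghyper} requires the error to be smaller than $|\mathrm{Im}\,\xi|^{3}$; the iteration is used precisely to shrink the error below this ($\xi$-dependent) threshold before Lemma \ref{diaghyper} is invoked, the number of steps being allowed to depend on $\xi$.

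If $|\mathrm{Im}\,\xi|\geq \tfrac12$, the claim is immediate: the constant $A$ is diagonalized by $P^{-1}AP=\tilde A:=\mathrm{diag}(\mathrm{e}^{\mathrm{i}\xi},\mathrm{e}^{-\mathrm{i}\xi})$ with $\|P\|\leq 1+C|\zeta|/|\mathrm{e}^{\mathrm{i}\xi}-\mathrm{e}^{-\mathrm{i}\xi}|\leq 1+C|\zeta|/(2\sinh\tfrac12)$, so that $\tilde F:=P^{-1}FP\in\mathcal{B}_{h,r}$ satisfies $\|\tilde F\|_{h}\leq\|P\|^{2}\varepsilon<\min\{10^{-8},|\mathrm{Im}\,\xi|^{3}\}$ as soon as $\varepsilon<c/(1+|\zeta|)^{10}$, and Lemma \ref{diaghyper} gives the conclusion with $\xi'=\xi$.

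Now assume $0<|\mathrm{Im}\,\xi|<\tfrac12$. I would iterate Proposition \ref{step} with the sequences \eqref{star} up to a step $n_{0}$ to be chosen, verifying along the way (just as in the inductive step of Proposition \ref{elliparabolic}) that the smallness \eqref{small} is maintained because $|\zeta_n|$ stays bounded. This produces $\Phi_{n}\in C^{\omega}(2\mathbb{T}^{d}_{h_{n}},\mathrm{SL}(2,\mathbb{C}))$, matrices $A_{n}\in\mathcal{M}$ (upper or lower) triangular with diagonal $(\mathrm{e}^{\mathrm{i}\xi_{n}},\mathrm{e}^{-\mathrm{i}\xi_{n}})$ and off-diagonal entry $\zeta_{n}$, and $F_{n}\in\mathcal{B}_{h_{n},r_{n}}$ with $\Phi_{n}(x+\alpha)^{-1}A\mathrm{e}^{F(x)}\Phi_{n}(x)=A_{n}\mathrm{e}^{F_{n}(x)}$ and $\|F_{n}\|_{h_{n}}\leq\varepsilon_{n}$. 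The decisive observation is that $\mathrm{Im}\,\xi_{n}\equiv\mathrm{Im}\,\xi$ for all $n$: a non-resonant step leaves $A_{n}$ unchanged, and a resonant step only replaces $\xi_{n}$ by $\xi_{n}-\tfrac12\langle\mathbf{k}^{*}_{n},\alpha\rangle$ with $\langle\mathbf{k}^{*}_{n},\alpha\rangle$ real. Moreover $|\mathrm{e}^{\mathrm{i}(\langle\mathbf{k},\alpha\rangle\pm2\xi_{n})}-1|\geq|\mathrm{e}^{\mp2\mathrm{Im}\,\xi}-1|\geq|\mathrm{Im}\,\xi|$ for every $\mathbf{k}$, so once $\varepsilon_{n}^{1/10}<|\mathrm{Im}\,\xi|$ we have $A_{n}\in\mathcal{NR}(N_{n},\varepsilon_{n}^{1/10})$ and all further steps are non-resonant; in particular only finitely many resonances occur.

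Since $\varepsilon_{n}\downarrow0$ super-exponentially, I would pick $n_{0}$ so large that $\|P_{n_{0}}\|^{2}\varepsilon_{n_{0}}<\min\{10^{-8},|\mathrm{Im}\,\xi|^{3}\}$, where $P_{n_{0}}$ diagonalizes $A_{n_{0}}$ as above with $\|P_{n_{0}}\|\leq 1+C|\zeta_{n_{0}}|/(2\sinh|\mathrm{Im}\,\xi|)$, a fixed finite number. Setting $\tilde A:=P_{n_{0}}^{-1}A_{n_{0}}P_{n_{0}}=\mathrm{diag}(\mathrm{e}^{\mathrm{i}\xi_{n_{0}}},\mathrm{e}^{-\mathrm{i}\xi_{n_{0}}})$ and $\tilde F:=P_{n_{0}}^{-1}F_{n_{0}}P_{n_{0}}$, which still lies in $\mathcal{B}_{h_{n_{0}},r_{n_{0}}}$ because conjugation by a constant preserves the cone support and the vanishing of the average, Lemma \ref{diaghyper} yields $\Psi$ with $\Psi(x+\alpha)^{-1}\tilde A\mathrm{e}^{\tilde F(x)}\Psi(x)=\tilde A$; then $\Psi':=\Phi_{n_{0}}\cdot P_{n_{0}}\cdot\Psi\in C^{\omega}(2\mathbb{T}^{d},\mathrm{SL}(2,\mathbb{C}))$ conjugates $(\alpha,A\mathrm{e}^{F})$ to $(\alpha,A')$ with $A':=\tilde A$ and $\mathrm{Im}\,\xi'=\mathrm{Im}\,\xi_{n_{0}}=\mathrm{Im}\,\xi$. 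I expect the only real difficulty to be this bookkeeping that decouples $\varepsilon$ from $\xi$ — justifying that a $\xi$-dependent but finite number of KAM steps is legitimate and that $\mathrm{Im}\,\xi$ is preserved exactly, not merely approximately, at each step — rather than any new analytic estimate.
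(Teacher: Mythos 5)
Your proposal is correct and follows essentially the same route as the paper: split into $|\mathrm{Im}\,\xi|\geq\tfrac12$ and $0<|\mathrm{Im}\,\xi|<\tfrac12$, iterate Proposition~\ref{step} in the second case until the error is below the $\xi$-dependent threshold of Lemma~\ref{diaghyper}, noting that $\mathrm{Im}\,\xi_n$ is preserved exactly, then diagonalize and apply Lemma~\ref{diaghyper}. Your bound $\|P_{n_0}\|\leq 1+C|\zeta_{n_0}|/(2\sinh|\mathrm{Im}\,\xi|)$ for the final diagonalizer is in fact sharper (and more transparently justified) than the $\mathrm{e}^{|\log\varepsilon_n|^{2/(2+\eta)}}$ the paper quotes, but both suffice since one only needs $\|P_{n_0}\|^2\varepsilon_{n_0}$ below the threshold, which holds for $n_0$ large.
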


\begin{proof}
We distinguish the proof into two cases:\\

\textbf{Case 1: Strong hyperbolic case.} If $|\mathrm{Im}\xi| \geq \frac{1}{2}$, then there exists $P\in  \mathcal{M}$ with $\|P-\mathrm{id}\|\leq 2|\zeta|$ such that 
	\begin{equation*}
		P^{-1} A\mathrm{e}^{F(x)} P = \begin{pmatrix}
			\mathrm{e}^{\mathrm{i}\xi} &0\\
			0&\mathrm{e}^{-\mathrm{i}\xi}
		\end{pmatrix}\mathrm{e}^{P^{-1}F(x) P} =:A'\mathrm{e}^{F'(x)},
	\end{equation*}
where $F'\in \mathcal{B}_{h,r}$ with $\|F'\|_{h} \leq \varepsilon^{\frac{9}{10}}$. By Lemma \ref{diaghyper}, $(\alpha,A'\mathrm{e}^{F'})$ is reducible to $(\alpha, A')$ with $\xi'=\xi$.\\

\textbf{Case 2: Weak hyperbolic case.}  If $|\mathrm{Im}\xi| < \frac{1}{2}$, by Proposition \ref{step} there exist $B_{n}\in C^{\omega}(2\mathbb{T}^{d}_{h_{n}}, \mathrm{SL}(2,\mathbb{C}))$, $F_{n}\in \mathcal{B}_{h_{n},r_{n}}$ and $A_{n}\in \mathcal{M}$ such that
\begin{equation*}
	B_{n}(x+\alpha)^{-1} A_{n}\mathrm{e}^{F_{n}(x)} B_{n}(x) = A_{n+1}\mathrm{e}^{F_{n+1}(x)},
\end{equation*}
where $A_{n}=\begin{pmatrix}
	\mathrm{e}^{\mathrm{i}\xi_{n}} & \zeta_{n}\\
	0&\mathrm{e}^{-\mathrm{i}\xi_{n}}
\end{pmatrix} \ \text{or} \ A_{n}=\begin{pmatrix}
	\mathrm{e}^{\mathrm{i}\xi_{n}} &0 \\
	\zeta_{n}&\mathrm{e}^{-\mathrm{i}\xi_{n}}
\end{pmatrix}$ with $\mathrm{Im}\xi_{n} =\mathrm{Im}\xi$ and $|\zeta_{n}|\leq |\zeta|$. According to the selection of \eqref{star}, the iteration is ensured by
\begin{equation*}
	\varepsilon_{n} < \frac{c}{(1+|\zeta_{n}|)^{10}} \min \bigg\{ \mathrm{e}^{-(\frac{1}{h_{n}-h_{n+1}})^{\frac{10}{\eta}}},  \mathrm{e}^{-(\frac{1}{r_{n}-r_{n+1}})^{\frac{10}{\eta}}}\bigg\}.
\end{equation*}

Let $\Phi_{0}=\mathrm{id}$ and $\Phi_{n}= \Phi_{n-1} \cdot B_{n-1}$. Then for $n\geq 0$,
\begin{equation*}
	\Phi_{n}(x+\alpha)^{-1} A\mathrm{e}^{F(x)} \Phi_{n}(x) = A_{n}\mathrm{e}^{F_{n}(x)},
\end{equation*}
with $\|F_{n}\|_{h_{n}} \leq \varepsilon_{n}$. Furthermore, there exists $P_{n}\in \mathcal{M}$ with $\|P_{n}\|\leq {\rm e}^{|\log \varepsilon_{n}|^{\frac{2}{2+\eta}}}$ such that 
\begin{equation*}
	P_{n}^{-1} A_{n}\mathrm{e}^{F_{n}(x)} P_{n} = \begin{pmatrix}
		\mathrm{e}^{\mathrm{i}\xi_{n}} &0\\
		0&\mathrm{e}^{-\mathrm{i}\xi_{n}}
	\end{pmatrix}  \mathrm{e}^{P_{n}^{-1}F_{n}(x) P_{n}} =:A_{n}'\mathrm{e}^{F_{n}'(x)},
\end{equation*}
with $\|F_{n}'\|_{h_{n}} \leq \varepsilon_{n}^{\frac{9}{10}}$. 
Since $\mathrm{Im}\xi_{n} =\mathrm{Im}\xi$, let us choose the smallest $\tilde{n}$ such that 
\begin{equation*}
\varepsilon_{\tilde{n}}^{\frac{9}{10}} \leq \min \{10^{-8}, |\mathrm{Im}\xi_{\tilde{n}}|^{3} \}.
\end{equation*}
By Lemma \ref{diaghyper}, $(\alpha, A_{\tilde{n}}'\mathrm{e}^{F_{\tilde{n}}'(x)})$ is reducible to $(\alpha, A_{\tilde{n}}')$. Denote $A' = A_{\tilde{n}}'$ and $\xi'=\xi_{\tilde{n}}$. This finishes the proof.
\end{proof}

\section{Applications in Schr\"odinger operators}

In this section, we give the applications for Schr\"odinger operators.
Let us rewrite the Schr\"odinger cocycle $S_{E,\lambda v}(x)=\begin{pmatrix}
		E-\lambda v(x)&-1\\1&0
	\end{pmatrix} = A_{E}{\rm e}^{F_{v}(x)}$, where
	\begin{equation*}
		A_{E}=\begin{pmatrix}
				E&-1\\1&0
			\end{pmatrix}, \quad F_{v}(x)=\begin{pmatrix}
				0&0\\ \lambda v(x)&0
			\end{pmatrix}.
	\end{equation*}
	Since $A_{E}\in {\rm SL}(2,\mathbb{C})$ one knows that the eigenvalues of $A_{E}$ are $  \frac{E}{2} \pm\frac{\sqrt{E^{2}-4}}{2}$.

\subsection{Proof of Theorem \ref{maintheorem1} and Theorem \ref{maintheorem3}:}

Note that one can always conjugate $A_{E}$ to the upper triangular matrix $A$  whose upper-right term is $\zeta$. To apply Proposition \ref{elliparabolic} and Proposition \ref{hyperbolic} for all $E\in \mathbb{C}$, and to obtain uniform smallness condition of $|\lambda|$, the key observation is that $|\zeta|$ is uniformly bounded with respect to $E$.

\textbf{Case 1: $E\in [-2,2]$.} The eigenvalues of $A_{E}$ are ${\rm e}^{\pm{\rm i}\rho}$ with $\rho\in \mathbb{R}$. Let $	U_{0}=\frac{1}{\sqrt{2}} \begin{pmatrix}
			{\rm e}^{{\rm i}\rho}&-1\\1&{\rm e}^{-{\rm i}{\rho}}
		\end{pmatrix}$, then we have
	\begin{equation*}
		U_{0}^{-1}A_{E}U_{0} = \begin{pmatrix}
			{\rm e}^{{\rm i}\rho}&\zeta\\0&{\rm e}^{-{\rm i}\rho}
		\end{pmatrix} =:A\in \mathcal{M},
	\end{equation*}
where $\zeta=-1-{\rm e}^{-2{\rm i}\rho}$ and thus $|\zeta|\leq 2$. Let $F= U_{0}^{-1}\cdot F_{v}\cdot U_{0}\in \mathcal{B}_{h,r}$. Then $\|F\|_{h}= \lambda \|v\|_{h}$. If we choose $\lambda_{0}$ such that
\begin{equation}\label{epsilon}
	\lambda_{0}\|v\|_{h} < \frac{c}{3^{10}} \min \bigg\{{\rm e}^{-(\frac{2}{h})^{\frac{10}{\eta}}}, {\rm e}^{-(\frac{2}{r})^{\frac{10}{\eta}}} \bigg\},
\end{equation}
then one can apply  Proposition \ref{elliparabolic} to show $(\alpha,S_{E,\lambda v})$ is almost reducible, consequently 
one can use Corollary \ref{finite} and Corollary \ref{upper} to obtain that
	\begin{equation*}
		L(E)=L(\alpha,A{\rm e}^{F})=0=\log \Big|\frac{E}{2}+\frac{\sqrt{E^{2}-4}}{2}\Big|,
	\end{equation*}
Therefore, $(\alpha, S_{E,\lambda v})\notin \mathcal{UH}$, and by Proposition \ref{equi} we have $E\in \Sigma_{\lambda v,\alpha}$.\\

\textbf{Case 2:  $E\in \mathbb{C}\backslash [-2,2]$.}  The eigenvalues of $A_{E}$ are ${\rm e}^{\pm \mathrm{i}\xi}$ with ${\rm Im}\xi<0$. We can choose $U_{0}=\frac{1}{\sqrt{|{\rm e}^{2\mathrm{i}\xi}|+1}}\begin{pmatrix}
	{\rm e}^{\mathrm{i}\xi}&-1\\
	1&{\rm e}^{\mathrm{i}\xi}
\end{pmatrix}$ and $\zeta=-1-\mathrm{e}^{-2\mathrm{i}\xi}$  so that 
\begin{equation*}
	U_{0}^{-1}A_{E}U_{0} = \begin{pmatrix}
		{\rm e}^{\mathrm{i}\xi}&\zeta\\0&{\rm e}^{-\mathrm{i}\xi}
	\end{pmatrix}=: A\in \mathcal{M}.
\end{equation*}
Let $F = U_{0}^{-1}\cdot F_{v} \cdot U_{0}$. By $|\zeta|\leq 2$, one can also choose $\lambda _{0}$ satisfying \eqref{epsilon}. It follows from Proposition \ref{hyperbolic} that
the cocycle $(\alpha,S_{E,\lambda v})$ is reducible to some hyperbolic matrix $A'\in {\rm SL}(2,\mathbb{C})$, with
\begin{equation*}
	L(E)=L(\alpha,A_{0}{\rm e}^{F_{0}})=|{\rm Im}\xi|=\log \Big|\frac{E}{2}+\frac{\sqrt{E^{2}-4}}{2}\Big|.
\end{equation*}
Therefore,  $(\alpha,S_{E,\lambda v})\in \mathcal{UH}$, and by Proposition \ref{equi} we have $E\notin \Sigma_{\lambda v,\alpha}$.\qed

\subsection{Proof of Theorem \ref{maintheorem4}:}
Suppose that \eqref{epsilon} holds, then by Proposition \ref{elliparabolic} and Corollary \ref{finite}, it is enough to show that $\rho\notin \bar{\Theta}$ if $2\rho=\langle {\bf k},\alpha\rangle \mod 2\pi$ or $\rho \in{\rm DC}_{\kappa,\tau}(\alpha)$. We show $\rho\notin \bar{\Theta}$ by contradiction. In fact, if $\rho\in \bar{\Theta}$, then we label the resonant steps $\{n_{j}\}\subset \mathbb{N}$  such that
\begin{equation}\label{infityresonant}
	\|2\rho_{n_{j}}-\langle {\bf k}^{*}_{n_{j}},\alpha \rangle\|_{\mathbb{T}} \leq \varepsilon_{n_{j}}^{\frac{1}{10}}, \quad {\bf k}^{*}_{n_{j}}\in \mathcal{T}_{N_{n_{j}}}\Gamma_{r_{n_{j}}}.
\end{equation}
Let ${\bf d}_j = \sum_{s=1}^{j} {\bf k}^{*}_{n_{s}}$ for each $j\in \mathbb{N}$. By Proposition \ref{elliparabolic}\ref{item:reson} and Lemma \ref{dioestimate}, for sufficiently large $n_{j}$, 
\begin{equation}\label{semidio}
	\begin{split}
		\|2\rho_{n_{j}}-\langle {\bf k}^{*}_{n_{j}},\alpha \rangle\|_{\mathbb{T}} &= \|2\rho -\langle {\bf d}_j,\alpha \rangle\|_{\mathbb{T}}\\
		& \geq \min\{\kappa,\gamma\} (1+|2{\bf d}_j|_{\eta})^{-C_{1}|2{\bf d}_j|_{\eta}^{\frac{1}{\eta+1}}}.
	\end{split}	
\end{equation}
By Proposition \ref{elliparabolic}\ref{item:spe},
\begin{equation*}
 |{\bf d}_j|_{\eta}  \leq ( 1+ 2|\log \varepsilon_{n_{j-1}}|^{-\frac{\eta}{4+\eta}} )|{\bf k}^{*}_{n_{j}}|_{\eta} \leq 2|\mathbf{k}^{*}_{n_{j}}|_{\eta}.
\end{equation*}
Combine the above inequality with \eqref{semidio}, one has
\begin{equation*}
		\|2\rho_{n_{j}}-\langle {\bf k}^{*}_{n_{j}},\alpha \rangle\|_{\mathbb{T}}  \geq C_{4}(\eta,\kappa,\gamma,h,h') {\rm e}^{-(2|\log \varepsilon_{n_{j}}|)^{\frac{2}{\eta+2}}},
\end{equation*}
which contradicts to \eqref{infityresonant}. Let us choose $\tilde{n}$ such that $\rho\notin \Theta_{\tilde{n}}$ for any $n\geq \tilde{n}$. 

If $\rho\in \mathrm{DC}(\alpha)$, we have $\rho_{\tilde{n}}\neq 0$ and by Corollary \ref{finite}\ref{item:elli},
\begin{equation*}
	\Psi'(x+\alpha)^{-1} A\mathrm{e}^{F(x)}\Psi'(x) = \begin{pmatrix}
		\mathrm{e}^{\mathrm{i}\rho_{\tilde{n}}}&0\\
		0&\mathrm{e}^{-\mathrm{i}\rho_{\tilde{n}}}
	\end{pmatrix}.
\end{equation*}
Let $\{n_{s}\}_{s=1}^{J^{*}}$ be the all resonant steps with $J^{*}<\infty$. Denote $\mathbf{m}=\sum_{s=1}^{J^{*}} \mathbf{k}^{*}_{n_{s}}$ and let $Q(x)= R_{\frac{\langle \mathbf{m},x\rangle}{2}}$. Then
\begin{equation*}
	Q(x+\alpha)^{-1}\begin{pmatrix}
		\mathrm{e}^{\mathrm{i}\rho_{\tilde{n}}}&0\\
		0&\mathrm{e}^{-\mathrm{i}\rho_{\tilde{n}}}
	\end{pmatrix} Q(x)=\begin{pmatrix}
		\mathrm{e}^{\mathrm{i}\rho}&0\\
		0&\mathrm{e}^{-\mathrm{i}\rho}
	\end{pmatrix}.
\end{equation*}
Let $\Psi:=\Psi'\cdot Q$. This finishes the proof.

If $2\rho=\langle {\bf k},\alpha\rangle \mod 2\pi$, the proof follows from Corollary \ref{finite}\ref{item:para} if $\rho_{\tilde{n}}=0$. If $\rho_{\tilde{n}}\neq0$, by using Corollary \ref{finite}\ref{item:elli}, 
\begin{equation*}
	\Psi'(x+\alpha)^{-1}A{\rm e}^{F(x)}\Psi'(x) = \begin{pmatrix}
		\mathrm{e}^{\mathrm{i}\rho_{\tilde{n}}}&0\\
		0&\mathrm{e}^{-\mathrm{i}\rho_{\tilde{n}}}
	\end{pmatrix}.
\end{equation*}
Choose $\mathbf{m}\in \mathbb{Z}^{d}_{*}$ such that $2\rho_{\tilde{n}}= \langle \mathbf{m},\alpha\rangle \mod 2\pi$ and let $Q(x)= R_{\frac{\langle \mathbf{m},x\rangle}{2}}$. Then
\begin{equation*}
	Q(x+\alpha)^{-1}\begin{pmatrix}
		\mathrm{e}^{\mathrm{i}\rho_{\tilde{n}}}&0\\
		0&\mathrm{e}^{-\mathrm{i}\rho_{\tilde{n}}}
	\end{pmatrix} Q(x)=\begin{pmatrix}
		1&0\\
		0&1
	\end{pmatrix}.
\end{equation*}
Let $\Psi :=\Psi' \cdot Q$. This finishes the proof. \qed

\subsection{Proof of Theorem \ref{maintheorem5}: }
 Let $E=2\cos 2\rho$ with $-2\rho =\langle \mathbf{m},\alpha \rangle \mod 2\pi$. Choose $|\lambda|$ sufficiently small such that
\begin{equation}\label{lambdasmall}
	|\lambda|<\min\{\lambda_{0}, \mathrm{e}^{-2h|\mathbf{m}|_{\eta}}\}.
\end{equation}  
To prove Theorem \ref{maintheorem5}, we only need to show resonant case only appears once in the setting of Proposition \ref{elliparabolic}. For simplicity, we denote $\varepsilon:= \|\lambda v\|_{h}$, and thus by \eqref{lambdasmall},
\begin{equation}\label{m}
	\varepsilon<\mathrm{e}^{-h|\mathbf{m}|_{\eta}}.
\end{equation} 
We sketch the proof into the following five steps:

{\bf Step 1.} Let $U_{0}=\frac{1}{\sqrt{2}} \begin{pmatrix}
	{\rm e}^{{\rm i}\rho}&-1\\1&{\rm e}^{-{\rm i}{\rho}}
\end{pmatrix}$. Then
\begin{equation*}
	U_{0}^{-1}A_{E}\mathrm{e}^{F_{v}(x)}U_{0} = A\mathrm{e}^{F(x)},
\end{equation*}
where $A=\begin{pmatrix}
	{\rm e}^{{\rm i}\rho}&\zeta\\0&{\rm e}^{-{\rm i}\rho}
\end{pmatrix}$ with $\zeta_{0}=-1-\mathrm{e}^{\mathrm{i}\rho}$ and 
$F = U_{0}^{-1} F_{v}  U_{0}$. We have
\begin{equation*}
	\widehat{F}_{\mathbf{m}}=U_{0}^{-1} \begin{pmatrix}
		0&0\\
		\lambda&0
	\end{pmatrix}U_{0} =\frac{\lambda}{2} \begin{pmatrix}
		\mathrm{e}^{\mathrm{i}\rho} &-1\\
		\mathrm{e}^{2\mathrm{i}\rho}&-\mathrm{e}^{\mathrm{i}\rho} 
	\end{pmatrix},
\end{equation*}
and thus $F^{2,1}(x) = \frac{\lambda}{2}  \mathrm{e}^{2\mathrm{i}\rho} \mathrm{e}^{\mathrm{i} \langle \mathbf{m},x\rangle}$ with $|(\widehat{F}_{\mathbf{m}})^{2,1}|=\frac{\varepsilon}{2}\mathrm{e}^{-h|\mathbf{m}|_{\eta}}$.

{\bf Step 2.} Let $P=\begin{pmatrix}
	1&\frac{\zeta}{\mathrm{e}^{-\mathrm{i}\rho}-\mathrm{e}^{\mathrm{i}\rho}}\\ 0 &1
\end{pmatrix}$. By \eqref{m}, we have $|\mathbf{m}|_{\eta} \leq N_{0}=\frac{2|\log {\varepsilon}|}{h-h_{1}}$ and thus $\rho\in \Theta_{0}(\mathbf{m})$. So by Lemma \ref{dioestimate}, we have $\|P\|\leq {\rm e}^{|\log \varepsilon|^{\frac{2}{2+\eta}}}$.  Direct calculation shows  that
\begin{equation*}
	P^{-1} A\mathrm{e}^{F(x)} P= A' \mathrm{e}^{F'(x)},
\end{equation*}
where $A'=\begin{pmatrix}
	\mathrm{e}^{\mathrm{i}\rho}&0\\
	0&\mathrm{e}^{-\mathrm{i}\rho}
\end{pmatrix}$ and $F' (x)= \begin{pmatrix}
	g_{1}(x)&g_{2}(x)\\F^{2,1}(x)&-g_{1}(x)
\end{pmatrix}$
with $\|g_{1}\|_{h}, \|g_{2}\|_{h}\leq\|F'\|_{h} \leq  \varepsilon^{\frac{9}{10}}=:\varepsilon'$.

{\bf Step 3.} Let $N'=C_{2} |\log\varepsilon|^{1+\frac{\eta}{2}}-N$ and $\sigma =\varepsilon'^{\frac{1}{3}}$. One can check that the space decomposition with respect to $A',\sigma$ is well-defined:
\begin{equation*}
	\begin{split}
		&\mathcal{B}^{\rm nre}_{h,r}(\sigma)= \bigg\{F(x)=\mathcal{T}_{N'} F(x)
		- \begin{pmatrix}
			0&0	\\c_{\bf m}&0
		\end{pmatrix}{\rm e}^{{\rm i}\langle m, x\rangle}  \bigg\},\\
		&\mathcal{B}^{\rm re}_{h,r}(\sigma)= \bigg\{ F(x)=\mathcal{R}_{N'} F(x)
		+\begin{pmatrix}
			0&0\\c_{\bf m}	&0
		\end{pmatrix}{\rm e}^{{\rm i}\langle m, x\rangle} \bigg\}.
	\end{split}
\end{equation*}
By Lemma \ref{hy}, there exist $Y\in\mathcal{B}_{h,r}^\mathrm{nre}(\sigma)$ and $F^\mathrm{re}\in \mathcal{B}_{h,r}^\mathrm{re}(\sigma)$ such that 
$$\mathrm{e}^{-Y(x+\alpha)} A'\mathrm{e}^{F'(x)} \mathrm{e}^{Y(x)} =A'\mathrm{e}^{F^\mathrm{re}(x)}$$ 
with $\|F^\mathrm{re}-\mathbb{P}_\mathrm{re} F'\|_{h} \leq 2\varepsilon'^{\frac{4}{3}}\leq  2\varepsilon^{\frac{6}{5}}$.

{\bf Step 4.} Let $Q(x):= R_{\frac{\langle \mathbf{m},x\rangle}{2}}$. By Lemma \ref{keep}, there exists $F_{1}\in \mathcal{B}_{h_{1},r_{1}}$ such that
\begin{equation*}
	Q(x+\alpha)^{-1} A' \mathrm{e}^{F^\mathrm{re}(x)} Q(x)= \begin{pmatrix}
		1&0\\ c_{\bf m}&1
	\end{pmatrix} \mathrm{e}^{F_{1}(x)}=:A_{1}\mathrm{e}^{F_{1}(x)}, 
\end{equation*}
where $c_{\bf m}=(\widehat{F^\mathrm{re}_\mathbf{m}})^{2,1}$ and $\|F_{1}\|_{h_{1}}\leq \varepsilon^{100}$. Hence, 
\begin{equation*}
		|c_{\bf m}-(\widehat{F}_{\mathbf{m}})^{2,1}| = |(\widehat{F^\mathrm{re}_\mathbf{m}})^{2,1}-(\mathbb{P}_\mathrm{re}\widehat{F'_\mathbf{m}})^{2,1}| \leq \|F^\mathrm{re}-\mathbb{P}_\mathrm{re}F'\|_{h} \mathrm{e}^{-h|\mathbf{m}|_{\eta}} \leq 2\varepsilon^{\frac{6}{5}}\mathrm{e}^{-h|\mathbf{m}|_{\eta}},
\end{equation*}
which shows that $|c_{\bf m}|\geq |(\widehat{F}_{\mathbf{m}})^{2,1}|- 2\varepsilon^{\frac{6}{5}}\mathrm{e}^{-h|\mathbf{m}|_{\eta}}>0$.

{\bf Step 5.}
We claim that $\rho \notin \Theta_{n}$ for any $n\geq 1$. In fact, by $\alpha\in \mathrm{DC}^{d}_{\gamma,\tau}$, one can apply inductively Proposition \ref{elliparabolic}\ref{item:nonres} to show that for $n\geq1$ and $\mathbf{k}\in \mathcal{T}_{N_{n}} \Gamma_{r_{n}}$,
\begin{equation*}
	\rho_{n}=0, \ \text{and} \ \|2\rho_{n}-\langle \mathbf{k},\alpha\rangle\|_{\mathbb{T}} =\|\langle \mathbf{k},\alpha\rangle\|_{\mathbb{T}}\geq {\rm e}^{|\log \varepsilon_{n}|^{\frac{2}{2+\eta}}}.
\end{equation*}
Then there exist $Y_{n}\in \mathcal{B}_{h_{n},r_{n}}$ and $F_{n}\in \mathcal{B}_{h_{n},r_{n}}$ such that for any $n\geq1$,
\begin{equation*}
	\mathrm{e}^{-Y_{n}(x+\alpha)}A_{1}\mathrm{e}^{F_{n}(x)}\mathrm{e}^{Y_{n}(x)}=A_{1}\mathrm{e}^{F_{n+1}(x)},
\end{equation*}
with $\|Y_{n}\|_{h_{n}} \leq \varepsilon_{n}^{\frac{1}{2}}$ and $\|F_{n}\|_{h_{n}}\leq \varepsilon_{n}$. Finally we choose $H=\begin{pmatrix}
	0&\mathrm{i}\\ \mathrm{i}&0
\end{pmatrix}$, then 
\begin{equation*}
	H^{-1}A_{1} H=\begin{pmatrix}
		1&\zeta\\0&1
	\end{pmatrix} \quad \text{with} \quad \zeta=c_{\bf m}.
\end{equation*}
Let $\Psi =U_{0} \cdot P \cdot \mathrm{e}^{Y}\cdot Q \cdot \prod_{n=1}^{\infty} \mathrm{e}^{Y_{n}}\cdot H$. The proof is finished by $c_\mathbf{m} \neq0$.\qed

\section{One-frequency examples}

In this section,  by Avila's global theory of one-frequency analytic $\mathrm{SL}(2,\mathbb{C})$ cocycles \cite{Av0}, we determine the spectrum of two examples of one-frequency non-self-adjoint Schr\"odinger operators.

\subsection{Proof of  Theorem \ref{maintheorem3-1}:}

To calculate the Lyapunov exponent of  $(\alpha, S)$ with $S(x)=\begin{pmatrix}
	E-\lambda \mathrm{e}^{\mathrm{i}x} &-1\\1&0
\end{pmatrix}$, let us complexify the phase
\begin{equation*}
	S_{\epsilon}(x):=S(x+\mathrm{i}\epsilon) =\begin{pmatrix}
		E-\lambda \mathrm{e}^{\mathrm{i}(x+\mathrm{i}\epsilon)} &-1\\1&0
	\end{pmatrix}.
\end{equation*}
Denote by $L(E,\epsilon):=L(\alpha,S_{\epsilon})$ the Lyapunov exponent of $(\alpha, S_{\epsilon})$ and by $\omega(E,\epsilon):= \omega(\alpha, S_{\epsilon})$ the acceleration of that.  For sufficiently large $\epsilon>0$,
\begin{equation*}
	S_{\epsilon}(x)= \begin{pmatrix}
		E&-1\\1&0
	\end{pmatrix}+o(1),
\end{equation*}
then by the continuity of Lyapunov exponent \cite{BJ,JK},
\begin{equation*}
	L(E,\epsilon) =\log \Big|\frac{E}{2}+\frac{\sqrt{E^{2}-4}}{2}\Big|+o(1).
\end{equation*}
According to the quantization of acceleration in Theorem \ref{acce}, for $\epsilon>0$ large enough, 
\begin{equation}\label{pac}
	\omega(E,\epsilon)=0, \quad L(E,\epsilon) =\log \Big|\frac{E}{2}+\frac{\sqrt{E^{2}-4}}{2}\Big|.
\end{equation}
The similar argument works for sufficiently small $\epsilon<0$,
\begin{equation*}
	S_{\epsilon}(x)= \mathrm{e}^{\mathrm{i}x} \mathrm{e}^{-\epsilon} \begin{pmatrix}
		-\lambda &0\\0&0
	\end{pmatrix} + o(1),
\end{equation*}
and furthermore, 
\begin{equation}\label{nac}
	\omega(E,\epsilon)=-1, \quad L(E,\epsilon) = -\epsilon+ \log |\lambda|.
\end{equation}

Abbreviate the spectrum $\Sigma_{\lambda \exp,\alpha}$ by $\Sigma$. Let us calculate $L(E)$ for $E\in \Sigma$ firstly. For $|\lambda|\leq 1$, by the convexity of $L(E,\epsilon)$ with respect to $\epsilon$ and \eqref{pac},  \eqref{nac},  we always have $\omega(E,0)=0$, then by Proposition \ref{equi} and Proposition \ref{uh}, we have  $$L(E)=0.$$  
For $|\lambda|>1$, by Proposition \ref{equi} and Proposition \ref{uh},   $L(E,\epsilon)$ can not be affine, which means $\omega(E,\epsilon)\neq \omega(E,-\epsilon)$ for any small $|\epsilon|$. By  the convexity  and \eqref{pac},  \eqref{nac}, we have $\omega(E,0)=0$ and $\omega(E,\epsilon)=-1$ for any $\epsilon<0$, which implies $$L(E)=\log |\lambda|.$$ 

To show $L(E)$ for all $E\in \mathbb{C}$, we need to add the calculation for $E\notin \Sigma$. Note that, by Proposition \ref{equi} and Proposition \ref{uh}, $\omega(E,\epsilon)$ is locally constant around $\epsilon=0$.  
By the convexity of $L(E,\epsilon)$, it is easy to see that if $\omega(E,0)=0$,
\begin{equation}\label{w=0}
	L(E)=\lim_{\epsilon\rightarrow +\infty} L(E,\epsilon)=\log \Big|\frac{E}{2}+\frac{\sqrt{E^{2}-4}}{2}\Big|.
\end{equation}
For $|\lambda|\leq 1$,  we have $\omega(E,0)=0$, then the result follows directly from \eqref{w=0}. 
For $|\lambda|>1$,  for better understanding the case,  denote 
\begin{equation}\label{IO}
	\begin{split}
			\mathcal{I}&=\bigg\{E\in \mathbb{C}: \log|\lambda| > \log \Big|\frac{E}{2}+\frac{\sqrt{E^{2}-4}}{2}\Big| \bigg\},\\
			\mathcal{O}&=\bigg\{E\in \mathbb{C}: \log|\lambda| < \log \Big|\frac{E}{2}+\frac{\sqrt{E^{2}-4}}{2}\Big| \bigg\}.
		\end{split}
\end{equation}
As shown in Figure \ref{LE}, if $\omega(E,0)=-1$, by \eqref{nac} we have $$L(E)=\log |\lambda|,$$ 
which corresponds to $E\in \mathcal{I}$, i.e. the interior of the ellipse;  if $\omega(E,0)=0$, then the result follows from \eqref{w=0} again, which corresponds to $E\in \mathcal{O}$, i.e. the outside of the ellipse. 

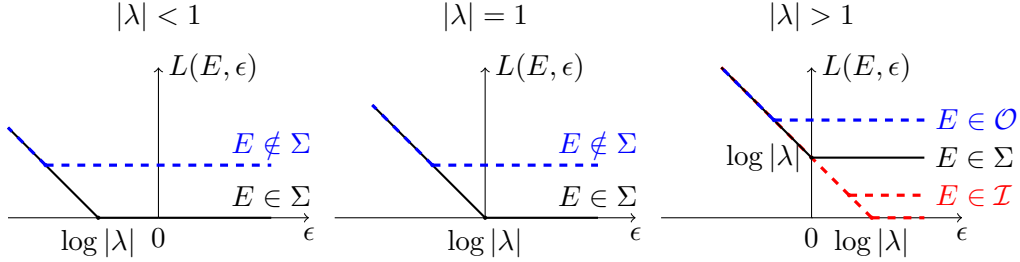
\begin{figure}[htbp]
	\centering
	\begin{tikzpicture}
		\draw[->] (-2,0) -- (2,0) node[below] {$\epsilon$};
		\draw[->] (0,0) -- (0,2) node[right] {$L(E,\epsilon)$};
		\draw[black, thick] (0,0) node {}-- (1.5,0) node[above] {$E\in \Sigma$};
		\draw[black, thick] (0,0) node {}-- (-0.8,0) node[right] {};
		\draw[black, thick] (-0.8,0) node {}-- (-2,1.2) node[right] {};
		\draw (0,0) -- (0,0) node[below] {$0$};
		\draw (-0.8,0) -- (-0.8,0) node [below]  {$\log|\lambda|$};
		\draw[blue, very thick,dashed] (-1.5,0.7) node {}-- (-2,1.2) node[right] {};
		\draw[blue, very thick,dashed] (-1.5,0.7) node {}-- (1.5,0.7) node[above] {$E\notin\Sigma$};
		\fill[black] (-0.8,0) ellipse (0.8pt and 0.8pt);
		\fill[blue] (-1.5,0.7) ellipse (0.8pt and 0.8pt);
		\draw (0,3) -- (0,3) node[below] {$|\lambda|<1$};
	\end{tikzpicture}
	\begin{tikzpicture}
		\draw[->] (-2,0) -- (2,0) node[below] {$\epsilon$};
		\draw[->] (0,0) -- (0,2) node[right] {$L(E,\epsilon)$};
		\draw[black, thick] (0,0) node {}-- (1.5,0) node[above] {$E\in \Sigma$};
		\draw[black, thick] (0,0) node {}-- (-1.5,1.5) node[right] {};
		\draw (0,0) -- (0,0) node[below] {$\log|\lambda|$};
		\draw[blue, very thick,dashed] (-0.7,0.7) node {}-- (-1.5,1.5) node[right] {};
		\draw[blue, very thick,dashed] (-0.7,0.7) node {}-- (1.5,0.7) node[above] {$E\notin \Sigma$};
		\fill[black] (0,0) ellipse (0.8pt and 0.8pt);
		\fill[blue] (-0.7,0.7) ellipse (0.8pt and 0.8pt);
		\draw (0,3) -- (0,3) node[below] {$|\lambda|=1$};
	\end{tikzpicture}
	\begin{tikzpicture}
		\draw[->] (-2,0) -- (2,0) node[below] {$\epsilon$};
		\draw[->] (0,0) -- (0,2) node[right] {$L(E,\epsilon)$};
		\draw[red, very thick,dashed] (0.5,0.3) node {}-- (-1.2,2) node[right] {};
		\draw[red, very thick,dashed] (0.5,0.3) node {}-- (1.5,0.3) node[right] {$E\in \mathcal{I}$};
		\draw[red, very thick,dashed] (0.8,0) node{}-- (0.5,0.3) node[right] {};
		\draw[red, very thick,dashed] (0.8,0) node {}-- (1.5,0) node[right] {};	
		\draw (0.8,0) -- (0.8,0) node[below] {$\log|\lambda|$};
		\draw (0,0.8) -- (0,0.8) node[left] {$\log|\lambda|$};
		\draw[black, thick] (0,0.8) node {}-- (1.5,0.8) node[right] {$E\in \Sigma$};
		\draw[black, thick] (0,0.8) node {}-- (-1.2,2) node[right] {};
		\draw (0,0) -- (0,0) node[below] {$0$};
		\draw[blue, very thick,dashed] (-0.5,1.3) node {}-- (-1.2,2) node[right] {};
		\draw[blue, very thick,dashed] (-0.5,1.3) node {}-- (1.5,1.3) node[right] {$E\in \mathcal{O}$};
		\fill[black] (0,0.8) ellipse (0.8pt and 0.8pt);
		\fill[red] (0.5,0.3) ellipse (0.8pt and 0.8pt);
		\fill[red] (0.8,0) ellipse (0.8pt and 0.8pt);
		\fill[blue] (-0.5,1.3) ellipse (0.8pt and 0.8pt);
		\draw (0,3) -- (0,3) node[below] {$|\lambda|>1$};
	\end{tikzpicture}
	\caption{Lyapunov exponent $L(E,\epsilon)$ for $E\in \mathbb{C}$.}
	\label{LE}
\end{figure}

\subsection{Proof of Theorem \ref{sarexample}:}
 We need distinguish two cases. 

{\bf Case 1: $|\lambda|\leq 1$.}  Note that $\log |\frac{E}{2}+\frac{\sqrt{E^{2}-4}}{2}|= 0$ if and only if $E\in [-2,2]$. If $E\in \Sigma$, by Theorem \ref{maintheorem3-1} we have $L(E)=0=\log |\frac{E}{2}+\frac{\sqrt{E^{2}-4}}{2}|$  which implies that $E\in [-2,2]$. On the contrary, if $E\notin \Sigma$, then $L(E)>0$ according to $(\alpha,S)\in \mathcal{UH}$, which implies that  $\log |\frac{E}{2}+\frac{\sqrt{E^{2}-4}}{2}|=L(E)>0$ by Theorem \ref{maintheorem3-1} and thus $E\notin [-2,2]$. 

{\bf Case 2: $|\lambda|>1$.}  Recall that $\mathcal{E}_{\lambda}=\{E:E=\lambda \mathrm{e}^{\mathrm{i}\theta}+\lambda^{-1}\mathrm{e}^{-\mathrm{i}\theta}, \theta\in [0,2\pi]\}$ and   $E\in \mathcal{E}_{\lambda}$ if and only if
$	\log|\lambda|= \log |\frac{E}{2}+\frac{\sqrt{E^{2}-4}}{2}|$. If $E\in \Sigma$, by Theorem \ref{maintheorem3-1} we deduce that $E\in \mathcal{E}_{\lambda}$.  If $E\notin \Sigma$, by Proposition \ref{equi} and Proposition \ref{uh}, we have $L(E)>0$ and $L(E,\epsilon)$ is affine with respect to $\epsilon$ around $\epsilon=0$. Hence we have either $E\in \mathcal{I}$ or $E\in \mathcal{O}$, see the definition in \eqref{IO} and the explanation in Figure \ref{LE}, and thus $E\notin \mathcal{E}_{\lambda}$.
\qed
\subsection{Proof of Proposition \ref{maintheorem2}:}

The proof is essentially contained in \cite{LZC}, we include the proof here just for completeness. 
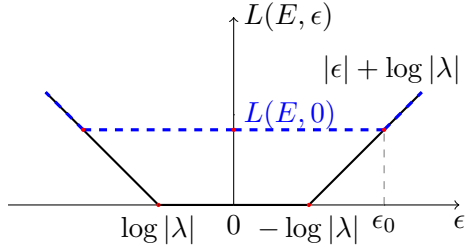
\begin{figure}[H]
	\centering
	\begin{tikzpicture}
		\draw[->] (-3,0) -- (3,0) node[below] {$\epsilon$};
		\draw[->] (0,0) -- (0,2.5) node[right] {$L(E,\epsilon)$};
		\draw[black, thick] (0,0) node {}-- (1,0) node[right] {};
		\draw[black, thick] (1,0) node {}-- (2.5,1.5) node[right] {};
		\fill[red] (1,0) ellipse (0.8pt and 0.8pt);
		\draw[black, thick] (0,0) node {}-- (-1,0) node[right] {};
		\draw[black, thick] (-1,0) node {}-- (-2.5,1.5) node[right] {};
		\fill[red] (-1,0) ellipse (0.8pt and 0.8pt);
		\draw (2.1,1.8) -- (2.1,1.8) node {$|\epsilon|+\log|\lambda|$};
		\draw (0,0) -- (0,0) node[below] {$0$};
		\draw (1,0) -- (1,0) node[below] {$-\log|\lambda|$};
		\draw (-1,0) -- (-1,0) node [below]  {$\log|\lambda|$};
		\draw[blue, very thick,dashed] (2,1) node {}-- (2.5,1.5) node[right] {};
		\draw[blue, very thick,dashed] (-2,1) node {}-- (-2.5,1.5) node[right] {};
		\draw[blue, very thick,dashed] (-2,1) node {}-- (0,1) node[right] {};
		\draw[blue, very thick,dashed] (0,1) node {}-- (2,1) node[right] {};
		\fill[red] (0,1) ellipse (0.8pt and 0.8pt);
		\draw (0,1.2) -- (0,1.2) node[blue, right] {$L(E,0)$};
		\draw[gray,dashed] (2,0) node {}-- (2,1) node[right] {};
		\draw[black] (2,0) -- (2,0) node [below] {$\epsilon_{0}$};
		\fill[red] (2,1) ellipse (0.8pt and 0.8pt);
		\fill[red] (-2,1) ellipse (0.8pt and 0.8pt);
	\end{tikzpicture}
	\caption{Lyapunov exponent $L(E,\epsilon)$ with $|\lambda|\in(0,1)$.}
	\label{acceration}
\end{figure}
Denote by $L(E,\epsilon)$ the Lyapunov exponent of $H_{v_{\epsilon},\alpha}$.  As shown in Figure \ref{acceration},  Avila \cite{Av0} proved that for any $E\in \mathbb{C}$, any $\epsilon\in\R$,
\begin{equation}\label{LE2-2}
	L(E,\epsilon)=\max \{\log |\lambda|+|\epsilon|,L(E,0)\}.
\end{equation}
In particular, 
then \begin{equation}\label{LE2}
	L(E,\epsilon)=\max \{\log |\lambda|+|\epsilon|,0\}, \quad  \forall \ E\in  \Sigma_{2\lambda \cos,\alpha}.
\end{equation}

Suppose that $E\in  \Sigma_{2\lambda \cos,\alpha}$, since $|\lambda|<1$ and $|\epsilon|<-\log |\lambda|$, it follows from \eqref{LE2} that $L(E,\epsilon)=0$ and $L(E,\epsilon+t)=0$ when $|t|\leq \log |\lambda|-|\epsilon|$, thus $E\in \Sigma_{v_\epsilon,\alpha}$ according to Proposition \ref{equi} and Proposition \ref{uh}.

Suppose that $E\notin  \Sigma_{2\lambda \cos,\alpha}$, by Proposition \ref{equi} we have  $L(E,0)>0$ and  $(\alpha, S_{E,v_{0}}) \in \mathcal{UH}$. Then it follows from \eqref{LE2-2} that $L(E,\epsilon_{0})=L(E,0)$ for $|\epsilon_{0}|\leq L(E,0)-\log|\lambda|$. Since $|\epsilon|<-\log |\lambda|$ by assumption, we have $L(E,\epsilon)=L(E,0)>0$ and $L(E,\epsilon+t)$ is affine for $|t|< L(E,0)$, it follows from Proposition \ref{equi} and Proposition \ref{uh} again that $E\notin \Sigma_{v_\epsilon,\alpha}$. We thus finish the whole proof.
\qed

\appendix

\section{Proof of Lemma \ref{hy}}
	We are going to use induction to show 
	\begin{equation*}
		{\rm e}^{-Y_{j-1}(x+\alpha)} A{\rm e}^{F_{j-1}^{\rm nre}(x) + F_{j-1}^{\rm re}(x)} {\rm e}^{Y_{j}(x)} =A{\rm e}^{F_{j}^{\rm nre}(x)+F_{j}^{\rm re}(x)}, \ j\geq 1,
	\end{equation*}
	with estimates
	\begin{equation}\label{e}
		\|Y_{j-1}\|_{h}\leq \varepsilon_{j-1}^{\frac{3}{4}},\  \|F_{j}^{\rm re}- F_{j-1}^{\rm re}\|_{h}\leq \varepsilon^{\frac{1}{3}}\varepsilon_{j-1}, \ \|F_{j}^{\rm nre}\|_{h}\leq \varepsilon_{j},
	\end{equation}
	where the sequences are defined as
	\begin{equation*}
		\varepsilon_{j}=\varepsilon_{j-1}^{\frac{3}{2}}, \ \ \varepsilon_{0}=\varepsilon, \ \ F_{0}=F, \ \ F_{0}^{\rm nre}= \mathbb{P}_{\rm nre} F, \ \  F_{0}^{\rm re}= \mathbb{P}_{\rm re} F.
	\end{equation*} 
	
	Suppose that for $j=n$, we obtain $(\alpha, A{\rm e}^{F^{\rm re}_{n}+ F^{\rm nre}_{n}})$ and \eqref{e} holds. For any $Y\in \mathcal{B}_{h,r}^{\rm nre}(\sigma)$, we define
	\begin{equation*}
		\begin{split}
			&\tilde{J}(Y):= \log {\rm e}^{-Y}{\rm e}^{F_{n}^{\rm re}} +Y-F_{n}^{\rm re}, \\
			&\tilde{K}(Y):= \log {\rm e}^{-Y}{\rm e}^{F_{n}^{\rm re}} + \log {\rm e}^{F_{n}^{\rm re}}{\rm e}^{Y} - 2F_{n}^{\rm re},
		\end{split}
	\end{equation*} 
Let $J(Y)$ (resp. $K(Y)$) be the linear part of $\tilde{J}(Y)$ (resp. $\tilde{K}(Y)$) with respect to $Y$, 
	\begin{equation*}
		J(\cdot), K(\cdot) : \quad \mathcal{B}_{h,r} \rightarrow \mathcal{B}_{h,r}.
	\end{equation*}
	Define the sequences for $j\in \mathbb{N}$ as
	\begin{equation*}
		Q_{j+1}=(-1)^{j}J(Q_{j}), \ R_{j+1}=(-1)^{j}J(R_{j}), \ Q_{0}= K(Y), \ R_{0}=F_{n}^{\rm nre}.
	\end{equation*}
	Let us consider the linear operator $I_{A}: \mathcal{B}_{h,r}^{\rm nre}(\sigma) \rightarrow \mathcal{B}_{h,r}^{\rm nre}(\sigma)$ given by
	\begin{equation*}
		I_{A}Y =L_{A}Y-\sum_{j=0}^{2^n-1}Q_{j}(Y) =  A^{-1}Y(x+\alpha)A-Y(x)- \sum_{j=0}^{2^n-1}Q_{j}(Y(x)).
	\end{equation*}
	Since $\|F^{\rm re}_{n}\|_{h}\leq 2\varepsilon$, we have $\|I_{A}Y\|\geq \frac{3}{4}\varepsilon^{\frac{1}{2}} \|Y\|_{h}$, and thus $\|I^{-1}_{A}\|$ is bounded by $\frac{4}{3} \varepsilon^{-\frac{1}{2}}$. There exists $Y_{n}$ such that $I_{A}Y_{n} = \mathbb{P}_{\rm nre} \sum_{j=0}^{2^n-1}R_{j}$, i.e.,
	\begin{equation*}
		A^{-1}Y_{n}(x+\alpha)A-Y_{n}(x) -\sum_{j=0}^{2^n-1}Q_{j}(Y_{n})= \mathbb{P}_{\rm nre} \sum_{j=0}^{2^n-1}R_{j}.
	\end{equation*}
	Moreover, $\|Y_{n}\|_{h}\leq \frac{4}{3} \varepsilon^{-\frac{1}{2}} (\varepsilon_{n}+4\varepsilon \varepsilon_{n}) \leq 2 \varepsilon^{-\frac{1}{2}} \varepsilon_{n}$. Thus, 
	\begin{equation*}
		\begin{split}
			{\rm e}^{F_{n+1}^{\rm nre}(x)+F_{n+1}^{\rm re}(x)}&= {\rm e}^{-A^{-1}Y_{n}(x+\alpha)A}{\rm e}^{F_{n}^{\rm nre}(x) + F_{n}^{\rm re}(x)} {\rm e}^{Y_{n}(x)} \\
			& = {\rm e}^{-Y_{n}(x)-\mathbb{P}_{\rm nre} \sum_{j=0}^{2^n-1}R_{j}-\sum_{j=0}^{2^n-1}Q_{j}}{\rm e}^{F_{n}^{\rm nre}(x) + F_{n}^{\rm re}(x)} {\rm e}^{Y_{n}(x)},
		\end{split}
	\end{equation*}
	Let us recall the Baker-Campbell-Hausdorff Formula,
	\begin{equation*}
		\log ({\rm e}^{X}{\rm e}^{W}{\rm e}^{Z})  = X+W+Z+\frac{1}{2}[X,W]
		+\frac{1}{2}[W,Z] +\frac{1}{2}[X,Z]	+O^{3}(X,Y,Z),
	\end{equation*}
	where $O^{3}(X,Y,Z)$ stands for the sum of terms whose Lie brackets involving three elements of $X,W,Z$. By the construction of $R_{j}, Q_{j}$ and B-C-H formula,  
	\begin{equation*}
		\begin{split}
			&F_{n+1}^{\rm re}(x)= F_{n}^{\rm re}(x) + \mathbb{P}_{\rm re} \bigg\{ -\frac{1}{2}[F_{n}^{\rm nre}, F_{n}^{\rm re}]+ [F^{\rm re}_{n},Y_{n}]+ \cdots\bigg\},\\
			&F_{n+1}^{\rm nre}(x)= \mathbb{P}_{\rm nre} \bigg\{ -\frac{1}{4}[Y_{n},[F^{\rm re}, Y_{n}]] -\frac{1}{2}[R_{2^n-1}, F^{\rm re}] -\frac{1}{2}[Q_{2^n-1}, F^{\rm re}]+ \cdots\bigg\}.
		\end{split}
	\end{equation*}
	Since $\|Q_{j}\|_{h} \leq 2\varepsilon \|Q_{j-1}\|_{h}$ and $\|R_{j}\|_{h} \leq 2\varepsilon \|R_{j-1}\|_{h}$, we get that 
	\begin{equation*}
		\begin{split}
			&\|Q_{2^n-1}\|_{h}\leq (2\varepsilon)^{2^n-1} \|Q_{0}\|_{h}\leq (2\varepsilon)^{2^n} \|Y_{n}\|_{h},\\
			&\|R_{2^n-1}\|_{h}\leq (2\varepsilon)^{2^n-1} \|F^{\rm nre}_{0}\|_{h} \leq (2\varepsilon)^{2^n}.
		\end{split}
	\end{equation*}
Thus, we deduce that 
	\begin{equation*}
		\|F^{\rm re}_{n+1}- F^{\rm re}_{n}\|_{h} \leq \varepsilon^{\frac{1}{3}}\varepsilon_{n}, \quad \|F^{\rm nre}_{n+1}\|_{h} \leq \varepsilon_{n}^{\frac{3}{2}}=\varepsilon_{n+1}.
	\end{equation*}
	
	Now we let $Y=\log (\prod_{n=0}^{\infty} {\rm e}^{Y_{n}})$ and $F^{\rm re} =\lim_{n\rightarrow \infty} F^{\rm re}_{n}$.  Since $\mathcal{B}_{h,r}^{\rm re}(\sigma)$ and $\mathcal{B}_{h,r}^{\rm nre}(\sigma)$ are closed subspace in $\mathcal{B}_{h,r}$, it follows from Proposition \ref{banach} that $Y\in \mathcal{B}_{h,r}$ and $F^{\rm re}\in \mathcal{B}_{h,r}^{\rm re}(\sigma)$. By direct calculation, we have $\|Y\|_{h}\leq \varepsilon^{\frac{1}{2}}$ and $\|F^{\rm re} - \mathbb{P}_\mathrm{re} F\|_{h}\leq 2\varepsilon^{\frac{4}{3}}$. \qed

\section*{Acknowledgements} 
This work was  partially supported by National Key R\&D Program of China (2020 YFA0713300) and Nankai Zhide Foundation.  J. You was also partially supported by NSFC grant (11871286). Q. Zhou was supported by NSFC grant (12071232), the Science Fund for Distinguished Young Scholars of Tianjin (No. 19JCJQJC61300).

\end{document}